\newif\ificmlver
\newcommand{\titlename}{Two-timescale Derivative Free Optimization for Performative Prediction with Markovian Data}
\title{\titlename}
\author{Haitong Liu, Qiang Li, Hoi-To Wai \thanks{H.T. ~Liu is from the Department of Computer Science and Engineering, Q.~Li and H.-T.~Wai are with the Department of Systems Engineering and Engineering Management, The Chinese University of Hong Kong, Hong Kong SAR of China. Emails: \texttt{antonyhtliu@link.cuhk.edu.hk, \{liqiang, htwai\}@se.cuhk.edu.hk}}}
\newlength\figH
\newlength\figW
\pgfplotsset{compat=newest}
\def\Exa@space@setup{%
  \Exa@preskip=5cm plus 2cm minus 2cm
  \Exa@postskip=\Exa@preskip
}
\title{\titlename} 
\date{\today}
\theoremstyle{plain}
\newtheorem{theorem}{Theorem}[section]
\newtheorem{lemma}{Lemma}[section]
\newtheorem{Corollary}{Corollary}[section]
\theoremstyle{definition}
\newtheorem{assumption}{Assumption}[section]
\theoremstyle{remark}
\newtheorem{remark}{Remark}[section]
\begin{document}

\maketitle

\begin{abstract}
This paper studies the performative prediction problem where a learner aims to minimize the expected loss with a decision-dependent data distribution. Such setting is motivated when outcomes can be affected by the prediction model, e.g., in strategic classification. We consider a state-dependent setting where the data distribution evolves according to an underlying controlled Markov chain. We focus on stochastic derivative free optimization (DFO) where the learner is given access to a loss function evaluation oracle with the above Markovian data. We propose a two-timescale DFO($\lambda$) algorithm that features {\sf (i)} a sample accumulation mechanism that utilizes every observed sample to estimate the overall gradient of performative risk,
and {\sf (ii)} a two-timescale diminishing step size that balances the rates of DFO updates and bias reduction. Under a general non-convex optimization setting, we show that DFO($\lambda$) requires ${\cal O}( 1 /\epsilon^3)$ samples (up to a log factor)  to attain a near-stationary solution with expected squared gradient norm less than $\epsilon > 0$. Numerical experiments verify our analysis.
\end{abstract}

\section{Introduction}
\label{sec:submission}
Consider the following stochastic optimization problem with decision-dependent data:
\beq\label{perf}
\min_{ \prm \in \RR^d } ~ {\cal L}(\prm) = \EE_{ Z \sim \Pi_{\prm} } \big[ \ell( \prm; Z ) \big].
\eeq
Notice that the decision variable $\prm$ appears in both the loss function $\ell( \prm; Z )$ and the data distribution $\Pi_{\prm}$ (denoted by $D(\prm)$ in some prior literature) supported on ${\sf Z}$. 
The overall loss function ${\cal L}(\prm)$ is known as the \emph{performative risk} which captures the distributional shift due to changes in the deployed model. This setting is motivated by the recent studies on \emph{performative prediction} \citep{perdomo2020performative}, which considers outcomes $Z$ that are supported by the deployed model $\prm$. For example, it covers strategic classification \citep{hardt2016strategic, dong2018strategic} in economics and financial practices such as with the training of loan classifier for customers who may react to the deployed model $\prm$ to maximize their gains; or in price promotion mechanism \citep{Zhang2018PriceProm} where customers react to prices with the aim of gaining a lower price; or in ride sharing business \citep{narang2022multiplayer} with customers who adjust their demand according to prices set by the platform.


Due to the effects of $\prm$ on both the loss function and distribution, the objective function ${\cal L}(\prm)$ is non-convex in general. Numerous efforts have been focused on characterizing and finding the so-called \emph{performative stable} solution which is a fixed point to the repeated risk minimization (RRM) process \citep{perdomo2020performative, mendler2020stochastic, brown2022performative, li2022state, roy2022projection, drusvyatskiy2022stochastic}. While RRM might be a natural algorithm for scenarios when the learner is agnostic to the performative effects in the dynamic data distribution, the obtained solution maybe far from being optimal or stationary to \eqref{perf}. 

On the other hand, recent works have studied \emph{performative optimal} solutions that minimizes \eqref{perf}. This is challenging due to the non-convexity of ${\cal L}(\prm)$ and more importantly, the absence of knowledge of $\Pi_{\prm}$. In fact, evaluating $\grd {\cal L}(\prm)$ or its stochastic gradient estimate would require learning the distribution $\Pi_{\prm}$ \emph{a-priori} \citep{izzo2021learn}. To design a tractable procedure, prior works have assumed structures for \eqref{perf} such as approximating $\Pi_{\prm}$ by Gaussian mixture \citep{izzo2021learn}, $\Pi_{\prm}$ depends linearly on $\prm$ \citep{narang2022multiplayer}, etc., combined with a two-phase algorithm that separately learns $\Pi_{\prm}$ and optimizes $\prm$. Other works have assumed a \emph{mixture dominance} structure \citep{Miller2021OutsideTE} on the combined effect of $\Pi_{\prm}$ and $\ell(\cdot)$ on ${\cal L}(\prm)$, which in turn implies that ${\cal L}(\prm)$ is convex. Based on this assumption, a derivative free optimization (DFO) algorithm was analyzed in \cite{ray2022decision}; also see the variants of this condition when used in different settings \cite{wood2023stochastic, zhu2023online}.
On the other hand, several works have initiated the analysis of performative prediction with more general non-convex loss \cite{dong2023approximate, mofakhami2023performative}.

\begin{figure*}
  \centering
  \begin{minipage}{\textwidth}  
    \centering
    \resizebox{\linewidth}{!}{
    \begin{tabular}{l c c c c c l l}
\toprule
\bfseries Literature & \bfseries Cvx-$\ell$ & \bfseries Cvx-${\cal L}$ & \bfseries Dec-dep & \bfseries State-dep & \bfseries Oracle &\bfseries Rate & \bfseries $\theta_{\infty}$-Type 
\\
\midrule
\cite{ghadimi2013} & \cross & \cross & \cross & \cross & $0^{th}$ & $\mathcal{O}(T^{-\frac{1}{2}})$ & Stationary
\\
\cite{Miller2021OutsideTE} & \checkmark & \checkmark & \checkmark & \cross & $1^{st}{}^\dagger$ & ${\cal O}(T^{-1})$ & Perf.~Opt. \\
\cite{ray2022decision} & \checkmark & \checkmark & \checkmark & \checkmark (Geo)& $0^{th}$ & ${\mathcal{O}}(T^{-\frac{1}{2}})$ & Perf.~Opt.
\\
\cite{mendler2020stochastic} & \checkmark & \cross & \checkmark& \cross & $1^{st}$ & ${\mathcal{O}}(T^{-1})$ & Perf.~Stable 
\\
\cite{brown2022performative} & \checkmark & \cross & \checkmark& \checkmark (Geo) & $1^{st}$ & ${\mathcal{O}}(T^{-1})$ & Perf.~Stable
\\ 
\cite{li2022state} & \checkmark & \cross & \checkmark& \checkmark (Mkv) & $1^{st}$ & ${\mathcal{O}}(T^{-1})$ & Perf.~Stable
\\
\cite{izzo2021learn} & \cross & \cross & \checkmark& \checkmark (Geo) & $1^{st}{}^\dagger$ & ${\mathcal{O}}(T^{-\frac{1}{5}} 
)$ & Stationary
\\  
\cite{roy2022projection} & \cross & \cross &  \checkmark & \checkmark (Mkv) & $1^{st}{}^\ddagger$ & $\mathcal{O}(T^{-\frac{2}{5}})$ & Stationary 
\\
\cellcolor{gray!15} {\bf This Work} & \cellcolor{gray!15}\cross & \cellcolor{gray!15}\cross & \cellcolor{gray!15}\checkmark & \cellcolor{gray!15}\checkmark (Mkv) & \cellcolor{gray!15}$0^{th}$ &\cellcolor{gray!15}${\mathcal{O}}(T^{-\frac{1}{3} })$ & \cellcolor{gray!15}Stationary
\\
\bottomrule
\end{tabular}}
\captionof{table}{{\bf Comparison to Existing Literature}.
`$\theta_{\infty}$-Type' describes the convergent points -- \emph{Perf.~stable}: Def.~2 of Mendler et al., \emph{Perf.~opt.}: solves $\min_{\theta} {\cal L}(\theta)$, \emph{Stationary}: $\| \nabla {\cal L}(\theta) \| = 0$. `Geo'/`Mkv': geometric decay/Markov. The rates sometimes omit the logarithmic terms.
${}^\dagger$uses two-stages to estimate $\Pi_\theta$ with linear distribution model. ${}^\ddagger$needs asymp.~unbiased estimate of $\nabla {\cal L}(\theta)$ [cf.~(\ref{eq:all-gradient})]. 
}
    \label{tab:nested_table}
  \end{minipage}
\end{figure*}


This paper focuses on approximating the \emph{performative optimal} solution without relying on additional condition on the distribution $\Pi_{\prm}$ and/or using a two-phase algorithm. We concentrate on stochastic DFO algorithms \citep{ghadimi2013} which do not involve first order information (i.e., gradient) about ${\cal L}(\prm)$. As an advantage, these algorithms avoid the need for estimating $\Pi_{\prm}$ nor making structural assumption on the latter. Instead, the learner only requires access to the loss function evaluation oracle $\ell( \prm ; Z )$ and receive data samples from a controlled Markov chain. Note that the latter models the \emph{stateful} and \emph{strategic} agent setting considered in \citep{ray2022decision, roy2022projection, li2022state, brown2022performative}. 
{Such setting is motivated when the actual data distribution adapts slowly to the decision model, which is to be announced by the learner during the stochastic optimization process.}

The proposed {\algoname} algorithm features {\sf (i)} two-timescale step sizes design to control the bias-variance tradeoff in the derivative-free gradient estimates, and {\sf (ii)} a sample accumulation mechanism with forgetting factor $\lambda$ that aggregates every observed samples to control the amount of error in gradient estimates. Our findings are summarized as:
\begin{itemize}[leftmargin=*, nosep]
\item Under the Markovian data setting, we show in Theorem~\ref{thm1} that the {\algoname} algorithm finds a near-stationary solution $\bar{\prm}$ with $\EE[ \| \grd {\cal L}( \bar{\prm} ) \|^2] \leq \epsilon$ using ${\cal O}(\frac{d^2}{\epsilon^3}\log 1/\epsilon)$ samples/iterations. Compared to prior works, our analysis does not require structural assumption on the distribution $\Pi_{\prm}$ or convexity condition on the performative risk  \citep{izzo2021learn, Miller2021OutsideTE, ray2022decision}.
\item Our analysis demonstrates the trade-off induced by the forgetting factor $\lambda$. We identify the desiderata for the optimal value(s) of $\lambda$. 
We show that increasing $\lambda$ allows to reduce the number of samples requited by the algorithm if the performative risk gradient has a small Lipschitz constant.
\end{itemize}
For the rest of this paper, \S\ref{sec:setup} describes the problem setup and the {\algoname} algorithm, \S\ref{sec:main} presents the main results, \S\ref{sec:proof} outlines the proofs. Finally, we provide numerical results to verify our findings in \S\ref{sec:num}.

Finally, as displayed in Table~\ref{tab:nested_table}, for stationary points convergence of (\ref{perf}), our stochastic DFO under \emph{decision dependent} (and Markovian) samples has a convergence rate of ${\cal O}( 1/T^{\frac{1}{3}})$ towards an $\epsilon$-stationary point, while other known gradient-based methods, e.g., \citep{izzo2021learn} has a convergence rate of ${\cal O}(1/T^{\frac{1}{5}})$, and \citep{roy2022projection} has a convergence rate of ${\cal O}(1/T^{\frac{2}{5}})$. We also include other relevant results in the table.

We remark that our rate is worse than works on the decision independent (i.e., non-performative) setting, e.g., ${\cal O}(1/\sqrt{T})$ in \citep{ghadimi2013}. We believe the rate degradation is due to a fundamental limit for DFO-type algorithms when tackling problems with decision-dependent sample due to the challenges in designing a low variance gradient estimator (including 2-point estimators); see \S\ref{sec:hardness}. 




{\bf Related Works}.
The idea of DFO dates back to \cite{Nemirovski1983}, and has been extensively studied thereafter \cite{flaxman2005, Agarwal2010, Nesterov2017, ghadimi2013}. Results on matching lower bound were established in \citep{Jamieson2012}. 
While a similar DFO framework is adopted in the current paper for performative prediction, our algorithm is limited to using a special design in the gradient estimator to avoid introducing unwanted biases. 

Only a few works have considered the Markovian data setting in performative prediction. \cite{brown2022performative} is the first paper to study the dynamic settings, where the response of agents to learner's deployed classifier is modeled as a function of classifier and the current distribution of the {population}; also see \citep{izzo2022learn}. 
On the other hand, \citet{li2022state, roy2022projection} model the unforgetful nature and the reliance on past experiences of \emph{single/batch} agent(s) via controlled Markov Chain.
Lastly, \citet{ray2022decision} investigated the state-dependent framework where agents' response may be driven to best response at a geometric rate. The current paper considers more relaxed conditions than \citet{ray2022decision}. 
See Appendix~\ref{ass:lit} for a detailed discussion.







{\bf Notations}: Let $\mathbb{R}^d$ be the $d$-dimensional Euclidean space equipped with inner product $\langle\cdot, \cdot\rangle$ and induced norm $\|x\|=\sqrt{\langle x, x\rangle}$.
Let ${\sf Z}$ be a (measurable) sample space, ${\cal Z}$ be the Borel $\sigma$-algebra generated by ${\sf Z}$, and $\mu$, $\nu$ are two probability measures on ${\cal Z}$. Then, we use 
$\tv{\mu}{\nu} \eqdef \sup_{A\subset {\sf Z}}\mu(A)-\nu(A)$
to denote the total variation distance between $\mu$ and $\nu$
Denote $\TT_{\prm}(\cdot, \cdot)$ as the state-dependent Markov kernel and its stationary distribution is $\Pi_{\prm}(\cdot)$. Let $\mathbb{B}^{d}$ and $\mathbb{S}^{d-1}$ be the unit ball and {its boundary} (i.e., a unit sphere) centered around the origin in $d$-dimensional Euclidean space, respectively, and correspondingly, the ball and sphere of radius $r>0$ are $r\mathbb{B}^{d}$ and $r\mathbb{S}^{d-1}$.

\section{Problem Setup and Algorithm Design}
\label{sec:setup}

In this section, we develop the {\algoname} algorithm for tackling \eqref{perf} and describe the problem setup. 

Assume that ${\cal L}(\prm)$ is differentiable, we focus on finding an \emph{$\epsilon$-stationary} solution, $\prm$, which satisfies
\beq \label{eq:stationary}
\| \grd {\cal L} (\prm ) \|^2 \leq \epsilon.
\eeq 
With the goal of reaching \eqref{eq:stationary}, there are two key challenges in our stochastic algorithm design: {\sf (i)} to estimate the gradient $\grd {\cal L}(\prm)$ without prior knowledge of $\Pi_{\prm}$, and {\sf (ii)} to handle the \emph{stateful} setting where one cannot draw samples directly from the distribution $\Pi_{\prm}$. We shall discuss how the proposed {\algoname} algorithm, which is summarized in Algorithm~\ref{alg:dfo_lambda}, tackles the above issues through utilizing two ingredients: {\sf (a)} two-timescales step sizes, and {\sf (b)} sample accumulation with the forgetting factor $\lambda \in [0,1)$.

\begin{algorithm}[tb]
   \caption{{\algoname} Algorithm}
   \label{alg:dfo_lambda}
   
\begin{algorithmic}[1]
   \STATE {\bfseries Input:} Constants $\delta_{0}, \eta_{0}, \tau_0, \alpha, \beta$, maximum epochs $T$,
forgetting factor $\lambda,$ loss function $\ell\left( \cdot; \cdot \right)$.

    \STATE {\bfseries Initialization:} Set initial $\prm_0$ and sample $Z_0$.
   \FOR{$k=0 \textbf{ to } T-1$}
    \STATE \label{line:step} $\delta_{k}\leftarrow \delta_0/(1+k)^{\beta}$, 
    $\eta_{k}\leftarrow \eta_0/(1+k)^{\alpha}$,
    
    $\tau_{k} \leftarrow \max\{1, \tau_0\log(1+k)\}$
    \STATE Let $\prm_{k}^{(1)}\leftarrow\prm_{k}$, $Z_{k}^{(0)} \leftarrow Z_{k}$, 
    
    ${\bm u}_{k}\sim \unif(\mathbb{S}^{d-1})$
   \FOR{$m=1,2,\cdots, \tau_{k}$}
   \STATE Deploy the model $\check{\prm}_{k}^{(m)} = \prm_{k}^{(m)} + \delta_{k} {\bm u}_{k}$
    \STATE Draw $Z_{k}^{(m)} \sim \mathbb{ T}_{\cprm_{k}^{(m)}}(Z_{k}^{(m-1)}, \cdot)$
    \STATE Update $\prm_{k}^{(m)}$ as
    \begin{align*}
        &\textstyle {\bm g}_{k}^{(m)}  = \frac{d}{\delta_{k}} \ell \big(\cprm_{k}^{(m)}; Z_{k}^{(m)} \big) {\bm u}_{k},        \quad \prm_{k}^{(m+1)}  = \prm_{k}^{(m)}-\eta_{k}\lambda^{\tau_{k}-m} {\bm g}_{k}^{(m)}.
    \end{align*}   
    \ENDFOR
   \STATE $Z_{k+1}\leftarrow Z_{k}^{(\tau_{k})}$, $\prm_{k+1}\leftarrow\prm_{k}^{(\tau_{k}+1)}$.
   \ENDFOR

\STATE Draw $s\sim\text{Unif}\left(\{0,1,\ldots,T\}\right)$

{\bfseries Output:} Average iterate $\prm_{s}$,

\end{algorithmic}
\end{algorithm}

\textbf{Estimating $\grd {\cal L}(\prm)$ via Two-timescales DFO.}
First notice that the gradient of ${\cal L}(\cdot)$ can be derived as
\beq \label{eq:all-gradient}
\grd {\cal L}(\prm) = \EE_{ Z \sim \Pi_{\prm} } [ \grd \ell( \prm; Z) + \ell( \prm; Z ) \grd_{\prm} \log \Pi_{\prm}( Z ) ] .
\eeq 
As a result, constructing the stochastic estimates of $\grd {\cal L}(\prm)$ typically requires knowledge of $\Pi_{\prm}( \cdot )$ which may not be known a-priori unless a separate estimation procedure is applied; see e.g., \citep{izzo2021learn}. 
To avoid the need for direct evaluations of $\grd_{\prm} \log \Pi_{\prm}( Z )$, we consider an alternative design via zero-th order optimization \citep{ghadimi2013}. The intuition comes from observing that with $\delta \to 0^+$, $\left({\cal L}( \prm + \delta {\bm u} ) - {\cal L}( \prm )\right)/\delta$ is an approximation of the directional derivative of ${\cal L}$ along ${\bm u}$. This suggests that an estimate for $\grd {\cal L}(\prm)$ can be constructed using the \emph{objective function values} of $\ell(\prm;Z)$ only. 

Inspired by the above, we aim to construct a gradient estimate by querying $\ell(\cdot)$ at randomly perturbed points.
Formally, given the current iterate $\prm\in \RR^d$ and a query radius $\delta>0$, we sample a vector ${\bm u} \in \RR^{d}$ uniformly from $\mathbb{S}^{d-1}$.

The zero-th order gradient estimator for ${\cal L}(\prm)$ is then defined as
\begin{align}\label{eq:grd}
    g_{\delta}(\prm; {\bm u}, Z) \eqdef \frac{d}{\delta} \ell(\cprm; Z) \, {\bm u},~\text{with}~\cprm \eqdef \prm+\delta {\bm u}, 
\end{align}
and $Z \sim \Pi_{\cprm}(\cdot)$.
In fact, as ${\bm u}$ is zero-mean, $g_{\delta}( \prm; {\bm u}, Z )$ is an unbiased estimator for $\grd {\cal L}_{\delta} ( \prm )$. Here, ${\cal L}_{\delta} ( \prm )$ is a smooth approximation of ${\cal L}(\prm)$ \citep{flaxman2005, Nesterov2017} defined as
\beq 
{\cal L}_{\delta} ( \prm ) = \EE_{ {\bm u} } [ {\cal L} ( \cprm  ) ] = \EE_{ {\bm u} } [ \EE_{ Z \sim \Pi_{\cprm} } [ {\ell} ( \cprm ; Z ) ]  ].
\eeq 
It is known that under mild condition [cf.~Assumption \ref{assu:Lip} to be discussed later], $\| \grd {\cal L}_{\delta} ( \prm ) - \grd {\cal L}( \prm) \| = {\cal O}(\delta)$ 
and thus \eqref{eq:grd} is an ${\cal O}(\delta)$-biased estimate for $\grd {\cal L}(\prm)$. 

We remark that the gradient estimator in \eqref{eq:grd} differs from the one used in classical works on DFO such as \citep{ghadimi2013}. The latter takes the form of $\frac{d}{\delta} ( \ell(\cprm; Z) - \ell( \prm; Z ) ) \, {\bm u}$. Under the setting of standard stochastic optimization where the sample $Z$ is drawn \emph{independently} of ${\bm u}$ and Lipschitz continuous $\ell (\cdot; Z)$, the said estimator in \citep{ghadimi2013} is shown to have constant variance while it remains ${\cal O}(\delta)$-biased. Such properties \emph{cannot} be transferred to \eqref{eq:grd} since $Z$ is drawn from a distribution dependent on ${\bm u}$ via $\cprm = \prm + \delta {\bm u}$. In this case, the two-point gradient estimator would become biased; see \S\ref{sec:hardness}.

However, we note that the variance of \eqref{eq:grd} scales as ${\cal O}(1/\delta^2)$ when $\delta \to 0$, thus the parameter $\delta$ yields a bias-variance trade off in the estimator design. 
To remedy for the increase of variance, the {\algoname} algorithm incorporates a \emph{two-timescale step size} design for generating gradient estimates ($\delta_{k}$) and updating models ($\eta_{k}$), respectively. Our design principle is such that the models are updated at a \emph{slower timescale} to adapt to the gradient estimator with ${\cal O}(1/\delta^2)$ variance. Particularly, we will set $\eta_{k+1}/\delta_{k+1} \rightarrow 0$ to handle the bias-variance trade off, e.g., by setting $\alpha > \beta$ in line~4 of Algorithm~\ref{alg:dfo_lambda}.

\textbf{Markovian Data and Sample Accumulation.} We consider a setting where the sample/data distribution observed by the {\algoname} algorithm evolves according to a \emph{controlled Markov chain (MC)}. Notice that this describes a stateful agent(s) scenario such that the deployed models ($\prm$) would require a certain amount of time to manifest their influence on the samples obtained; see \citep{li2022state, roy2022projection, brown2022performative, ray2022decision, izzo2022learn}. 

To formally describe the setting, we denote $\TT_{\prm}: {\sf Z} \times {\cal Z} \to \RR_+$ as a Markov kernel controlled by a deployed model $\prm$. For a given $\prm$, the kernel has a unique stationary distribution $\Pi_{\prm}(\cdot)$ which cannot be conveniently accessed. Under this setting, suppose that the previous state/sample is $Z$, the next sample follows the distribution $Z' \sim \TT_{\prm} ( Z, \cdot )$ which is not necessarily the same as $\Pi_{\prm}(\cdot)$.
As a consequence, the gradient estimator \eqref{eq:grd} is a biased estimator of $\grd {\cal L}_{\delta} (\prm)$.

A common strategy in settling the above issue is to allow a \emph{burn-in} phase in the algorithm as in \citep{ray2022decision}; also commonly found in MCMC methods \citep{robert1999monte}. Using the fact that $\TT_{\prm}$ admits the stationary distribution $\Pi_{\prm}$, if one can wait a sufficiently long time before applying the current sample, i.e., consider initializing with the previous sample $Z^{(0)} = Z$, the procedure\vspace{-.0cm}
\beq \label{eq:burnin}
Z^{(m)} \sim \TT_{\prm} (Z^{(m-1)}, \cdot),~m=1,\ldots,\tau,\vspace{-.2cm}
\eeq  
would yield a sample $Z^+ = Z^{(\tau)}$ that admits a distribution close to $\Pi_{\prm}$ provided that $\tau \gg 1$ is sufficiently large compared to the mixing time of $\TT_{\prm}$.

Intuitively, the procedure \eqref{eq:burnin} can be inefficient as a number of samples $Z^{(1)}, Z^{(2)}, \ldots, Z^{(\tau-1)}$ will be completely ignored at the end of each iteration.
As a remedy, the {\algoname} algorithm incorporates a sample accumulation mechanism which gathers the gradient estimates generated from possibly non-stationary samples via a forgetting factor of $\lambda \in [0,1)$. {Following \eqref{eq:grd},  $\grd {\cal L}(\prm)$ is estimated by
\beq \textstyle 
{\bm g} = \frac{d}{\delta} \sum_{m=1}^{\tau} \lambda^{\tau - m} \ell( \prm^{(m)} + \delta {\bm u} ; Z^{(m)} ) \, {\bm u}, 
\eeq
with $Z^{(m)} \sim \mathbb{ T}_{ \prm^{(m)} + \delta {\bm u} }(Z^{(m-1)}, \cdot).$

At a high level, the mechanism works by assigning large weights to samples that are close to the end of an epoch (which are less biased). Moreover, $\prm^{(m)}$ is \emph{simultaneously updated} within the epoch} to obtain an online algorithm that gradually improves the objective value of \eqref{perf}. Note that with $\lambda=0$, the {\tt DFO}(0) algorithm reduces into one that utilizes \emph{burn-in} \eqref{eq:burnin}. 
We remark that from the implementation perspective for performative prediction, Algorithm~\ref{alg:dfo_lambda} corresponds to a \emph{greedy deployment} scheme \citep{mendler2020stochastic} as the latest model {$\prm_{k}^{(m)} + \delta_{k} {\bm u}_{k}$} is deployed at every sampling step. Line 6--10 of Algorithm~\ref{alg:dfo_lambda} details the above procedure. 

Lastly, we note that recent works have analyzed stochastic algorithms that rely on a \emph{single trajectory} of samples taken from a Markov Chain, e.g., \citep{sun2018markov,  karimi2019non, doan2022finite}, that are based on stochastic gradient. 
\cite{sun2019decentralized} considered a {\tt DFO} algorithm for general optimization problems but the MC studied is not controlled by $\prm$. 
\vspace{-.2cm}

\section{Main Results}\label{sec:main}
\vspace{-.1cm} This section studies the convergence of the {\algoname} algorithm and demonstrates that the latter finds an $\epsilon$-stationary solution [cf.~\eqref{eq:stationary}] to (\ref{perf}). 

We first state the assumptions required for our analysis:
\begin{assumption}{\bf (Smoothness)}\label{assu:Lip}
${\cal L}(\prm)$ is differentiable, and there exists a constant $L > 0$ such that  
\[
    \norm{\grd {\cal L}(\prm) - \grd {\cal L}(\prm^\prime)} \leq L\norm{\prm-\prm^\prime}, ~ \forall \prm, \prm^\prime \in \RR^d.
\]
\end{assumption}

\begin{assumption}{\bf (Bounded Loss)}\label{assu:BoundLoss} There exists a constant $\G > 0$ such that
\[
     |\ell(\prm; z)| \leq \G,~\forall~ \prm\in \RR^d, ~\forall ~z\in {\sf Z}.
\]
\end{assumption}
\begin{assumption}{\bf (Lipschitz Distribution Map)}\label{assu:smooth_dist}
There exists a constant $L_1 > 0$ such that
\[
    \tv{\Pi_{\prm_1}} {\Pi_{\prm_2}}\leq L_1 \norm{\prm_1-\prm_2}\quad\forall\prm_1,\prm_2\in\mathbb{R}^d.
\]
\end{assumption}
Informally, the conditions above state that the gradient of the performative risk is Lipschitz continuous and the state-dependent distribution vary smoothly w.r.t.~$\prm$. Note that Assumption~\ref{assu:Lip} and Assumption~\ref{assu:BoundLoss} are both regularity conditions that can also be found in \citep{izzo2021learn, ray2022decision}. Assumption~\ref{assu:smooth_dist} is slightly strengthened from the Wasserstein-1 distance bound in \citep{perdomo2020performative}, and it gives tighter control for distribution shift in our Markovian data setting. Note that this particular condition can be slightly relaxed, see Lemma~\ref{lem:lip_decoupled_risk_wasserstein} in the appendix.

Next, we consider the assumptions about the controlled Markov chain induced by $\TT_{\prm}$:
\begin{assumption}{\bf (Geometric Mixing)}\label{assu:FastMixing}
Let $\{Z_{k}\}_{k\geq 0}$ denote a Markov Chain on the state space ${\sf Z}$ with transition kernel $\TT_{\prm}$ and stationary measure $\Pi_{\prm}$. 
There exist constants $\rho \in [0,1)$, $M \geq 0$, such that for any $k\geq 0$, $z\in {\sf Z}$,
\[
    \tv{\mathbb{ P}_{\prm}(Z_k\in \cdot | Z_0 =z)}{\Pi_{\prm}} \leq M  \rho^{k}.
\]
\end{assumption}
\begin{assumption}{\bf (Smoothness of Markov Kernel)}\label{assu:smooth_kernel}
There exists a constant $L_{2} \geq 0$ such that 
\begin{align*}
    \tv{\TT_{\prm_1}(z, \cdot) }{\TT_{\prm_2}(z, \cdot)} \leq L_{2} \norm{\prm_1-\prm_2},
\end{align*}
holds for any $\prm_1, \prm_2\in \RR^{d},~z\in {\sf Z}.$

\end{assumption}
Assumption \ref{assu:FastMixing} is a standard condition on the mixing time of the Markov chain induced by $\TT_{\prm}$;
Assumption \ref{assu:smooth_kernel} imposes a smoothness condition on the Markov transition kernel $\TT_{\prm}$ with respect to $\prm$. The geometric dynamically environment in \cite{ray2022decision} constitutes a special case which satisfies the above conditions, yet Assumption \ref{assu:FastMixing} is strictly more general.

Unlike \citep{ray2022decision, izzo2021learn, Miller2021OutsideTE}, we do not impose any additional assumption on $\Pi_{\prm}$ such as mixture dominance other than Assumption~\ref{assu:smooth_dist}. As a result, \eqref{perf} remains an `unstructured' non-convex stochastic optimization problem with decision-dependent distribution. 
Our main result on the convergence of the {\algoname} algorithm towards a near-stationary solution of \eqref{perf} is summarized as:
\begin{theorem}\label{thm1}
Suppose Assumptions \ref{assu:Lip}-\ref{assu:smooth_kernel} hold, step size sequence $\{\eta_{k}\}_{k\geq 1}$, and query radius sequence $\{\delta_{k}\}_{k\geq 1}$ satisfy the following conditions,
\beq \label{eq:stepsize_thm}
\begin{aligned}
&\eta_{k} = d^{-2/3} \cdot (1+k)^{-2/3}, \quad \delta_{k} = d^{1/3} \cdot (1+k)^{-1/6},
\\
&\tau_{k}=\max\{1,\frac{2}{\log 1/\max\{\rho,\lambda\}}\log (1+k)\}\quad \forall k\geq 0.
\end{aligned}
\eeq 
Then, there exists constants $t_0$, $c_5, c_6, c_7$, such that for any $T\geq t_0$, the iterates $\{\prm_k\}_{k\geq 0}$ generated by \algoname~satisfy the following inequality,
\beq \label{eq:main_result}
\begin{aligned}
     &\frac{1}{1+T}\sum_{k=0}^{T} \EE\norm{\grd {\cal L}(\prm_k)}^2 
     \leq 12\max\left\{c_5 (1-\lambda), c_6, \frac{c_7}{1-\lambda}\right\} \frac{ d^{2/3}}{(T+1)^{1/3}}.
\end{aligned}
\eeq 
\end{theorem}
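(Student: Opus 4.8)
The plan is to run a smoothness-based descent analysis on the true objective ${\cal L}$, but treating the net per-epoch displacement as a single biased gradient step and then controlling its bias and second moment. Let ${\cal F}_k$ be the $\sigma$-field generated before epoch $k$, and note that the inner loop produces $\prm_{k+1}-\prm_k=-\eta_k {\bm g}_k$ with aggregate estimator ${\bm g}_k\eqdef\sum_{m=1}^{\tau_k}\lambda^{\tau_k-m}{\bm g}_k^{(m)}$. By Assumption~\ref{assu:Lip},
\[
\EE[{\cal L}(\prm_{k+1})\mid{\cal F}_k]\le {\cal L}(\prm_k)-\eta_k\langle\grd{\cal L}(\prm_k),\EE[{\bm g}_k\mid{\cal F}_k]\rangle+\tfrac{L}{2}\eta_k^2\,\EE[\|{\bm g}_k\|^2\mid{\cal F}_k].
\]
Introducing the total weight $S_k\eqdef\sum_{m=1}^{\tau_k}\lambda^{\tau_k-m}=\tfrac{1-\lambda^{\tau_k}}{1-\lambda}\in[1,\tfrac{1}{1-\lambda}]$, I would write $\EE[{\bm g}_k\mid{\cal F}_k]=S_k\grd{\cal L}(\prm_k)+b_k$ and apply Young's inequality to the inner product, so it splits into a genuine descent term $-\tfrac{\eta_k S_k}{2}\|\grd{\cal L}(\prm_k)\|^2$ and a residual $\tfrac{\eta_k}{2S_k}\|b_k\|^2$.

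The core of the work is bounding the bias $b_k$, which I would decompose into three pieces using the standard spherical-smoothing identity relating $\grd{\cal L}_{\delta_k}$ to the sphere-sampled estimator \eqref{eq:grd}: (i) the DFO smoothing error $S_k(\grd{\cal L}_{\delta_k}(\prm_k)-\grd{\cal L}(\prm_k))={\cal O}(S_k\delta_k)$, controlled by Assumption~\ref{assu:Lip}; (ii) the within-epoch parameter drift $\sum_m\lambda^{\tau_k-m}(\grd{\cal L}_{\delta_k}(\prm_k^{(m)})-\grd{\cal L}_{\delta_k}(\prm_k))$, bounded via $\|\prm_k^{(m)}-\prm_k\|\le\eta_k S_k\tfrac{d\G}{\delta_k}$ (Assumption~\ref{assu:BoundLoss}) and $L$-smoothness; and (iii) the Markovian non-stationarity, stemming from the fact that each $Z_k^{(m)}$ is drawn from the moving kernel $\TT_{\cprm_k^{(m)}}$ rather than from its stationary law $\Pi_{\cprm_k^{(m)}}$. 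Piece (ii) is precisely where the two-timescale requirement $\eta_k/\delta_k\to0$ enters: it forces the drift to decay strictly faster than the smoothing error, so the inner-loop movement does not corrupt the aggregate gradient direction.

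I expect piece (iii) to be the main obstacle. Because the transition kernel $\TT_{\cprm_k^{(m)}}$ varies at every inner step—both through the updating $\prm_k^{(m)}$ and through the shared perturbation $\delta_k{\bm u}_k$—the geometric mixing bound of Assumption~\ref{assu:FastMixing} cannot be invoked for a single fixed kernel. I would instead construct a telescoping/perturbation argument that couples Assumption~\ref{assu:FastMixing} with the kernel-Lipschitz bound of Assumption~\ref{assu:smooth_kernel} (and the distribution-Lipschitz bound, Assumption~\ref{assu:smooth_dist}), showing that the total-variation gap $\tv{\text{law}(Z_k^{(m)})}{\Pi_{\cprm_k^{(m)}}}$ is at most a geometric mixing term plus an accumulated kernel-drift term. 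After weighting by $\lambda^{\tau_k-m}$ and summing, the mixing part yields the convolution $\sum_{m=1}^{\tau_k}\lambda^{\tau_k-m}\rho^m={\cal O}(\max\{\rho,\lambda\}^{\tau_k})$, which explains the choice $\tau_k=\tfrac{2}{\log(1/\max\{\rho,\lambda\})}\log(1+k)$ in \eqref{eq:stepsize_thm}: it forces $\max\{\rho,\lambda\}^{\tau_k}=(1+k)^{-2}$, so that even after multiplication by the estimator scale $d/\delta_k$ this contribution is summably small.

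Finally, I would bound the second moment crudely by $\EE[\|{\bm g}_k\|^2\mid{\cal F}_k]\le(d\G S_k/\delta_k)^2$ (a sharper martingale-style estimate replacing $S_k^2$ by ${\cal O}(1/(1-\lambda^2))$ improves only the $\lambda$-dependence), rearrange into
\[
\tfrac{\eta_k S_k}{2}\|\grd{\cal L}(\prm_k)\|^2\le{\cal L}(\prm_k)-\EE[{\cal L}(\prm_{k+1})\mid{\cal F}_k]+\tfrac{\eta_k}{2S_k}\|b_k\|^2+\tfrac{L}{2}\eta_k^2\,\EE[\|{\bm g}_k\|^2\mid{\cal F}_k],
\]
take total expectation, and telescope over $k=0,\dots,T$. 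Substituting $\eta_k\sim(1+k)^{-2/3}$ and $\delta_k\sim(1+k)^{-1/6}$ makes both the DFO-bias sum $\sum_k\eta_k S_k\delta_k^2$ and the variance sum $\sum_k\eta_k^2(d/\delta_k)^2 S_k^2$ scale like $\sum_k(1+k)^{-1}$, i.e.\ exactly the summability boundary that produces the logarithmic factor, while the cumulative descent weight $\sum_k\eta_k S_k\sim(1-\lambda)^{-1}(T+1)^{1/3}$ sits in the denominator. Normalizing the telescoped inequality by $\sum_{k=0}^{T}\eta_k S_k$ (equivalently, through the output sampling rule) then gives the overall ${\cal O}(d^{2/3}/(T+1)^{1/3})$ rate; carrying the powers of $(1-\lambda)$ separately through the initial-gap term (which scales as $1-\lambda$), the $\lambda$-free DFO-bias term, and the variance term (which scales as $1/(1-\lambda)$) yields the three quantities $c_5(1-\lambda)$, $c_6$, and $c_7/(1-\lambda)$ inside the maximum, thereby exposing the advertised trade-off in the forgetting factor.
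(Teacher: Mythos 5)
Your structural ingredients match the paper's proof: a descent-lemma analysis under Assumption~\ref{assu:Lip}, a three-way bias decomposition (smoothing error of order $\delta_k$, within-epoch drift controlled by $\eta_k d\G/\delta_k$, and Markovian non-stationarity handled by comparing against a frozen-kernel reference chain via Assumptions~\ref{assu:FastMixing} and \ref{assu:smooth_kernel}), and the crude second-moment bound $\EE[\norm{{\bm g}_k}^2\mid{\cal F}_k]\le (d\G S_k/\delta_k)^2$. The gap is in the final accounting, and it is fatal to the statement as claimed. Because you keep the weights $\eta_k S_k$ on $\norm{\grd{\cal L}(\prm_k)}^2$ and normalize only at the end by $\sum_{k}\eta_k S_k \sim (1-\lambda)^{-1}(T+1)^{1/3}$, your bias and variance sums carry an extra factor of $\eta_k$: as you yourself note, $\sum_k \eta_k S_k \delta_k^2$ and $\sum_k \eta_k^2 (d/\delta_k)^2 S_k^2$ both behave like $\sum_k (1+k)^{-1} = \Theta(\log T)$ under the step sizes \eqref{eq:stepsize_thm}. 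Hence your argument can only deliver a bound of order $d^{2/3}\log T/(T+1)^{1/3}$, and only on the $\eta_k S_k$-weighted average of $\EE\norm{\grd{\cal L}(\prm_k)}^2$, which is not the uniform average in \eqref{eq:main_result} (the algorithm's output rule $s\sim\text{Unif}(\{0,\dots,T\})$ corresponds to the unweighted average, so your parenthetical ``equivalently, through the output sampling rule'' is not correct). You cannot both sit at the ``summability boundary that produces the logarithmic factor'' and conclude the clean ${\cal O}(d^{2/3}(T+1)^{-1/3})$ rate; your last sentence silently drops the logarithm, but Theorem~\ref{thm1} has none.

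The paper avoids this with a different bookkeeping, and this is the idea your proposal is missing. It divides the per-epoch descent inequality by $\eta_k/(1-\lambda)$ \emph{before} summing, so that $\norm{\grd{\cal L}(\prm_k)}^2$ appears unweighted; the telescoping term is then handled by Abel summation together with $|{\cal L}|\le \G$ (Assumption~\ref{assu:BoundLoss}), giving $\sum_{k=0}^{t}\frac{1}{\eta_k}\left(\EE[{\cal L}(\prm_k)]-\EE[{\cal L}(\prm_{k+1})]\right)\le 2\G/\eta_{t+1}\sim t^{2/3}$, while the variance term becomes $\sum_k \eta_k/\delta_k^2 \sim t^{2/3}$ --- both a full power $t^{1/3}$ above the log boundary, so dividing by $(1+t)$ yields $t^{-1/3}$ with no logarithm. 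Moreover, instead of Young's inequality, the paper keeps the bias terms linear in $\norm{\grd{\cal L}(\prm_k)}$ and applies Cauchy--Schwarz across $k$, producing terms ${\cal A}(t)^{1/2}(1+t)^{1-\beta}$ and ${\cal A}(t)^{1/2}(1+t)^{1-(\alpha-2\beta)}$, and then solves the resulting quadratic inequality in ${\cal A}(t)^{1/2}$ (via $x^2\le ax+b \Rightarrow x^2\le 2(a^2+b)$); this is what produces the log-free rate on the plain average and the exact constant structure $12\max\{c_5(1-\lambda),c_6,c_7/(1-\lambda)\}$. Your Young-inequality split can in fact be salvaged, but only if you perform it after the same per-iteration division by $\eta_k S_k$, not by renormalizing at the end. (A minor additional slip: $\sum_{m=1}^{\tau_k}\lambda^{\tau_k-m}\rho^m$ is ${\cal O}(\tau_k\max\{\rho,\lambda\}^{\tau_k})$, not ${\cal O}(\max\{\rho,\lambda\}^{\tau_k})$, when $\rho=\lambda$; this extra $\tau_k$ is harmless given the choice of $\tau_k$ in \eqref{eq:stepsize_thm}.)
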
\noindent
We have defined the following quantities and constants: $c_5=2 \G$,
\begin{align} \label{eq:c567}
c_6=\frac{\max\{L^2,\G^2(1 - \beta)\}}{ 1-2\beta },
    ~  c_7=\frac{L\G^2 }{ 2 \beta - \alpha + 1 },
\end{align}
with ${\alpha = \frac{2}{3}, \beta = \frac{1}{6}}$. Observe the following corollary on the iteration complexity of {\algoname} algorithm:
\begin{Corollary} ($\epsilon$-stationarity)
Suppose that the Assumptions of Theorem \ref{thm1} hold. Fix any $\epsilon>0$, the condition $\frac{1}{1+T} \sum_{k=0}^{T}\EE\norm{\grd {\cal L}(\prm_k)}^2 \leq \epsilon$ holds whenever
\beq 
    T \geq \left( 12 \max\left\{c_5 (1-\lambda), c_6, \frac{c_7}{1-\lambda}\right\} \right)^3 \frac{d^2}{\epsilon^3} .
\eeq 
\end{Corollary}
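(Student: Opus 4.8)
The plan is to obtain the corollary as a direct inversion of the averaged bound \eqref{eq:main_result} established in Theorem~\ref{thm1}; no new probabilistic or optimization machinery is required, since all the analytic work has already been absorbed into that theorem. First I would abbreviate the prefactor on the right-hand side of \eqref{eq:main_result} by writing $C \eqdef 12\max\{c_5(1-\lambda),\,c_6,\,c_7/(1-\lambda)\}$, so that Theorem~\ref{thm1} reads, for every $T \geq t_0$,
\[
\frac{1}{1+T}\sum_{k=0}^{T}\EE\norm{\grd{\cal L}(\prm_k)}^2 \;\leq\; C\,\frac{d^{2/3}}{(T+1)^{1/3}}.
\]
It then suffices to force the right-hand side below $\epsilon$, i.e.\ to require $C\,d^{2/3}/(T+1)^{1/3}\leq \epsilon$.

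Next I would rearrange this single scalar inequality. Since every quantity involved is positive, it is equivalent to $(T+1)^{1/3}\geq C\,d^{2/3}/\epsilon$, and cubing both sides (an equivalence because $x\mapsto x^3$ is strictly increasing on $\RR_+$) gives $T+1 \geq C^3 d^2/\epsilon^3$. The corollary, however, states its threshold in terms of $T$ rather than $T+1$, so I would close the chain by observing that the hypothesis $T \geq C^3 d^2/\epsilon^3$ implies $T+1 > C^3 d^2/\epsilon^3$, which is more than enough for the displayed inequality; because $(T+1)^{-1/3}$ is strictly decreasing in $T$, the estimate persists for all larger $T$, so the stated threshold is a genuine sufficient condition and not merely an exact crossover point. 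Substituting the definition of $C$ back in recovers the advertised bound $T \geq \big(12\max\{c_5(1-\lambda),c_6,c_7/(1-\lambda)\}\big)^3 d^2/\epsilon^3$ verbatim.

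Since there is no analytic difficulty here, the only point that genuinely needs care—and the closest thing to an obstacle—is the regime of validity: Theorem~\ref{thm1} is asserted only for $T \geq t_0$. I would therefore check that the threshold $C^3 d^2/\epsilon^3$ is itself at least $t_0$ for the $\epsilon$ of interest; equivalently, restrict attention to $\epsilon$ small enough that $C^3 d^2/\epsilon^3 \geq t_0$, which holds for all sufficiently small $\epsilon$ because the threshold diverges as $\epsilon \to 0^+$. With this consistency verified, any $T$ satisfying the corollary's hypothesis automatically lies in the range where \eqref{eq:main_result} is applicable, and the iteration complexity $T = {\cal O}(d^2/\epsilon^3)$ follows immediately. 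Finally, inserting $\tau_k = {\cal O}(\log(1+k))$ from \eqref{eq:stepsize_thm} and summing over epochs converts this iteration count into the total sample count ${\cal O}\!\big(\tfrac{d^2}{\epsilon^3}\log(1/\epsilon)\big)$ reported in the abstract.
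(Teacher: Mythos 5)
Your proposal is correct and matches the paper's (implicit) argument exactly: the corollary is obtained by directly inverting the bound \eqref{eq:main_result} of Theorem~\ref{thm1}, cubing $(T+1)^{1/3} \geq C d^{2/3}/\epsilon$ to get $T+1 \geq C^3 d^2/\epsilon^3$, which the stated threshold on $T$ clearly implies. Your additional care about the $T \geq t_0$ regime (valid for all sufficiently small $\epsilon$) and the conversion to the sample count \eqref{eq:sample-complex} via $\tau_k = {\cal O}(\log(1+k))$ is consistent with how the paper treats these points.
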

In the corollary above, the lower bound on $T$ is expressed in terms of the number of epochs that Algorithm \ref{alg:dfo_lambda} needs to achieve the target accuracy. Consequently, the total number of samples required (i.e., the number of inner iterations taken in Line 6--9 of Algorithm~\ref{alg:dfo_lambda} across all epochs) is: 
\beq \label{eq:sample-complex} 
{\tt S}_{\epsilon} = \sum_{k=1}^{T} \tau_k = {\cal O}\left( \frac{d^2}{\epsilon^3} \log(1/\epsilon)\right).
\eeq 

We remark that due to the decision-dependent properties of the samples, the {\algoname} algorithm exhibits a worse sampling complexity \eqref{eq:sample-complex} than prior works in stochastic DFO algorithm, e.g., \citep{ghadimi2013} which shows a rate of ${\cal O}( d / \epsilon^2 )$ on non-convex smooth objective functions.
In particular, the adopted one-point gradient estimator in \eqref{eq:grd} admits a variance that can only be controlled by a time varying $\delta$; see the discussions in \S\ref{sec:hardness}. 
 
Achieving the desired convergence rate requires setting $\eta_k = \Theta( k^{-2/3} )$, $\delta_k = \Theta( k^{-1/6} )$, i.e., yielding a two-timescale step sizes design with $\eta_k / \delta_k \to 0$. Notice that the influence of forgetting factor $\lambda$ are reflected in the constant factor of \eqref{eq:main_result}. Particularly, if $c_5 > c_7$ and $c_5\geq c_6$, the optimal choice is $\lambda = 1-\sqrt{\frac{c_7}{c_5}}$, otherwise the optimal choice is $\lambda\in\left[0, 1-c_7/c_6\right]$. Informally, this indicates that when the performative risk is smoother (i.e. its gradient has a small Lipschitz constant), a large $\lambda$ can speed up the convergence of the algorithm; otherwise a smaller $\lambda$ is preferable.

\section{Proof Outline of Main Results}\label{sec:proof}
This section outlines the key steps in proving Theorem~\ref{thm1}. Notice that analyzing the {\algoname} algorithm is challenging due to the two-timescales step sizes and Markov chain samples with time varying kernel. Our analysis departs significantly from prior works such as \citep{ray2022decision, izzo2021learn, brown2022performative, li2022state} to handle the challenges above. 

Let ${\cal F}^k = \sigma( \prm_0, Z_{s}^{(m)}, u_s, 0 \leq s \leq k, 0 \leq m \leq \tau_{k})$ be the filtration. 
Our first step is to exploit the smoothness of ${\cal L}(\prm)$ to bound the squared norms of gradient. Observe that:
\begin{lemma}{\bf (Decomposition)}
\label{lem:decomposition}
Under Assumption \ref{assu:Lip}, 
it holds that
\begin{equation} \label{eq:decompose1}
\sum_{k=0}^{t}\EE\norm{\grd {\cal L}(\prm_k)}^2 \leq \term{1} + \term{2} + \term{3} + \term{4},
\end{equation}
for any $t \geq 1$, where 
\begin{align*}
&\textstyle \term{1} \eqdef \sum_{k=1}^{t}\frac{1-\lambda}{\eta_{k}}\left(\EE\left[{\cal L}(\prm_k)\right]-\EE\left[{\cal L}(\prm_{k+1})\right]\right)
\\
    &\textstyle \term{2} \eqdef -\sum_{k=1}^{t}\EE\bigg\langle \nabla {\cal L}(\prm_k) 
    \big| 
    (1-\lambda)\sum_{m=1}^{\tau_{k}}\lambda^{\tau_{k}-m} 
    \cdot \left(g_{k}^{(m)}-\EE_{Z\sim \Pi_{\cprm_k}}\left[g_{\delta_k}(\prm_{k};u_k, Z)\right]\right) \bigg\rangle
\\
    & \textstyle \term{3} \eqdef -\sum_{k=1}^{t}\EE\bigg\langle \grd {\cal L}(\prm_k) {\big|}  (1-\lambda)     \left(\sum_{m=1}^{\tau_{k}}\lambda^{\tau_{k}-m}\grd{{\cal L}_{\delta_{k}}(\prm_{k})}\right)-\grd {\cal L}(\prm_k) \bigg\rangle
\\
    & \textstyle \term{4} \eqdef \frac{L(1-\lambda)}{2}\sum_{k=1}^{t}\eta_{k}\EE \norm{\sum_{m=1}^{\tau_{k}}\lambda^{\tau_{k}-m}g_{k}^{(m)}}^2
\end{align*}
\end{lemma}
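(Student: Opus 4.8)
The plan is to treat the $\tau_k$ inner updates of epoch $k$ as a single aggregate step and apply one smoothness-based descent inequality per epoch. Summing the recursion $\prm_{k}^{(m+1)} = \prm_{k}^{(m)} - \eta_k \lambda^{\tau_k - m} g_{k}^{(m)}$ over $m = 1, \dots, \tau_k$ gives $\prm_{k+1} - \prm_k = -\eta_k G_k$, where $G_k \eqdef \sum_{m=1}^{\tau_k} \lambda^{\tau_k - m} g_{k}^{(m)}$ is the accumulated estimator. Assumption~\ref{assu:Lip} then yields
\[
{\cal L}(\prm_{k+1}) \le {\cal L}(\prm_k) - \eta_k \langle \grd {\cal L}(\prm_k), G_k \rangle + \tfrac{L \eta_k^2}{2}\norm{G_k}^2 .
\]
Multiplying by $(1-\lambda)/\eta_k$ and invoking the elementary identity $\norm{\grd {\cal L}(\prm_k)}^2 = (1-\lambda)\langle \grd {\cal L}(\prm_k), G_k\rangle - \langle \grd {\cal L}(\prm_k),\, (1-\lambda)G_k - \grd {\cal L}(\prm_k)\rangle$ isolates $\norm{\grd {\cal L}(\prm_k)}^2$ on the left. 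After taking expectations, the function-value gap becomes $\term{1}$ and the quadratic becomes $\term{4}$ directly, leaving the residual inner product $-\langle \grd {\cal L}(\prm_k),\,(1-\lambda)G_k - \grd {\cal L}(\prm_k)\rangle$ to be split into $\term{2}$ and $\term{3}$.

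The heart of the argument is this split. I introduce the anchored, stationary mean $\bar g_k \eqdef \EE_{Z\sim\Pi_{\cprm_k}}[g_{\delta_k}(\prm_k; u_k, Z)]$ and write
\[
(1-\lambda)G_k - \grd {\cal L}(\prm_k) = (1-\lambda)\sum_{m=1}^{\tau_k}\lambda^{\tau_k - m}\bigl(g_{k}^{(m)} - \bar g_k\bigr) + \Bigl[(1-\lambda)\sum_{m=1}^{\tau_k}\lambda^{\tau_k - m}\bar g_k - \grd {\cal L}(\prm_k)\Bigr].
\]
Pairing the first group against $\grd {\cal L}(\prm_k)$ and summing over epochs produces exactly $\term{2}$, which collects the Markovian/drift error. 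For the bracketed group I condition on ${\cal F}^{k-1}$: since $u_k$ is drawn fresh and is independent of the past while $\grd {\cal L}(\prm_k)$ is ${\cal F}^{k-1}$-measurable, and since $\bar g_k = \tfrac{d}{\delta_k}{\cal L}(\cprm_k)\,u_k$, the spherical-smoothing identity $\EE_{u_k}[\bar g_k] = \grd {\cal L}_{\delta_k}(\prm_k)$ replaces $\bar g_k$ by the deterministic smoothed gradient, reproducing $\term{3}$. Taking total expectations and summing over $k$ assembles \eqref{eq:decompose1}; the off-by-one between the lower summation limits on the two sides is absorbed by the boundary terms of the telescoping function-value gap.

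The only delicate point I anticipate is the $\term{3}$ step: $\bar g_k$ must be anchored at the fixed epoch-start iterate $\prm_k = \prm_k^{(1)}$ rather than the moving $\prm_k^{(m)}$, so that the smoothing identity applies verbatim and the conditional expectation over $u_k$ is clean. All of the discrepancy between $g_k^{(m)}$ -- which is evaluated at the drifting iterate $\prm_k^{(m)}$ and at the non-stationary Markov sample $Z_k^{(m)}$ -- and the anchored, stationary $\bar g_k$ is deliberately pushed into $\term{2}$, to be controlled later via the small step sizes and the mixing conditions (Assumptions~\ref{assu:FastMixing}--\ref{assu:smooth_kernel}). Beyond the single smoothness descent inequality, every step here is an exact identity, so the lemma itself requires no further estimates.
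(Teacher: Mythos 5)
Your proposal is correct and follows essentially the same route as the paper's own proof: one descent-lemma step per epoch on the aggregated update $\prm_{k+1}-\prm_k=-\eta_k G_k$, the same algebraic isolation of $\norm{\grd {\cal L}(\prm_k)}^2$ via the $(1-\lambda)$ identity, and the same add-and-subtract of the anchored stationary mean $\EE_{Z\sim\Pi_{\cprm_k}}[g_{\delta_k}(\prm_k;u_k,Z)]$ followed by the spherical-smoothing identity (the paper's Lemma~\ref{lem:UnbiasedGrad}) to produce $\term{2}$ and $\term{3}$. The only cosmetic difference is the $k=0$ versus $k=1$ lower summation limit, which is an indexing inconsistency already present in the paper's statement (its proof sums from $k=0$), not a gap in your argument.
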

The lemma is achieved through the standard descent lemma implied by Assumption~\ref{assu:Lip} and decomposing the upper bound on $|| \grd {\cal L}(\prm_k) ||^2$ into respectful terms; see the proof in Appendix~\ref{ap:decompose}. Among the terms on the right hand side of \eqref{eq:decompose1}, $\term{1},\term{3}$ and $\term{4}$ arises directly from Assumption~\ref{assu:Lip}, while $\term{2}$ comes from bounding the noise terms due to Markovian data.

We bound the four components in Lemma \ref{lem:decomposition} as follows. For simplicity, we denote ${\cal A}(t) \eqdef \frac{1}{1+t} \sum_{k=0}^{t} \EE\norm{\grd {\cal L}(\prm_k)}^2$.
Among the four terms, we highlight that the main challenge lies on obtaining a tight bound for $\term{2}$. Observe that\vspace{-.1cm}
\beq\label{eq:aa}
\begin{aligned}
\frac{\term{2}}{1-\lambda} & \leq \EE \left[ \sum_{k=0}^{t} \norm{\grd {\cal L}(\prm_k)} \bigg\|\sum_{m=1}^{\tau_{k}}\lambda^{\tau_{k} -m} \Delta_{k,m} \bigg\| \right], \\
\Delta_{k,m} & \overset{\text{def}}{=} \EE_{{\cal F}^{k-1}}\![ g_{k}^{(m)}\!-\!\EE_{Z\sim \Pi_{\cprm_k}}g_{k}(\prm_{k};u_{k},Z)]. \vspace{-.1cm}
\end{aligned}
\eeq
There are two sources of bias in $\Delta_{k,m}$: one is the noise induced by drifting of decision variable within each epoch, the other is the bias that depends on the mixing time of Markov kernel. 

To control these biases, we introduce a reference Markov chain $\Tilde{Z}_{k}^{(\ell)}$, $\ell=0,...,\tau_k$, whose decision variables remains fixed for a period of length $\tau_k$ and is initialized with $\Tilde{Z}_{k}^{(0)} = Z_{k}^{(0)}$:
\vspace{-.1cm}
\begin{align}
    &\Tilde{Z}_{k}^{(0)}\xrightarrow[]{\cprm_{k}}\Tilde{Z}_{k}^{(1)}\xrightarrow[]{\cprm_{k}}\Tilde{Z}_{k}^{(2)}\xrightarrow[]{\cprm_{k}}\Tilde{Z}_{k}^{(3)}\cdots\xrightarrow[]{\cprm_{k}}\Tilde{Z}_{k}^{(\tau_{k})}
    \label{chain:ref}
\end{align}
and we recall that the actual chain in the algorithm evolves as
\begin{equation}
    Z_{k}^{(0)}\xrightarrow[]{\cprm_{k+1}^{(0)}}Z_{k}^{(1)}\xrightarrow[]{\cprm_{k+1}^{(1)}}Z_{k}^{(2)}\cdots\xrightarrow[]{\cprm_{k+1}^{(\tau_{k}-1)}}Z_{k}^{(\tau_{k})}.
    \label{chain:real}
\end{equation}
Note the reference Markov chain idea is inspired by {\citep[Theorem 4.7]{wu2020finite}} which studied the convergence of a strongly convex subproblem.


With the help of the reference chain, we decompose the conditional expectation $\Delta_{k,m}$ as:
\begin{align*}
    \Delta_{k, m} &= \EE_{{\cal F}^{k-1}}
    \bigg[\frac{d}{\delta_{k}}  \bigg(\EE[\ell(\cprm_{k}^{(m)}; Z_{k}^{(m)})|\cprm_{k}^{(m)}, Z_{k}^{(0)}]
     - \EE_{\Tilde{Z}_{k}^{(m)}}[\ell(\cprm_{k}^{(m)}; \Tilde{Z}_{k}^{(m)})|\cprm_{k}^{(m)}, \Tilde{Z}_{k}^{(0)}] \bigg) {\bm u}_{k}\bigg]
    \\
    &\quad + \EE_{{\cal F}^{k-1}} \bigg[ \frac{d}{\delta_{k}}   \bigg(\EE_{\Tilde{Z}_{k}^{(m)}}[\ell(\cprm_{k}^{(m)}; \Tilde{Z}_{k}^{(m)})|\cprm_{k}^{(m)}, \Tilde{Z}_{k}^{(0)}]- \EE_{Z\sim \Pi_{\cprm_k}}[\ell(\cprm_{k}^{(m)}; Z)|\cprm_{k}^{(m)}] \bigg) {\bm u}_{k} \bigg]
    \\
    &\quad + {\EE}_{{\cal F}^{k-1}} \Big[\frac{d}{\delta_{k}} \underset{Z\sim\Pi_{\cprm_{k}}}{\EE} \left[ \ell(\cprm_{k}^{(m)}; Z) - \ell(\cprm_k; Z)|\cprm_{k}^{(m)}, \cprm_{k}\right]  {\bm u}_{k}\Big] 
        \\
        & \eqdef A_{1} + A_{2} + A_{3}
\end{align*}
We remark that $A_1$ reflects the drift of (\ref{chain:real}) from initial sample $Z_k^{(0)}$ driven by varying $\check{\prm}_{k}^{(m)}$, 
$A_2$ captures the statistical discrepancy between above two Markov chains (\ref{chain:real}) and (\ref{chain:ref}) at same step $m$, and $A_3$ captures the drifting gap between $\check{\prm}^{}_{k}$ and $\check{\prm}^{(m)}_{k}$. Applying Assumption \ref{assu:smooth_dist}, $A_1$ and $A_2$ can be upper bounded with the smoothness and geometric mixing property of Markov kernel. In addition, $A_3$ can be upper bounded using Lipschitz condition on (stationary) distribution map $\Pi_{\prm}$. 
Finally, the forgetting factor $\lambda$ helps to control $\normtxt{\check{\prm}_{k}^{(\cdot)} - \check{\prm}_{k} }$ to be at the same order of a single update. Therefore, $\norm{\Delta_{k,m}}$ can be controlled by an upper bound relying on $\lambda, \rho, L$.


The following lemma summarizes the above results as well as the bounds on the other terms:
\begin{lemma}\label{lem:bound_four_terms} 
Under Assumption  \ref{assu:BoundLoss}, \ref{assu:smooth_dist}, \ref{assu:FastMixing} and \ref{assu:smooth_kernel}, with $\eta_{t+1}=\eta_{0}(1+t)^{-\alpha}$, $\delta_{t+1}=\delta_0 (1+t)^{-\beta}$ and $\alpha \in (0,1)$, $\beta \in (0, \frac{1}{2})$. Suppose $0 < 2\alpha-4\beta < 1$ and
\[ 
\tau_{k}\geq\frac{1}{\log 1/\max\{\rho,\lambda\}}\left(\log(1+k)+\max\{\log\frac{\delta_0}{d}, 0\}\right).
\] 
Then, it holds that $\forall ~t \geq \max\{t_1,t_2\}$
\begin{align}
    {\bf I}_{1}(t) &\leq c_1 (1-\lambda) (1+t)^{\alpha},
    \\
    {\bf I}_{2}(t) & \leq \frac{ c_2 d^{5/2}}{(1-\lambda)^2}{{\cal A}(t)}^{\frac{1}{2}} (1+t)^{1-(\alpha-2\beta)},  \label{eq:i2}
    \\
    {\bf I}_{3}(t) &\leq c_3 {{\cal A}(t)}^{\frac{1}{2}} (1+t)^{1-\beta}, 
    \\
    {\bf I}_{4}(t) &\leq  \frac{c_4 d^2}{1-\lambda} (1+t)^{1-\left(\alpha-2\beta\right)}, \label{eq:i4-bound}
\end{align}
where $t_1, t_2$ are defined in \eqref{eq:const-t1}, \eqref{eq:const-t2}, and $c_1, c_2, c_3, c_4$ are constants defined as follows:
\begin{align*}
    c_{1} &\eqdef 2\G / \eta_0,
    ~~
    c_{2} \eqdef\frac{\eta_0}{\delta_0^2}\frac{6\cdot (L_1\G^2+L_2\G^2+\sqrt{L}\G^{3/2} )}{\sqrt{1-2\alpha+4\beta}},
    \\
    \textstyle c_{3} &\eqdef \frac{2}{\sqrt{1-2\beta}}\max\{L\delta_0, \G \sqrt{1-\beta}\},
    \\
    c_{4} &\eqdef \frac{\eta_{0}}{\delta_{0}^2}\cdot\frac{ L \G^2}{2\beta-\alpha+1}.
\end{align*}
\end{lemma}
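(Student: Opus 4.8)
The plan is to bound the four quantities ${\bf I}_1(t),\dots,{\bf I}_4(t)$ one at a time, since each isolates a different error source in the decomposition \eqref{eq:decompose1}. Two of them (${\bf I}_1$ and ${\bf I}_4$) are essentially deterministic bookkeeping using the bounded-loss Assumption~\ref{assu:BoundLoss}; ${\bf I}_3$ is the classical derivative-free smoothing bias; and ${\bf I}_2$, the Markovian-noise term, is where the substantive work lies. Throughout, the recurring device is a Cauchy--Schwarz step across the epoch index $k$ that converts a sum $\sum_k \|\grd{\cal L}(\prm_k)\|\, b_k$ into $((1+t){\cal A}(t))^{1/2}(\sum_k b_k^2)^{1/2}$, which is exactly what produces the ${\cal A}(t)^{1/2}$ factors in \eqref{eq:i2} and ${\bf I}_3$.

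First I would treat ${\bf I}_1(t)$ by summation by parts. Writing $a_k=\EE[{\cal L}(\prm_k)]$ (bounded by $\G$) and $b_k=1/\eta_k$ (nondecreasing since $\eta_k$ decreases), Abel summation of $\sum_k b_k(a_k-a_{k+1})$ telescopes the nonnegative increments $b_k-b_{k-1}$ and leaves a bound of order $\G/\eta_t$; multiplying by $(1-\lambda)$ and using $1/\eta_t$ of order $\eta_0^{-1}(1+t)^{\alpha}$ gives the claimed bound with $c_1=2\G/\eta_0$. For ${\bf I}_4(t)$ I would first control the estimator norm deterministically: Assumption~\ref{assu:BoundLoss} yields $\|g_k^{(m)}\|\le d\G/\delta_k$, so the forgetting-weighted sum satisfies $\|\sum_m \lambda^{\tau_k-m}g_k^{(m)}\|\le (d\G/\delta_k)/(1-\lambda)$. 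Squaring and inserting the step size reduces ${\bf I}_4(t)$ to a multiple of $\tfrac{d^2}{1-\lambda}\sum_k \eta_k/\delta_k^2$; since $\eta_k/\delta_k^2 = (\eta_0/\delta_0^2)(1+k)^{-(\alpha-2\beta)}$ and $\alpha-2\beta<1$, the sum is of order $(1+t)^{1-(\alpha-2\beta)}$, giving \eqref{eq:i4-bound} with the stated $c_4$.

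For ${\bf I}_3(t)$ the key identity is $(1-\lambda)\sum_{m=1}^{\tau_k}\lambda^{\tau_k-m}=1-\lambda^{\tau_k}$, so the bracket equals $(1-\lambda^{\tau_k})(\grd{\cal L}_{\delta_k}(\prm_k)-\grd{\cal L}(\prm_k))-\lambda^{\tau_k}\grd{\cal L}(\prm_k)$. The first piece is the smoothing bias, bounded by $L\delta_k$ through Assumption~\ref{assu:Lip} (the fact $\|\grd{\cal L}_\delta-\grd{\cal L}\|={\cal O}(\delta)$ stated earlier); the second carries the factor $\lambda^{\tau_k}$, which the prescribed $\tau_k\ge \log(1+k)/\log(1/\max\{\rho,\lambda\})$ forces to decay at least like $(1+k)^{-1}$, so it is geometrically small. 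Applying Cauchy--Schwarz across $k$ to $\sum_k\|\grd{\cal L}(\prm_k)\|\,L\delta_k$ and using $\sum_k\delta_k^2={\cal O}((1+t)^{1-2\beta})$ (as $2\beta<1$) gives the power $(1+t)^{1/2}(1+t)^{(1-2\beta)/2}=(1+t)^{1-\beta}$; both contributions are subsumed by the $\max$ in $c_3$.

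The main obstacle is ${\bf I}_2(t)$, for which I would follow the reference-chain split $\Delta_{k,m}=A_1+A_2+A_3$ already introduced. The plan is to bound $A_2$ (gap between the frozen-parameter reference chain \eqref{chain:ref} at step $m$ and its stationary law $\Pi_{\cprm_k}$) by a multiple of $\tfrac{d}{\delta_k}\G M\rho^{m}$ via the geometric mixing Assumption~\ref{assu:FastMixing} and $|\ell|\le\G$; to bound $A_1$ (discrepancy between the true chain \eqref{chain:real} and the reference chain, caused by the kernel drifting with $\cprm_k^{(m)}$) by telescoping one-step kernel differences of size $L_2\|\prm_k^{(j)}-\prm_k\|$ through Assumption~\ref{assu:smooth_kernel}; and to bound $A_3$ (evaluating the loss at $\cprm_k^{(m)}$ versus $\cprm_k$ under the common law $\Pi_{\cprm_k}$) by re-centering the inner distribution with the Lipschitz map Assumption~\ref{assu:smooth_dist} and the smoothness Assumption~\ref{assu:Lip}. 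The shared ingredient is the within-epoch drift $\|\prm_k^{(m)}-\prm_k\|$, controlled through $\|g_k^{(m)}\|\le d\G/\delta_k$ and the weights $\lambda^{\tau_k-m}$; summing these weights contributes one factor $(1-\lambda)^{-1}$, and accumulating the drift inside $A_1$ over the $m$ steps contributes a second, which is what yields the $(1-\lambda)^{-2}$ and, together with the compounded $d/\delta_k$ prefactors of the one-point estimator, the dimension factor $d^{5/2}$. Since the burn-in choice makes $\rho^{\tau_k},\lambda^{\tau_k}\le(1+k)^{-1}$, the $A_2$ contribution is subdominant and is absorbed into the threshold $t_1$, leaving the drift terms $A_1,A_3$ of order $\eta_k/\delta_k^2$ to dominate; a final Cauchy--Schwarz over $k$ then extracts ${\cal A}(t)^{1/2}$ and, via $\sum_k b_k^2={\cal O}((1+t)^{1-2(\alpha-2\beta)})$ (using $2\alpha-4\beta<1$), the power $(1+t)^{1-(\alpha-2\beta)}$ in \eqref{eq:i2}. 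I expect the delicate point to be ensuring the $A_3$ bound does \emph{not} reintroduce a $\|\grd{\cal L}(\prm_k)\|$ factor, which would degrade ${\cal A}(t)^{1/2}$ into ${\cal A}(t)$: handling the ${\cal L}(\cprm_k^{(m)})-{\cal L}(\cprm_k)$ difference via the descent inequality and folding its first-order part into the drift/smoothing terms is the step demanding the most care, and it is the reason the thresholds $t_1,t_2$ and the lower bound on $\tau_k$ enter the statement.
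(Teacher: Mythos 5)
Your proposal is correct and follows essentially the same route as the paper's proof: summation by parts for ${\bf I}_1$, the deterministic bound $\|g_k^{(m)}\|\le d\G/\delta_k$ plus the step-size sum $\sum_k \eta_k/\delta_k^2$ for ${\bf I}_4$, the geometric-sum identity with the smoothing bias $L\delta_k$ and Cauchy--Schwarz for ${\bf I}_3$, and for ${\bf I}_2$ the same reference-chain decomposition into $A_1,A_2,A_3$ handled via kernel smoothness (Assumption~\ref{assu:smooth_kernel}), geometric mixing (Assumption~\ref{assu:FastMixing}), and the Lipschitz decoupled-risk bound, with the within-epoch drift and final Cauchy--Schwarz over $k$ producing ${\cal A}(t)^{1/2}(1+t)^{1-(\alpha-2\beta)}$. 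The only deviations are cosmetic (you peel off $\lambda^{\tau_k}\grd{\cal L}(\prm_k)$ rather than $\lambda^{\tau_k}\grd{\cal L}_{\delta_k}(\prm_k)$ in ${\bf I}_3$, and your $t_1$/$t_2$ labeling is swapped relative to \eqref{eq:const-t1}--\eqref{eq:const-t2}), and your flagged concern about $A_3$ is resolved exactly as the paper does it, via the uniform bound $\|\grd{\cal L}\|\le 2\sqrt{L\G}$ inside Lemma~\ref{lem:lip_decoupled_risk}.
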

See Appendix \ref{App_bound_four_terms} for the proof. 

We comment that the bound for ${\bf I}_{4}(t)$ cannot be improved. As a concrete example, consider the constant function
$\ell(\prm; z) = {\rm c} \neq 0$ for all $z \in {\sf Z}$, it can be shown that $\| g_{k}^{(m)} \|^2 = {\rm c}^2$ and consequently ${\bf I}_4(t) = \Omega( \eta_{k}/ \delta_{k}^2 ) = \Omega( t^{1-(\alpha-2\beta)} )$, which matches \eqref{eq:i4-bound}.

Finally, plugging Lemma~\ref{lem:bound_four_terms} into Lemma~\ref{lem:decomposition} gives:
\begin{align*}
    {\cal A}(t) &\leq \frac{c_1 (1-\lambda)}{(1+t)^{1-\alpha}} + \frac{ c_2 d^{5/2}}{(1-\lambda)^2} \frac{ {{\cal A}(t)}^{\frac{1}{2}} }{ (1+t)^{\alpha-2\beta} }  + c_3 \frac{ {{\cal A}(t)}^{\frac{1}{2}} }{ (1+t)^\beta } + c_4 \frac{d^2}{1-\lambda} \frac{1}{(1+t)^{\alpha-2\beta} }.
\end{align*}
Since ${\cal A}(t) \geq 0$, the above is a quadratic inequality that implies the following bound:
\begin{lemma}
\label{lem:major_bound}
    Under Assumption \ref{assu:Lip}--\ref{assu:smooth_kernel}, with the step sizes $\eta_{t+1}=\eta_{0}(1+t)^{-\alpha}$, $\delta_{t+1}=\delta_0 (1+t)^{-\beta}, \tau_{k}\geq\frac{1}{\log 1/\max\{\rho,\lambda\}}\left(\log(1+k)+\max\{\log\frac{\delta_0}{d}, 0\}\right)$, $\eta_0=d^{-2/3}, \delta_0=d^{1/3}$, $\alpha \in (0,1)$, $\beta \in (0, \frac{1}{2})$. If $2\alpha-4\beta < 1$, then there exists a constant $t_0$ such that the iterates $\{\prm_k\}_{k\geq 0}$ satisfies the following inequality for all $T \geq t_0$,
    \begin{align*}
        &\frac{1}{1+T}\sum_{k=0}^{T}\EE\norm{\grd{{\cal L}(\prm_k)}}^2 \leq 12 \max \left\{c_5 (1-\lambda), c_6, \frac{c_7}{1-\lambda} \right\} d^{2/3} T^{-\min\{2\beta,1-\alpha,\alpha-2\beta\}}.
    \end{align*}
\end{lemma}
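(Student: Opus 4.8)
The starting point is the scalar inequality assembled immediately above the lemma. Writing $x \eqdef {\cal A}(t)^{1/2} \ge 0$, it has the form $x^2 \le P_t + Q_t\,x$, where $P_t$ collects the two terms without a factor ${\cal A}(t)^{1/2}$ (the $c_1$ and $c_4$ contributions) and $Q_t = \frac{c_2 d^{5/2}}{(1-\lambda)^2}(1+t)^{-(\alpha-2\beta)} + c_3 (1+t)^{-\beta}$ collects the coefficients of the two terms that do carry ${\cal A}(t)^{1/2}$ (the $c_2$ and $c_3$ contributions). The plan is to treat this as a quadratic in $x$ and invert it: from $x^2 - Q_t x - P_t \le 0$ one gets $x \le \tfrac12\big(Q_t + \sqrt{Q_t^2 + 4P_t}\big) \le Q_t + \sqrt{P_t}$, hence ${\cal A}(t) = x^2 \le 2Q_t^2 + 2P_t$ (equivalently, apply Young's inequality to the cross term $Q_t x$ and absorb $\tfrac12 x^2$ into the left-hand side). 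This eliminates the ${\cal A}(t)^{1/2}$ factors coming from $\mathbf{I}_2,\mathbf{I}_3$, at the cost of squaring their coefficients.

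Second, I would substitute $\eta_0 = d^{-2/3}$ and $\delta_0 = d^{1/3}$ into the constants $c_1,\dots,c_4$ of Lemma~\ref{lem:bound_four_terms}. The key bookkeeping observation is that the $d$-factors collapse to $d^{2/3}$: one has $c_1(1-\lambda) = 2\G(1-\lambda)\,d^{2/3} = c_5(1-\lambda)\,d^{2/3}$ with $c_5 = 2\G$; the $\mathbf{I}_4$-coefficient satisfies $\frac{c_4 d^2}{1-\lambda} = \frac{c_7}{1-\lambda}\,d^{2/3}$ since $c_4 = \frac{\eta_0}{\delta_0^2}\frac{L\G^2}{2\beta-\alpha+1} \propto d^{-4/3}$; and the squared $\mathbf{I}_3$-coefficient obeys $2c_3^2 \le 8\,d^{2/3} c_6$ using $\delta_0^2 = d^{2/3}$, $d \ge 1$, and the definition of $c_6$. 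Thus $2P_t + 2Q_t^2$ has three leading contributions, each proportional to $d^{2/3}$ and decaying at rates $(1+t)^{-(1-\alpha)}$, $(1+t)^{-(\alpha-2\beta)}$, $(1+t)^{-2\beta}$ with coefficients $2c_5(1-\lambda)$, $2\frac{c_7}{1-\lambda}$, $8c_6$. Bounding each by the slowest rate $(1+t)^{-\min\{1-\alpha,\,\alpha-2\beta,\,2\beta\}}$ and using $2+2+8 = 12$ together with $2a + 2b + 8c \le 12\max\{a,b,c\}$ produces exactly the claimed constant $12\max\{c_5(1-\lambda),\,c_6,\,\frac{c_7}{1-\lambda}\}$.

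Third, the single term that escapes this pattern is the squared $\mathbf{I}_2$-coefficient $2Q_{2,t}^2$ arising from \eqref{eq:i2}: its $d$-dependence is worse (the squared coefficient scales as $d^{7/3}$, since $c_2 \propto \eta_0/\delta_0^2 = d^{-4/3}$ makes $(c_2 d^{5/2})^2 \propto d^{7/3}$) and it carries a $(1-\lambda)^{-4}$ factor, but it decays at the strictly faster rate $(1+t)^{-2(\alpha-2\beta)}$. The plan is to exploit precisely this faster decay: since $2(\alpha-2\beta) > \alpha-2\beta \ge \min\{1-\alpha,\alpha-2\beta,2\beta\}$ whenever $\alpha > 2\beta$ (which is needed for a positive rate in any case), there is a threshold $t_0 \ge \max\{t_1,t_2\}$, depending on $d$, $\lambda$ and the constants, beyond which $2Q_{2,t}^2$ is dominated by the three leading terms and is absorbed without inflating the factor $12$. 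This fixes the $t_0$ in the statement, and the constraint $2\alpha - 4\beta < 1$ is what keeps $c_2$ finite in the first place.

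I expect the main obstacle to be exactly the control of the $\mathbf{I}_2$ contribution: it is the only place where $(1-\lambda)^{-2}$ and the extra powers of $d$ enter, so one must check that its faster temporal decay genuinely compensates for its worse $d$- and $\lambda$-dependence, and pin down a $t_0$ that makes this rigorous while keeping the other three terms aligned under the single exponent $\min\{2\beta,\,1-\alpha,\,\alpha-2\beta\}$. The remaining steps — the quadratic inversion and the collapse of the $d$-powers into $d^{2/3}$ — are elementary once the constants of Lemma~\ref{lem:bound_four_terms} are substituted.
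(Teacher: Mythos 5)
Your overall route is the paper's route: invert the quadratic inequality in ${\cal A}(t)^{1/2}$, substitute $\eta_0=d^{-2/3}$, $\delta_0=d^{1/3}$ so the surviving coefficients collapse to $d^{2/3}$ times $c_5(1-\lambda)$, $c_6$, $c_7/(1-\lambda)$, and dispose of the ${\bf I}_2$ contribution by exploiting its strictly faster decay $(1+t)^{-2(\alpha-2\beta)}$ beyond a threshold (the paper's $t_3$ plays exactly the role of your $t_0$). Your $d$-power bookkeeping ($c_2 d^{5/2}$ squaring to $d^{7/3}$, $c_4 d^2 = c_7 d^{2/3}$, etc.) also matches the paper.

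The gap is in the constant. Since $Q_t = Q_{2,t}+Q_{3,t}$, the quantity $2Q_t^2$ is \emph{not} $2Q_{2,t}^2+2Q_{3,t}^2$: either you expand exactly and pick up the cross term $4Q_{2,t}Q_{3,t}>0$, which decays at rate $(1+t)^{-(\alpha-\beta)}$ and which your accounting never mentions, or you use $(x+y)^2\le 2x^2+2y^2$ as the paper does, in which case the $(1+t)^{-2\beta}$ coefficient is $4c_3^2\le 16\,d^{2/3}c_6$ rather than $2c_3^2\le 8\,d^{2/3}c_6$, and your sum becomes $2+2+16=20$, not $12$. This cannot be patched by your absorption step, because your three leading coefficients $2c_5(1-\lambda)$, $2c_7/(1-\lambda)$, $8c_6$ already sum to \emph{exactly} $12\max\{\cdot\}$ when the three arguments of the max coincide, leaving zero slack to swallow any strictly positive leftover. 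Concretely, take $\alpha=2/3$, $\beta=1/6$ (the exponents used in Theorem~\ref{thm1}) and parameters with $c_5(1-\lambda)=c_6=c_7/(1-\lambda)$: all three leading terms decay at the common rate $(1+t)^{-1/3}$, your chain of inequalities only yields ${\cal A}(T)\le 12\max\{\cdot\}\,d^{2/3}(1+T)^{-1/3}+\Theta(T^{-1/2})$, and since $T^{-1/3}-(1+T)^{-1/3}=\Theta(T^{-4/3})\ll T^{-1/2}$, this is \emph{not} bounded by $12\max\{\cdot\}\,d^{2/3}\,T^{-1/3}$ for large $T$; no choice of $t_0$ recovers the constant $12$, only $12+\epsilon$. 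The paper avoids this by absorbing \emph{before} taking the max: for $t\ge t_3$ it bounds $4Q_{2,t}^2\le 2c_4\frac{d^2}{1-\lambda}(1+t)^{-(\alpha-2\beta)}$, inflating the ${\bf I}_4$ coefficient from $2$ to $4$, and then bounds the coefficient triple $(2,4,4)$ termwise by $4\max\{\cdot\}$, so that summing the three decay factors gives $4\times 3=12$ with slack to spare. Note this works because the paper's proof defines $c_6=4\max\{L^2,\G^2(1-\beta)\}/(1-2\beta)$, i.e.\ with a factor $4$ that the main-text display \eqref{eq:c567} omits; your allocation was implicitly aiming at the smaller main-text $c_6$, which is precisely why it runs out of room.
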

Optimizing the step size exponents $\alpha, \beta$ in the above concludes the proof of Theorem~\ref{thm1}.



\subsection{Discussions}\label{sec:hardness}
We conclude by discussing two alternative zero-th order gradient estimators to \eqref{eq:grd}, and argue that they do not improve over the sample complexity in the proposed {\algoname} algorithm. We study:
\begin{align*}\textstyle
    {\bm g}_{\sf 2pt-I}  &\eqdef \frac{d}{\delta}  \left[\ell\left(\prm+\delta {\bm u}; Z \right) - \ell(\prm; Z) \right] {\bm u}, 
    \\
    {\bm g}_{\sf 2pt-II}  &\eqdef \frac{d}{\delta}  \left[\ell\left(\prm+\delta {\bm u}; Z_1 \right) - \ell(\prm; Z_2) \right] {\bm u}, 
\end{align*}
where ${\bm u} \sim \unif(\mathbb{S}^{d-1})$. For ease of illustration, we assume that the samples $Z, Z_1, Z_2$ are drawn directly from the stationary distributions $Z \sim \Pi_{\prm+\delta u}$, $Z_1 \sim \Pi_{\prm+\delta {\bm u} }$, $Z_2\sim \Pi_{\prm}$. 

We recall from \S\ref{sec:setup} that the estimator ${\bm g}_{\sf 2pt-I}$ is a finite difference approximation of the directional derivative of objective function along the randomized direction ${\bm u}$\footnote{In \cite{Nesterov2017, ghadimi2013}, ${\bm u}$ is drawn from the Gaussian distribution.}, as proposed in \cite{Nesterov2017, ghadimi2013}. For non-convex stochastic optimization with decision independent sample distribution, i.e., $\Pi_{\prm} \equiv \bar{\Pi}$ for all $\prm$, the DFO algorithm based on ${\bm g}_{\sf 2pt-I}$ is known to admit an optimal sample complexity of ${\cal O}(1/\epsilon^2)$ \citep{Jamieson2012}.
Note that $\EE_{{\bm u} \sim {\unif(\mathbb{S}^{d-1})} , Z \sim \bar{\Pi}} [ \ell(\prm;Z) {\bm u} ] = {\bm 0}$.

However, in the case of decision-dependent sample distribution as in \eqref{perf}, ${\bm g}_{\sf 2pt-I}$ would become a \emph{biased} estimator since the sample $Z$ is drawn from $\Pi_{\prm+\delta {\bm u}}$ which depends on ${\bm u}$. The DFO algorithm based on ${\bm g}_{\sf 2pt-I}$ may not converge to a stationary solution of \eqref{perf}. 

A remedy to handle the above issues is to consider the estimator ${\bm g}_{\sf 2pt-II}$ which utilizes \emph{two samples} $Z_1, Z_2$, each independently drawn at a different decision variable, to form the gradient estimate. In fact, it can be shown that $\EE[ {\bm g}_{\sf 2pt-II} ] = \grd {\cal L}_\delta( \prm )$ yields an unbiased gradient estimator. 
However, due to the decoupled random samples $Z_1, Z_2$, we have
\begin{align*}
    {\textstyle \EE\norm{{\bm g}_{\sf 2pt-II}}^2} &= \frac{d^2}{\delta^2} \EE\left(\ell\left(\prm+\delta {\bm u}; Z_1 \right) - \ell(\prm; Z_1) + \ell(\prm; Z_1) - \ell(\prm; Z_2)\right)^2\\
    &\overset{(a)}{\geq} \frac{d^2}{\delta^2} \EE\bigg[\frac{3}{4}\left(\ell(\prm; Z_1) - \ell(\prm; Z_2)\right)^{2}  - 3\left(\ell\left(\prm+\delta {\bm u}; Z_1 \right) - \ell(\prm; Z_1)\right)^{2}\bigg] \\
    &= \frac{3d^2}{\delta^2} \!\! \left( \frac{{\sf Var}[\ell(\prm;Z)]}{2} - \EE\left[\left(\ell\left(\prm+\delta {\bm u}; Z_1 \right) - \ell(\prm; Z_1)\right)^{2}\right] \right)
    \\
    &\overset{(b)}{\geq} \frac{3}{2}\frac{\sigma^2 d^2}{\delta^2}-3\mu^2 d^2 = \Omega(d^2 /\delta^2).
\end{align*}
where in (a) we use the fact that $(x+y)^2\geq\frac{3}{4}x^2-3 y^2$, in (b) we assume ${\sf Var}[\ell(\prm;Z)]\eqdef\EE\left(\ell(\prm;Z)-{\cal L}(\prm)\right)^2\geq \sigma^2>0$ and $\ell(\prm;z)$ is $\mu$-Lipschitz in $\prm$.
As such, this two-point gradient estimator does not significantly reduce the variance when compared with \eqref{eq:grd}. Note that a two-sample estimator also incurs additional sampling overhead in the scenario of Markovian sampling.

\vspace{-.2cm}
\section{Numerical Experiments}\label{sec:num}\vspace{-.2cm}
\begin{figure*}[th]
    \centering
    \includegraphics[width=.29\linewidth]{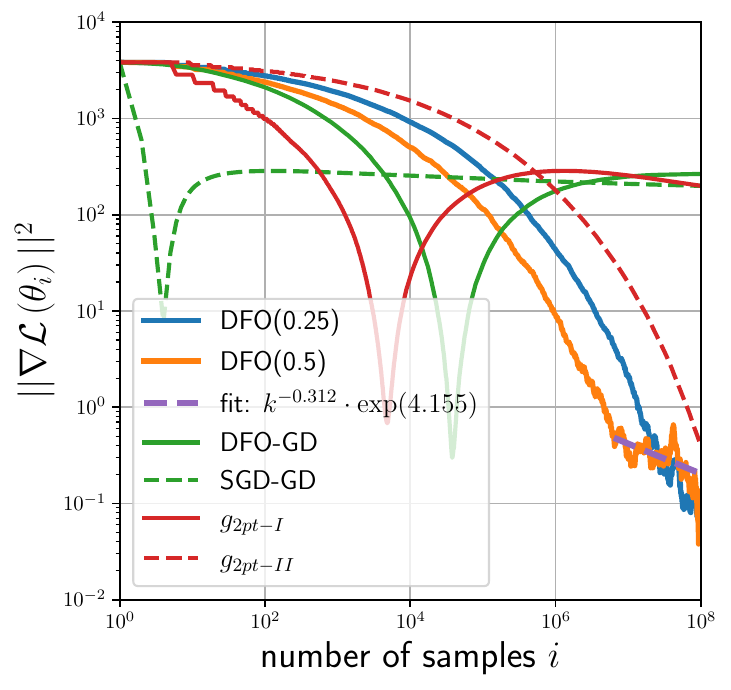}
    \includegraphics[width=.29\linewidth]{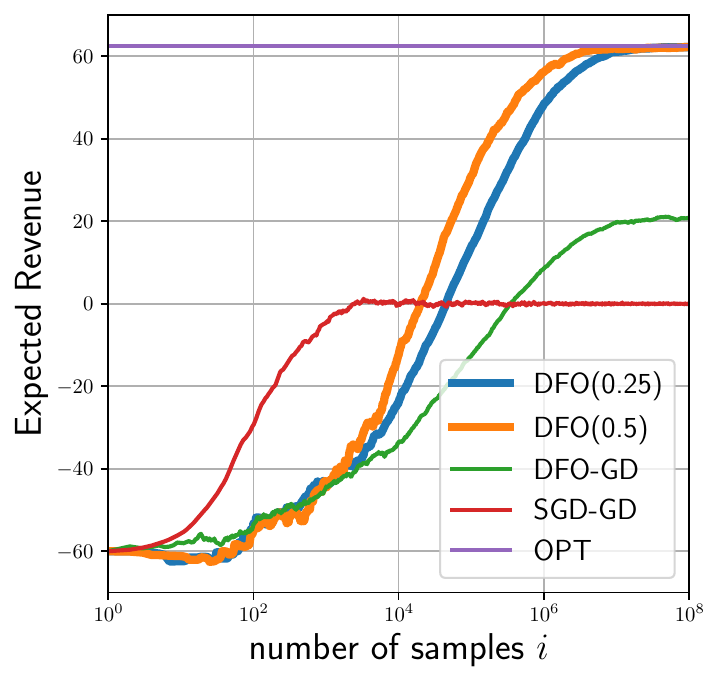}
    \includegraphics[width=.29\linewidth]{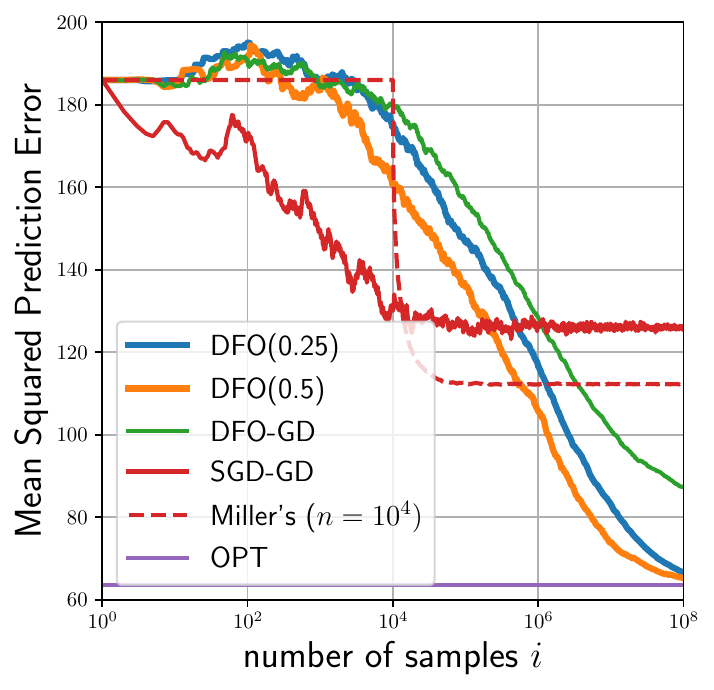}
    \caption{(\emph{left}) One Dimension Quartic Minimization problem with samples generated by AR distribution model where regressive parameter $\gamma=0.5.$ (\emph{middle}) Markovian Pricing Problem with $d=5$ dimension. (\emph{right}) Linear Regression problem based on AR distribution model ($\gamma=0.5$). }
    \label{fig:simulation}
\end{figure*}

We examine the efficacy of the {\algoname} algorithm on a few examples by comparing {\algoname} to several benchmarks. Unless otherwise specified, we use the step size choices in \eqref{eq:stepsize_thm} for {\algoname}. {All experiments are conducted on a server with an Intel Xeon 6318 CPU using Python 3.7.}\vspace{.1cm}
The expected performance are estimated using at least $10$ trials.

\textbf{1-Dimensional Case: Quartic Loss.}
The first example considers a scalar polynomial loss function $\ell : \RR \times \RR \to \RR$ defined by $\ell(\prm; z)=\frac{1}{12} z \prm (3\prm^2-8\prm-48)$. To simulate the controlled Markov chain scenario, the samples are generated dynamically according to an auto-regressive (AR) process $Z_{t+1}=(1-\gamma) Z_t + \gamma \bar{Z}_{t+1}$ with $\bar{Z}_{t+1} \sim {\cal N}(\prm, \frac{(2-\gamma)}{\gamma}\sigma^2)$ with parameter $\gamma\in (0,1)$. 
Note that the stationary distribution of the AR process is $\Pi_{\prm}={\cal N}(\prm, \sigma^2)$. As such, the performative risk function in this case is ${\cal L}(\prm)=\EE_{Z\sim\Pi_{\prm}}\left[\ell(\prm;Z)\right]=\frac{\prm^2}{12}(\prm^2-8\prm-48)$, which is quartic in $\prm$. 
Note that ${\cal L}(\prm)$ is not convex in $\prm$ and the set of stationary solution is $\{ \prm: \grd {\cal L}(\prm) = 0 \} = \{-2, 0, 4\}$, among which the optimal solution is $\prm_{PO} = \argmin_{ \prm } {\cal L}( \prm ) = 4$. 

In our experiments below, all the algorithms are initialized by $\prm_0= 6$.
In Figure \ref{fig:simulation} (left), we compare the norms of the gradient for performative risk with {\tt DFO}-{\tt GD} (no burn-in phase), the {\tt DFO}($\lambda$) algorithm, stochastic gradient descent with greedy deployment scheme (SGD-GD), and two 2-point estimators, ${\bm g}_{\sf 2pt-I},{\bm g}_{\sf 2pt-II}$ (with a burn-in phase) as described in Section \ref{sec:hardness} against the number of samples observed by  the algorithms.
We first observe from Figure \ref{fig:simulation} (left) that {\tt DFO}-{\tt GD} and {\tt SGD-GD} methods do not converge to a stationary point to ${\cal L}(\prm)$ even after more samples are observed. On the other hand, {\algoname} converges to a stationary point of ${\cal L}( \prm )$ at the rate of $ \| \grd {\cal L}( \prm ) \|^2 = {\cal O}(1/ S^{0.312} )$, matching Theorem~\ref{thm1} that predicts a rate of ${\cal O}(1/S^{1/3})$ (up to a logarithmic factor), where $S$ is the total number of samples observed.

Besides, with $\lambda = 0.5$, {\algoname} converges at a faster rate at the beginning (i.e., transient phase), but the fluctuation increases at the steady phase, as compared to a smaller $\lambda$ (e.g., $\lambda=0.25$). The estimator ${\bm g}_{\sf 2pt-I}$ is indeed biased, and converges to a non-stationary point to $\cal{L}(\prm)$. On the other hand, ${\bm g}_{\sf 2pt-II}$ converges to a stationary point, but uses twice as many samples as our one-point estimator for each update.

\textbf{Higher Dimension Case: Markovian Pricing.}
The second example examines a multi-dimensional ($d=5$) pricing problem similar to \citep[Sec.~5.2]{izzo2021learn}. The decision variable $\prm \in \RR^5$ denotes the prices of $d=5$ goods and $\kappa$ is a drifting parameter for the prices. Our goal is to maximize the \textit{Expected Revenue}, which is the opposite of performative risk $\EE_{Z\sim\Pi_{\prm} }[\ell(\prm;Z)]$ with $\ell(\prm;z)=-\pscal{\prm}{z}$, where $\Pi_{\prm} \equiv {\cal N}( {\bm \mu}_0-\kappa\prm,\sigma^2 {\bm I})$ is the unique stationary distribution of the Markov process
\vspace{-.1cm}
\[ \textstyle
Z_{t+1}=(1-\gamma) Z_{t} +\gamma \bar{Z}_{t+1}~\bar{Z}_{t+1} \sim {\cal N}( {\bm \mu}_0-\kappa \prm, \frac{2-\gamma}{\gamma}\sigma^2 {\bm I} ). \vspace{-.1cm}
\]
Note that in this case, the performative optimal solution is $\prm_{PO} = \argmin_{ \prm } {\cal L}( \prm ) = {\bm{\mu}_0}/{(2\kappa)}$.

We set $\gamma=0.1, \sigma=1$, drifting parameter $ \kappa=0.5,$ initial mean of non-shifted distribution ${\bm \mu}_0 = \left[5,-5,-5,5,-5\right]^\top$. All the algorithms are initialized by $\prm_0=\left[12,-12,12,-12,12\right]^\top$.
The performance of different algorithms in terms of expected revenue ($-\EE_{\Pi_{\prm}}\left[\ell(\prm; Z)\right]$) are contrasted in Figure \ref{fig:simulation} (middle), where {\tt OPT} denotes the optimal objective value.

Observe that {\algoname} algorithms converge to the highest expected reward, while the benchmarks {\tt SGD-GD} and {\tt DFO-GD} fail to find a solution with comparable performance. This is expected as the latter algorithms are at best guaranteed to converge to a performative stable solution. 

\textbf{Markovian Performative Regression.}
The last example considers linear regression problem in \citep{nagaraj2020least} which is a prototype problem for studying stochastic optimization with Markovian data. Such problems are rarely studied under both performativity and Markovianity.
Unlike the previous examples, this problem involves a pair of correlated r.v.s that follows a decision-dependent joint distribution. We adopt a setting similar to the regression example in \citep{izzo2021learn}, where $(X,Y) \sim \Pi_{\prm}$ with $X \sim {\cal N}(0, \sigma_1^2{\bm I}), Y|X \sim {\cal N} \left(\beta(\prm,X ), \sigma_2^2\right)$, $\beta(\prm, x)= \Pscal{x+\kappa\prm}{\prm_0}$. The loss function is the usual quadratic loss $\ell(\prm;x,y) = (\Pscal{x}{\prm}-y)^2$.
In this case, we define the \textit{Mean Squared Prediction Error} as the following performative risk:\vspace{-.1cm}\begin{align*}
 \textstyle 
{\cal L}(\prm) &=\EE_{\Pi_{\prm}}\left[\ell(\prm;X,Y)\right]=(\sigma_1^2+\kappa^2\norm{\prm_0}^2)\norm{\prm}^2 -2\sigma_1^2\Pscal{\prm}{\prm_0}+\sigma_1^2\norm{\prm_0}^2+\sigma_2^2,
\vspace{-.1cm}
\end{align*} 
In this experiment, we consider Markovian samples $(\Tilde{X}_{t},\Tilde{Y}_{t})_{t=1}^{T}$ drawn from the following AR process:\vspace{-.1cm}
\[
\begin{split}
& (\Tilde{X}_{t},\Tilde{Y}_{t}) = (1-\gamma)(\Tilde{X}_{t-1},\Tilde{Y}_{t-1})+ \gamma (X_{t},Y_{t}), \\
& X_{t} \sim {\cal N}(0, {\textstyle \frac{2-\gamma}{\gamma}} \sigma_1^2 \mathit{I}),~Y_{t}|X_t \sim {\cal N}(\beta(\prm_{t-1},X_{t}), {\textstyle \frac{2-\gamma}{\gamma}} \sigma_2^2),
\end{split}\vspace{-.1cm}
\] 
for any $t \geq 1$. We set $d=5$, $\prm_0=[5,-5,5,-5,5]^\top$, $\kappa=1/\norm{\prm_0}, \sigma_1^2=\sigma_2^2=1$, mixing parameter $\gamma=0.25$.
Figure \ref{fig:simulation} (right) shows the result of the simulation. 
Similar to the previous examples, we observe that {\tt DFO}-{\tt GD} and {\tt SGD} fail to find a stationary solution to ${\cal L}(\prm)$. Moreover, we include Miller's two-phase algorithm as in \cite{Miller2021OutsideTE}, where we change the minimization in the second phase to an SGD update, and it does not find the desired optimal objective value with $n=10^4$ (Markovian) sample gathered in the first phase. In contrast, {\algoname} converges to a near-optimal solution after a reasonable number of samples are observed.

\textbf{Conclusions.}
We have described a derivative-free optimization approach for finding a stationary point of the performative risk function. In particular, we consider a non-i.i.d.~data setting with samples generated from a controlled Markov chain and propose a two-timescale step sizes approach in constructing the gradient estimator. The proposed {\algoname} algorithm is shown to converge to a stationary point of the performative risk function at the rate of ${\cal O}(1/T^{1/3})$.

\bibliographystyle{plainnat}
\bibliography{reference.bib}

\begin{thebibliography}{31}
\providecommand{\natexlab}[1]{#1}
\providecommand{\url}[1]{\texttt{#1}}
\expandafter\ifx\csname urlstyle\endcsname\relax
  \providecommand{\doi}[1]{doi: #1}\else
  \providecommand{\doi}{doi: \begingroup \urlstyle{rm}\Url}\fi

\bibitem[Agarwal et~al.(2010)Agarwal, Dekel, and Xiao]{Agarwal2010}
Alekh Agarwal, Ofer Dekel, and Lin Xiao.
\newblock Optimal algorithms for online convex optimization with multi-point bandit feedback.
\newblock In \emph{Annual Conference Computational Learning Theory}, 2010.

\bibitem[Brown et~al.(2022)Brown, Hod, and Kalemaj]{brown2022performative}
Gavin Brown, Shlomi Hod, and Iden Kalemaj.
\newblock Performative prediction in a stateful world.
\newblock In \emph{International Conference on Artificial Intelligence and Statistics}, pages 6045--6061. PMLR, 2022.

\bibitem[Doan(2022)]{doan2022finite}
Thinh~T Doan.
\newblock Finite-time analysis of markov gradient descent.
\newblock \emph{IEEE Transactions on Automatic Control}, 2022.

\bibitem[Dong et~al.(2018)Dong, Roth, Schutzman, Waggoner, and Wu]{dong2018strategic}
Jinshuo Dong, Aaron Roth, Zachary Schutzman, Bo~Waggoner, and Zhiwei~Steven Wu.
\newblock Strategic classification from revealed preferences.
\newblock In \emph{Proceedings of the 2018 ACM Conference on Economics and Computation}, pages 55--70, 2018.

\bibitem[Dong et~al.(2023)Dong, Zhang, and Ratliff]{dong2023approximate}
Roy Dong, Heling Zhang, and Lillian Ratliff.
\newblock Approximate regions of attraction in learning with decision-dependent distributions.
\newblock In \emph{International Conference on Artificial Intelligence and Statistics}, pages 11172--11184. PMLR, 2023.

\bibitem[Drusvyatskiy and Xiao(2022)]{drusvyatskiy2022stochastic}
Dmitriy Drusvyatskiy and Lin Xiao.
\newblock Stochastic optimization with decision-dependent distributions.
\newblock \emph{Mathematics of Operations Research}, 2022.

\bibitem[Flaxman et~al.(2005)Flaxman, Kalai, and McMahan]{flaxman2005}
Abraham~D. Flaxman, Adam~Tauman Kalai, and H.~Brendan McMahan.
\newblock Online convex optimization in the bandit setting: Gradient descent without a gradient.
\newblock In \emph{Proceedings of the Sixteenth Annual ACM-SIAM Symposium on Discrete Algorithms}, SODA '05, page 385–394, USA, 2005. Society for Industrial and Applied Mathematics.
\newblock ISBN 0898715857.

\bibitem[Ghadimi and Lan(2013)]{ghadimi2013}
Saeed Ghadimi and Guanghui Lan.
\newblock Stochastic first- and zeroth-order methods for nonconvex stochastic programming, 2013.
\newblock URL \url{https://arxiv.org/abs/1309.5549}.

\bibitem[Hardt et~al.(2016)Hardt, Megiddo, Papadimitriou, and Wootters]{hardt2016strategic}
Moritz Hardt, Nimrod Megiddo, Christos Papadimitriou, and Mary Wootters.
\newblock Strategic classification.
\newblock In \emph{Proceedings of the 2016 ACM conference on innovations in theoretical computer science}, pages 111--122, 2016.

\bibitem[Izzo et~al.(2021)Izzo, Ying, and Zou]{izzo2021learn}
Zachary Izzo, Lexing Ying, and James Zou.
\newblock How to learn when data reacts to your model: Performative gradient descent.
\newblock In Marina Meila and Tong Zhang, editors, \emph{Proceedings of the 38th International Conference on Machine Learning}, volume 139 of \emph{Proceedings of Machine Learning Research}, pages 4641--4650. PMLR, 18--24 Jul 2021.
\newblock URL \url{https://proceedings.mlr.press/v139/izzo21a.html}.

\bibitem[Izzo et~al.(2022)Izzo, Zou, and Ying]{izzo2022learn}
Zachary Izzo, James Zou, and Lexing Ying.
\newblock How to learn when data gradually reacts to your model.
\newblock In \emph{International Conference on Artificial Intelligence and Statistics}, pages 3998--4035. PMLR, 2022.

\bibitem[Jamieson et~al.(2012)Jamieson, Nowak, and Recht]{Jamieson2012}
Kevin~G Jamieson, Robert Nowak, and Ben Recht.
\newblock Query complexity of derivative-free optimization.
\newblock In F.~Pereira, C.J. Burges, L.~Bottou, and K.Q. Weinberger, editors, \emph{Advances in Neural Information Processing Systems}, volume~25. Curran Associates, Inc., 2012.
\newblock URL \url{https://proceedings.neurips.cc/paper/2012/file/e6d8545daa42d5ced125a4bf747b3688-Paper.pdf}.

\bibitem[Karimi et~al.(2019)Karimi, Miasojedow, Moulines, and Wai]{karimi2019non}
Belhal Karimi, Blazej Miasojedow, Eric Moulines, and Hoi-To Wai.
\newblock Non-asymptotic analysis of biased stochastic approximation scheme.
\newblock In \emph{Conference on Learning Theory}, pages 1944--1974. PMLR, 2019.

\bibitem[Li and Wai(2022)]{li2022state}
Qiang Li and Hoi-To Wai.
\newblock State dependent performative prediction with stochastic approximation.
\newblock In \emph{International Conference on Artificial Intelligence and Statistics}, pages 3164--3186. PMLR, 2022.

\bibitem[Mendler-D{\"u}nner et~al.(2020)Mendler-D{\"u}nner, Perdomo, Zrnic, and Hardt]{mendler2020stochastic}
Celestine Mendler-D{\"u}nner, Juan Perdomo, Tijana Zrnic, and Moritz Hardt.
\newblock Stochastic optimization for performative prediction.
\newblock \emph{Advances in Neural Information Processing Systems}, 33:\penalty0 4929--4939, 2020.

\bibitem[Miller et~al.(2021)Miller, Perdomo, and Zrnic]{Miller2021OutsideTE}
John Miller, Juan~C. Perdomo, and Tijana Zrnic.
\newblock Outside the echo chamber: Optimizing the performative risk.
\newblock In \emph{International Conference on Machine Learning}, 2021.

\bibitem[Mofakhami et~al.(2023)Mofakhami, Mitliagkas, and Gidel]{mofakhami2023performative}
Mehrnaz Mofakhami, Ioannis Mitliagkas, and Gauthier Gidel.
\newblock Performative prediction with neural networks.
\newblock In \emph{International Conference on Artificial Intelligence and Statistics}, pages 11079--11093. PMLR, 2023.

\bibitem[Nagaraj et~al.(2020)Nagaraj, Wu, Bresler, Jain, and Netrapalli]{nagaraj2020least}
Dheeraj Nagaraj, Xian Wu, Guy Bresler, Prateek Jain, and Praneeth Netrapalli.
\newblock Least squares regression with markovian data: Fundamental limits and algorithms.
\newblock \emph{Advances in neural information processing systems}, 33:\penalty0 16666--16676, 2020.

\bibitem[Narang et~al.(2022)Narang, Faulkner, Drusvyatskiy, Fazel, and Ratliff]{narang2022multiplayer}
Adhyyan Narang, Evan Faulkner, Dmitriy Drusvyatskiy, Maryam Fazel, and Lillian~J Ratliff.
\newblock Multiplayer performative prediction: Learning in decision-dependent games.
\newblock \emph{arXiv preprint arXiv:2201.03398}, 2022.

\bibitem[Nemirovskiĭ(1983)]{Nemirovski1983}
Arkadiĭ~Semenovich. Nemirovskiĭ.
\newblock \emph{Problem complexity and method efficiency in optimization}.
\newblock Wiley series in discrete mathematics. Wiley, Chichester, 1983.

\bibitem[Nesterov and Spokoiny(2017)]{Nesterov2017}
Yurii Nesterov and Vladimir~G. Spokoiny.
\newblock Random gradient-free minimization of convex functions.
\newblock \emph{Foundations of Computational Mathematics}, 17:\penalty0 527--566, 2017.

\bibitem[Perdomo et~al.(2020)Perdomo, Zrnic, Mendler-D{\"u}nner, and Hardt]{perdomo2020performative}
Juan Perdomo, Tijana Zrnic, Celestine Mendler-D{\"u}nner, and Moritz Hardt.
\newblock Performative prediction.
\newblock In \emph{International Conference on Machine Learning}, pages 7599--7609. PMLR, 2020.

\bibitem[Ray et~al.(2022)Ray, Ratliff, Drusvyatskiy, and Fazel]{ray2022decision}
Mitas Ray, Lillian~J Ratliff, Dmitriy Drusvyatskiy, and Maryam Fazel.
\newblock Decision-dependent risk minimization in geometrically decaying dynamic environments.
\newblock In \emph{Proceedings of the AAAI Conference on Artificial Intelligence}, pages 8081--8088, 2022.

\bibitem[Robert et~al.(1999)Robert, Casella, and Casella]{robert1999monte}
Christian~P Robert, George Casella, and George Casella.
\newblock \emph{Monte Carlo statistical methods}, volume~2.
\newblock Springer, 1999.

\bibitem[Roy et~al.(2022)Roy, Balasubramanian, and Ghadimi]{roy2022projection}
Abhishek Roy, Krishnakumar Balasubramanian, and Saeed Ghadimi.
\newblock Projection-free constrained stochastic nonconvex optimization with state-dependent markov data.
\newblock In \emph{Advances in neural information processing systems}, 2022.

\bibitem[Sun and Li(2019)]{sun2019decentralized}
Tao Sun and Dongsheng Li.
\newblock Decentralized markov chain gradient descent.
\newblock \emph{arXiv preprint arXiv:1909.10238}, 2019.

\bibitem[Sun et~al.(2018)Sun, Sun, and Yin]{sun2018markov}
Tao Sun, Yuejiao Sun, and Wotao Yin.
\newblock On markov chain gradient descent.
\newblock \emph{Advances in neural information processing systems}, 31, 2018.

\bibitem[Wood and Dall’Anese(2023)]{wood2023stochastic}
Killian Wood and Emiliano Dall’Anese.
\newblock Stochastic saddle point problems with decision-dependent distributions.
\newblock \emph{SIAM Journal on Optimization}, 33\penalty0 (3):\penalty0 1943--1967, 2023.

\bibitem[Wu et~al.(2020)Wu, Zhang, Xu, and Gu]{wu2020finite}
Yue~Frank Wu, Weitong Zhang, Pan Xu, and Quanquan Gu.
\newblock A finite-time analysis of two time-scale actor-critic methods.
\newblock \emph{Advances in Neural Information Processing Systems}, 33:\penalty0 17617--17628, 2020.

\bibitem[Zhang et~al.(2018)Zhang, Dai, Dong, Qi, Zhang, Liu, Liu, and Yang]{Zhang2018PriceProm}
Dennis~J. Zhang, Hengchen Dai, Lingxiu Dong, Fangfang Qi, Nannan Zhang, Xiaofei Liu, Zhongyi Liu, and Jiang Yang.
\newblock How do price promotions affect customer behavior on retailing platforms? evidence from a large randomized experiment on alibaba.
\newblock \emph{Production and Operations Management}, 2018.

\bibitem[Zhu et~al.(2023)Zhu, Fang, and Yang]{zhu2023online}
Zihan Zhu, Ethan~X Fang, and Zhuoran Yang.
\newblock Online performative gradient descent for learning nash equilibria in decision-dependent games.
\newblock In \emph{Thirty-seventh Conference on Neural Information Processing Systems}, 2023.

\end{thebibliography}

\newpage 
\appendix 
\section{Comparison to Related Works}\label{ass:lit}
This section provides a detailed comparison to related works on performative prediction under the stateful agent setting. This setting is relevant as the influences of the updated $\prm$ on the agent may not be manifested immediately due to the unforgetful nature of the agent. 
The recent works can be grouped into two categories in terms of the sought solution to \eqref{perf}: (i) finding the \emph{performative stable} solution satisfying $\prm_{PS} = \argmin_{ \prm \in \RR^d } \EE_{ Z \sim \Pi_{\prm_{PS}} } [ \ell( \prm; Z ) ]$, (ii) finding or approximating the \emph{performative optimal} solution that tackles \eqref{perf} directly. 

For seeking the \emph{performative stable} solution, 
\cite{brown2022performative} is the first to study population-based algorithms where the stateful agent updates the state-dependent distribution iteratively towards $\Pi_{\prm}$. The authors proved that under a special case when $k$ groups that form the mixture distribution for $\Pi_{\prm}$ respond slowly, then classical retraining algorithms converge to the performative stable solution. 

The follow-up works \citep{li2022state,roy2022projection} focus on more sophisticated stateful  agents and the reliance on past experiences of agents via controlled Markov Chain. In \citep{li2022state}, the authors developed gradient-type state-dependent stochastic approximation algorithm to achieve performative stable solution. In \citep{roy2022projection}, the authors proposed a stochastic conditional gradient-type algorithm with state-dependent Markovian data to tackle constrained nonconvex performative prediction problem. 

The search for (approximate) \emph{performative optimal} solution is challenging due to the non-convexity of \eqref{perf}. \citet{izzo2022learn} assumes that the transient distribution is parameterized by a low-dimensional vector and the distribution converges to $\Pi_{\prm}$ geometrically. Under these settings, the authors proposed to learn the distribution as a linear model to form an unbiased estimate of $\grd {\cal L}(\prm)$. The resultant algorithm follows a two-phases update approach: it first estimate the gradient correction term [cf.~second term in \eqref{eq:all-gradient}], followed by stochastic gradient update steps. 
Such approach has two main drawbacks: (i) estimating the gradient correction term requires strong prior assumptions on the distribution map (see e.g. Assumption 2 and 3 in \citep{izzo2022learn}), which limits its applicability, (ii) the estimation phase gathers a substantial amount of potentially sensitive information from data reaction patterns, which may incur privacy concern. Furthermore, it is noted that such procedure has a convergence rate of ${\cal O}(T^{-1/5})$ to stationary solution of ${\cal L}(\prm)$, which is outperformed by DFO in the current paper.

As mentioned in the main paper, adopting the DFO setting avoids the need to estimate the gradient correction term, which may necessitate additional assumptions on $\Pi_{\prm}$ as seen in \citep{izzo2022learn}. To this end, one of the first works to address performative optimal points with DFO method is \citep{ray2022decision} in the stateful agent setting. Notably, the analysis in \cite{ray2022decision} relies on (i) a mixture dominance assumption on $\Pi_{\prm}$, and (ii) a geometric decay environment assumption on the stateful agent.
In addition to relaxing the mixture dominance assumption, we remark that Assumption~\ref{assu:FastMixing} is relaxed from the geometric decay environment condition in \cite{ray2022decision}. For example, our setting covers general MDP models and the controlled AR(1) model, see \citep[Appendix A.1]{li2022state}.

\section{Proof of Lemma~\ref{lem:decomposition}} \label{ap:decompose}
\begin{proof}
Throughout this section, we let $\cprm_{k}\eqdef \prm_{k}+\delta_{k}u_{k}, g_{k}(\prm;u,z)\eqdef g_{\delta_{k}}(\prm;u,z)$ and ${\cal L}_{k}(\prm)\eqdef {\cal L}_{\delta_{k}}(\prm)$ for simplicity. We begin our analysis from Assumption \ref{assu:Lip} and the observation that $\prm_{k+1}-\prm_{k} = -\eta_{k} \sum_{m=1}^{\tau_{k}}\lambda^{\tau_{k}-m}g_{k}^{(m)}$. Recall that $g_{k}^{(m)}=\frac{d}{\delta_{k}}\ell\left(\check{\prm}_{k}^{(m)}; z_{k}^{(m)} \right)u_{k}$ and $\check{\prm}_{k}^{(m)} = \prm_{k}^{(m)} + \delta_{k} u_{k}$, we have
\[
    {\cal L}(\prm_{k+1}) - {\cal L}(\prm_{k}) + \eta_{k}\Pscal{\grd {\cal L}(\prm_{k})}{ \sum_{m=1}^{\tau_{k}}\lambda^{\tau_{k}-m}g_{k}^{(m)}} \leq \frac{L}{2}\eta_{k}^2\norm{\sum_{m=1}^{\tau_{k}}\lambda^{\tau_{k}-m}g_{k}^{(m)}}^2,
\]
Rearranging terms and adding $\frac{\eta_{k}}{1-\lambda}\norm{\grd {\cal L}(\prm_k)}^2$ on the both sides lead to
\begin{align*}
    \frac{\eta_{k}}{1-\lambda}\norm{\grd {\cal L}(\prm_k)}^2 &\leq {\cal L}(\prm_{k}) - {\cal L}(\prm_{k+1}) -\frac{\eta_{k}}{1-\lambda} \Pscal{\grd {\cal L}(\prm_{k})}{(1-\lambda)\sum_{m=1}^{\tau_{k}}\lambda^{\tau_{k}-m}g_{k}^{(m)}-\grd {\cal L}(\prm_k)}  + \frac{L}{2}\eta_{k}^2\norm{\sum_{m=1}^{\tau_{k}}\lambda^{\tau_{k}-m}g_{k}^{(m)}}^2
\end{align*}
Let ${\cal F}^k = \sigma( \prm_0, Z_{s}^{(m)}, u_s, 0 \leq s \leq k, 0 \leq m \leq \tau_{k})$ be the filtration of random variables. Taking expectation conditioned on ${\cal F}^{k-1}$
gives
\begin{align*}
    \frac{\eta_{k}}{1-\lambda}\norm{\grd {\cal L}(\prm_k)}^2 \leq &\EE_{{\cal F}^{k-1}}\left[{\cal L}(\prm_{k}) - {\cal L}(\prm_{k+1}) \right]
    \\
    &-\frac{\eta_{k}}{1-\lambda} \Pscal{\grd {\cal L}(\prm_k)}{(1-\lambda)\sum_{m=1}^{\tau_{k}}\lambda^{\tau_{k}-m}
    \EE_{{\cal F}^{k-1}}\left[g_{k}^{(m)}\right]
    -\grd {\cal L}(\prm_k)} 
    \\
    &+ \frac{L}{2}\eta_{k}^2\EE_{{\cal F}^{k-1}}\norm{\sum_{m=1}^{\tau_{k}}\lambda^{\tau_{k}-m}g_{k}^{(m)}}^2,
\end{align*}
By adding and subtracting, we obtain
\begin{align*}
    \frac{\eta_{k}}{1-\lambda}\norm{\grd {\cal L}(\prm_k)}^2 &\leq \EE_{{\cal F}^{k-1}}\left[{\cal L}(\prm_{k}) - {\cal L}(\prm_{k+1}) \right]
    \\
    &\quad -\frac{\eta_{k}}{1-\lambda} \Pscal{\grd {\cal L}(\prm_k)}{(1-\lambda)\sum_{m=1}^{\tau_{k}}\lambda^{\tau_{k}-m}\left(\EE_{{\cal F}^{k-1}}\left[g_{k}^{(m)}\right]-\EE_{Z\sim\Pi_{\cprm_k}, {\cal F}^{k-1}}\left[g_{k}(\prm_{k};u_k, Z)\right]\right)}
    \\
    &\quad -\frac{\eta_{k}}{1-\lambda} \Pscal{\grd {\cal L}(\prm_k)}{(1-\lambda)\sum_{m=1}^{\tau_{k}}\lambda^{\tau_{k}-m}\EE_{Z\sim\Pi_{\cprm_k},{\cal F}^{k-1}}\left[g_{k}(\prm_{k};u_k, Z)\right]-\grd {\cal L}(\prm_k)}
    \\
    &\quad + \frac{L}{2}\eta_{k}^2\EE_{{\cal F}^{k-1}}\norm{\sum_{m=1}^{\tau_{k}}\lambda^{\tau_{k}-m}g_{k}^{(m)}}^2
\end{align*}

By Lemma \ref{lem:BoundedBias}, the conditional expectation evaluates to $\EE_{Z\sim\Pi_{\cprm_k}}\left[g_{k}(\prm_{k};u_k, Z)\right]=\grd {\cal L}_{k}(\prm_{k})$. Dividing $\frac{\eta_{k}}{1-\lambda}$ derive that
\begin{align*}
    \norm{\grd {\cal L}(\prm_k)}^2 \leq &\frac{1-\lambda}{\eta_{k}} \EE_{{\cal F}^{k-1}}\left({\cal L}(\prm_{k}) - {\cal L}(\prm_{k+1}) \right)
    \\
    &-\Pscal{\grd {\cal L}(\prm_k)}{(1-\lambda)\sum_{m=1}^{\tau_{k}}\lambda^{\tau_{k}-m}\left(\EE_{{\cal F}^{k-1}}\left[g_{k}^{(m)}\right]-\EE_{{\cal F}^{k-1}}\EE_{Z\sim\Pi_{\cprm_k}}\left[g_{k}(\prm_{k};u_k,Z)|u_k\right]\right)}
    \\
    &-\Pscal{\grd {\cal L}(\prm_k)}{(1-\lambda)\left(\sum_{m=1}^{\tau_{k}}\lambda^{\tau_{k}-m}\grd {\cal L}_{k}(\prm_{k})\right)-\grd {\cal L}(\prm_k)}
    \\
    &+ \frac{L(1-\lambda)}{2}\eta_{k}\EE_{{\cal F}^{k-1}}\norm{\sum_{m=1}^{\tau_{k}}\lambda^{\tau_{k}-m}g_{k}^{(m)}}^2
\end{align*}
Summing over $k$ from 0 to $t$, indeed we obtain
\begin{align*}
    \sum_{k=0}^{t}\EE\norm{\grd {\cal L}(\prm_k)}^2 &\leq\sum_{k=0}^{t}\frac{1-\lambda}{\eta_{k}}\EE\left[{\cal L}(\prm_k)-{\cal L}(\prm_{k+1})\right]
    \\
    &\quad -\sum_{k=0}^{t}\EE\Pscal{\grd {\cal L}(\prm_k)}{(1-\lambda)\sum_{m=1}^{\tau_{k}}\lambda^{\tau_{k}-m}\left(\EE_{{\cal F}^{k-1}}\left[ g_{k}^{(m)}\right]-\EE_{{\cal F}^{k-1}}\EE_{Z\sim\Pi_{\cprm_k}}\left[g_{k}(\prm_{k};u_k,Z)|u_k\right]\right)}
    \\
    &\quad -\sum_{k=0}^{t}\EE\Pscal{\grd {\cal L}(\prm_k)}{(1-\lambda)\left(\sum_{m=1}^{\tau_{k}}\lambda^{\tau_{k}-m}\grd {{\cal L}_{k}(\prm_k)}\right)-\grd {\cal L}(\prm_k)}
    \\
    &\quad +\frac{L(1-\lambda)}{2}\sum_{k=0}^{t}\eta_{k}\EE \norm{\sum_{m=1}^{\tau_{k}}\lambda^{\tau_{k}-m}g_{k}^{(m)}}^2 \eqdef \term{1} + \term{2} + \term{3} + \term{4}
\end{align*}
\end{proof}

\section{Proof of Lemma~\ref{lem:bound_four_terms}}\label{App_bound_four_terms}
\begin{lemma}
\label{lem:bound_term_1}
Under Assumption \ref{assu:BoundLoss} and step size $\eta_{t}=\eta_{0}(1+t)^{-\alpha}$, it holds that
\begin{equation}
    {\bf I}_{1}(t)\leq c_1 (1-\lambda)(1+t)^{\alpha}
\end{equation}
\end{lemma}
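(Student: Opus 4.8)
The plan is to treat ${\bf I}_1(t) = (1-\lambda)\sum_{k=1}^{t}\eta_k^{-1}\big(\EE[{\cal L}(\prm_k)] - \EE[{\cal L}(\prm_{k+1})]\big)$ as a weighted, non-telescoping sum and to control it by Abel summation (summation by parts), exploiting two facts: that the weights $\eta_k^{-1}$ are monotone increasing, and that ${\cal L}$ is uniformly bounded. First I would observe that Assumption~\ref{assu:BoundLoss} gives $|{\cal L}(\prm)| = |\EE_{Z\sim\Pi_\prm}[\ell(\prm;Z)]| \le \G$ for every $\prm$, so the sequence $a_k \eqdef \EE[{\cal L}(\prm_k)]$ satisfies $|a_k|\le \G$ uniformly in $k$. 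Setting $w_k \eqdef 1/\eta_k = (1+k)^\alpha/\eta_0$, the hypothesis $\alpha>0$ makes $k\mapsto w_k$ nondecreasing, hence $w_k - w_{k-1}\ge 0$; this monotonicity is the crucial structural feature that makes the estimate work despite the step sizes varying.

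Next I would apply summation by parts to $\sum_{k=1}^{t} w_k(a_k - a_{k+1})$. Reindexing the shifted term $\sum_{k=1}^{t} w_k a_{k+1} = \sum_{k=2}^{t+1} w_{k-1} a_k$ and collecting, one obtains the identity $\sum_{k=1}^{t} w_k(a_k - a_{k+1}) = w_1 a_1 + \sum_{k=2}^{t}(w_k - w_{k-1})a_k - w_t a_{t+1}$. Because each increment $w_k - w_{k-1}$ is nonnegative, the middle terms are bounded in absolute value by $(w_k-w_{k-1})\G$, while the two boundary terms are bounded by $w_1\G$ and $w_t\G$ respectively via $|a_1|,|a_{t+1}|\le \G$.

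Then I would telescope the surviving increments, $\sum_{k=2}^{t}(w_k - w_{k-1}) = w_t - w_1$, which yields $\sum_{k=1}^{t} w_k(a_k - a_{k+1}) \le \G\big(w_1 + (w_t - w_1) + w_t\big) = 2\G w_t = 2\G(1+t)^\alpha/\eta_0$. Multiplying through by $(1-\lambda)$ gives ${\bf I}_1(t) \le \tfrac{2\G}{\eta_0}(1-\lambda)(1+t)^\alpha = c_1(1-\lambda)(1+t)^\alpha$ with $c_1 = 2\G/\eta_0$, which is exactly the claimed bound and constant.

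I do not anticipate a genuine obstacle here, as this is essentially a bookkeeping lemma. The only point requiring care is that the weights $\eta_k^{-1}$ vary with $k$, so $\sum_k \eta_k^{-1}(a_k-a_{k+1})$ is \emph{not} a clean telescope; the weighted summation-by-parts, combined with the monotonicity $w_k\ge w_{k-1}$, absorbs precisely this obstruction, and the uniform bound $|{\cal L}|\le\G$ closes the estimate. The main step to verify carefully is the boundary bookkeeping in the Abel identity, ensuring that the two surviving boundary terms combine with the telescoped increments to produce the factor $2w_t$ rather than a larger quantity.
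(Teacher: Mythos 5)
Your proof is correct and follows essentially the same route as the paper's: both rewrite the weighted sum via Abel summation (the paper does this by adding and subtracting $\EE[{\cal L}(\prm_{k+1})]/\eta_{k+1}$), exploit the monotonicity of the weights $1/\eta_k$ to telescope the nonnegative increments, and close with the uniform bound $|{\cal L}(\prm)|\leq \G$ from Assumption~\ref{assu:BoundLoss}, arriving at the same constant $c_1 = 2\G/\eta_0$.
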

where constant $c_1=\frac{2 \G}{\eta_0}$.
\begin{proof}

We observe the following chain
\begin{align*}
    \term{1} &= \sum_{k=0}^{t}\frac{1-\lambda}{\eta_{k}}\left(\EE\left[{\cal L}(\prm_k)\right]-\EE\left[{\cal L}(\prm_{k+1})\right]\right)
    \\
    &= (1-\lambda)\sum_{k=0}^{t} \EE[{\cal L}(\prm_{k})]/\eta_k-\EE[{\cal L}(\prm_{k+1})]/\eta_{k+1}+\EE[{\cal L}(\prm_{k+1})]/\eta_{k+1}-\EE[{\cal L}(\prm_{k+1})]/\eta_{k}
    \\
    &\overset{(a)}{=}(1-\lambda)\left[\EE[{\cal L}(\prm_0)/\eta_{0}]-\EE[{\cal L}(\prm_{t+1})/\eta_{t+1}]+\sum_{k=0}^{t}(\frac{1}{\eta_{k+1}}-\frac{1}{\eta_{k}})\EE[{\cal L}(\prm_{k+1})]\right]
    \\
    &\leq (1-\lambda)\max_{k} \left|\EE[{\cal L}(\prm_{k})]\right| \left(\frac{1}{\eta_{0}}+\frac{1}{\eta_{t+1}}+\frac{1}{\eta_{t+1}}-\frac{1}{\eta_{0}}\right)
\end{align*}
where equality (a) is obtained using the fact that step size $\eta_k>0$ is a decreasing sequence. Applying assumption \ref{assu:BoundLoss} to the last inequality leads to
\begin{align*}
    \term{1}
    &\leq (1-\lambda ) \G \frac{2}{\eta_{t+1}} \leq c_{1} (1-\lambda) (1+t)^{\alpha}
\end{align*}
where the constant $c_1=\frac{2 \G}{\eta_0}$.

\end{proof}

\begin{lemma}
\label{lem:bound_term_2}
Under Assumption
\ref{assu:Lip},
\ref{assu:BoundLoss},
\ref{assu:smooth_dist},
\ref{assu:FastMixing}, \ref{assu:smooth_kernel}, and constraint $0<2\alpha-4\beta<1$, and for all $k\geq 0 $, $\tau_{k}\geq\frac{1}{\log 1/\max\{\rho,\lambda\}}\log(1+k)$, then there exists universal constants $t_1,t_2>0$ such that
\begin{equation}
    {\bf I}_{2}(t) \leq c_2 \frac{d^2}{(1-\lambda)^2}{{\cal A}(t)}^{1/2}(1+t)^{1-(\alpha-2\beta)} \quad \forall t\geq \max\{t_1,t_2\}
\end{equation}
where ${\cal A}(t) \eqdef \frac{1}{1+t} \sum_{k=0}^{t} \EE\norm{\grd {\cal L}(\prm_k)}^2$ and $c_2 \eqdef \frac{\eta_0}{\delta_0^2}\frac{6\cdot (L_1\G^2+L_2\G^2+\sqrt{L}\G^{3/2} )}{\sqrt{1-2\alpha+4\beta}}$ is a constant.
\end{lemma}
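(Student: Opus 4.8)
The plan is to bound $\term{2}$ by tracking the bias $\Delta_{k,m}$ through the three-term decomposition $A_1 + A_2 + A_3$ already introduced in the proof outline, and then summing the resulting per-step bounds against the forgetting weights $\lambda^{\tau_k - m}$. First I would start from the estimate \eqref{eq:aa}, namely $\frac{\term{2}}{1-\lambda} \leq \EE[ \sum_{k} \norm{\grd {\cal L}(\prm_k)} \|\sum_m \lambda^{\tau_k - m} \Delta_{k,m}\| ]$, so that the entire task reduces to producing a uniform bound on $\norm{\Delta_{k,m}}$ and then applying Cauchy--Schwarz to separate the gradient-norm factor (which assembles into ${\cal A}(t)^{1/2}$) from the accumulated bias factor.

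Next I would bound each of $A_1, A_2, A_3$ separately. For $A_3$, I would use Assumption~\ref{assu:smooth_dist} (Lipschitz stationary distribution map) together with the bounded-loss Assumption~\ref{assu:BoundLoss}: since $|\ell| \leq \G$, the gap $\EE_{Z \sim \Pi_{\cprm_k}}[\ell(\cprm_k^{(m)};Z) - \ell(\cprm_k;Z)]$ is controlled by $\G \cdot L_1 \norm{\cprm_k^{(m)} - \cprm_k}$, which in turn is the intra-epoch drift $\norm{\prm_k^{(m)} - \prm_k}$. The crucial point is that the forgetting factor $\lambda$ keeps this drift at the order of a single update $\eta_k \, d / \delta_k$, because $\prm_k^{(m)} = \prm_k - \eta_k \sum_{j<m} \lambda^{\tau_k - j} g_k^{(j)}$ and each $\norm{g_k^{(j)}} \leq d\G/\delta_k$, and the geometric weights sum to $O(1/(1-\lambda))$. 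For $A_2$, I would invoke the geometric mixing Assumption~\ref{assu:FastMixing}: since the reference chain \eqref{chain:ref} uses the frozen kernel $\TT_{\cprm_k}$ and is compared to its stationary law $\Pi_{\cprm_k}$, the total-variation gap decays like $M\rho^m$, yielding a factor $\frac{d}{\delta_k}\G M \rho^m$. For $A_1$, I would compare the real chain \eqref{chain:real} (kernel varying through $\cprm_{k+1}^{(j)}$) to the reference chain \eqref{chain:ref} (frozen kernel), using the kernel-smoothness Assumption~\ref{assu:smooth_kernel}: a telescoping/coupling argument over the $m$ transitions bounds the TV distance by $L_2 \sum_{j} \norm{\cprm_{k+1}^{(j)} - \cprm_k}$, again controlled by the intra-epoch drift that $\lambda$ keeps small, producing the $\sqrt{L}\G^{3/2}$-type contribution after combining with the descent geometry.

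The main obstacle I expect is handling $A_1$ and the weighted sum $\sum_m \lambda^{\tau_k - m}$ simultaneously in a way that yields the correct exponent $(1+t)^{1-(\alpha-2\beta)}$ and the correct $(1-\lambda)^{-2}$ dependence. The geometric-mixing term $A_2$ decays in $m$ like $\rho^m$, so weighting by $\lambda^{\tau_k - m}$ and summing requires the choice $\tau_k \geq \frac{1}{\log(1/\max\{\rho,\lambda\})}\log(1+k)$ precisely so that the tail $\rho^{\tau_k}$ (and $\lambda^{\tau_k}$) is summable and of order $(1+k)^{-1}$; I would need to verify that this burn-in length makes the residual mixing bias negligible relative to the dominant drift terms. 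Assembling the three pieces, each $\norm{\Delta_{k,m}}$ carries a $d/\delta_k = d(1+k)^\beta/\delta_0$ prefactor and a drift factor of order $\eta_k d/\delta_k = \eta_0 d (1+k)^{-\alpha+\beta}/\delta_0$, so one power of $\norm{\Delta_{k,m}}$ is $O(d^2 (1+k)^{2\beta - \alpha})$; summing over $k$ against $\norm{\grd{\cal L}(\prm_k)}$ and invoking Cauchy--Schwarz converts $\sum_k (1+k)^{2\beta-\alpha}\norm{\grd{\cal L}(\prm_k)}$ into ${\cal A}(t)^{1/2}(1+t)^{1-(\alpha-2\beta)}$ via the bound $\sum_{k=0}^t (1+k)^{2(2\beta-\alpha)} = O((1+t)^{1-2(\alpha-2\beta)})$, where the condition $2\alpha - 4\beta < 1$ guarantees this sum grows rather than converges. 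The factors of $(1-\lambda)^{-1}$ arise from the geometric sums in the drift control and combine to give $(1-\lambda)^{-2}$; the constants $L_1\G^2, L_2\G^2, \sqrt{L}\G^{3/2}$ in $c_2$ correspond exactly to the $A_3$, $A_1$, and drift-induced contributions respectively. The constants $t_1, t_2$ arise from ensuring the burn-in and the asymptotic regime of the power-sum bounds are valid.
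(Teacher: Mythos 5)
Your overall architecture --- the frozen-kernel reference chain, the three-term decomposition $A_1+A_2+A_3$ of $\Delta_{k,m}$, drift control through the $\lambda$-weighted geometric sums, the choice of $\tau_k$ so that $\max\{\rho,\lambda\}^{\tau_k}\leq (1+k)^{-1}$ kills the mixing residual, and Cauchy--Schwarz to extract ${\cal A}(t)^{1/2}$ against the power sum $\sum_k \eta_k^2/\delta_k^4$ --- is exactly the paper's proof (Lemma~\ref{lem:bound_term_2}). The genuine gap is in your treatment of $A_3$. You claim that Assumption~\ref{assu:BoundLoss} (bounded loss) together with Assumption~\ref{assu:smooth_dist} (TV-Lipschitz distribution map) controls $A_3$ by $\G L_1 \norm{\cprm_k^{(m)}-\cprm_k}$. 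But $A_3$ is built from $\EE_{Z\sim\Pi_{\cprm_k}}\left[\ell(\cprm_k^{(m)};Z)-\ell(\cprm_k;Z)\right]$, which holds the distribution \emph{fixed} and varies only the parameter of the loss; Assumption~\ref{assu:smooth_dist} has no distribution shift to act on here. Moreover, under Assumptions~\ref{assu:Lip}--\ref{assu:smooth_kernel} the loss $\ell(\cdot;z)$ is \emph{not} assumed Lipschitz in $\prm$ (pointwise Lipschitzness is Assumption~\ref{assu:lip_ell}, invoked only in the non-smooth analysis), so boundedness of $\ell$ alone only yields the useless bound $2\G$, not a drift-proportional one. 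Your stated mechanism therefore does not produce the bound you need.

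The fix is the paper's Lemma~\ref{lem:lip_decoupled_risk} (Lipschitz continuity of the decoupled risk), which detours through the performative risk: writing ${\cal L}(\prm_1,\prm_2)\eqdef\EE_{Z\sim\Pi_{\prm_2}}[\ell(\prm_1;Z)]$, one decomposes the gap as $\left[{\cal L}(\cprm_k^{(m)})-{\cal L}(\cprm_k)\right]+\left[{\cal L}(\cprm_k^{(m)},\cprm_k)-{\cal L}(\cprm_k^{(m)},\cprm_k^{(m)})\right]$; the first bracket is bounded via smoothness (Assumption~\ref{assu:Lip}) and the bounded-gradient corollary $\norm{\grd{\cal L}(\prm)}\leq 2\sqrt{L\G}$ (Corollary~\ref{cor:bdd_grad}), giving $2\sqrt{L\G}\norm{\cprm_k^{(m)}-\cprm_k}+\frac{L}{2}\norm{\cprm_k^{(m)}-\cprm_k}^2$, and the second bracket is where bounded loss plus Assumption~\ref{assu:smooth_dist} legitimately enter, giving $2\G L_1\norm{\cprm_k^{(m)}-\cprm_k}$. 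This is precisely the origin of the $\sqrt{L}\G^{3/2}$ term in $c_2$ --- which you attribute first to $A_1$ and later to ``drift-induced contributions,'' but which in fact comes from $A_3$ --- and of the quadratic drift term whose eventual domination by the linear terms is exactly what the threshold $t_1$ in \eqref{eq:const-t1} is for. Your treatments of $A_1$ (telescoping TV bound via Assumption~\ref{assu:smooth_kernel}, i.e.\ Lemma~\ref{lem:tv_summation_bound}, yielding the $L_2\G^2$ contribution) and $A_2$ (geometric mixing), and the subsequent summation and exponent arithmetic, are correct and match the paper.
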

\begin{proof}
Fix $k>0$, and recall $\cprm_{k}\eqdef\prm_{k}+\delta_{k}u_{k}, \cprm_{k}^{(\ell)}\eqdef\prm_{k}^{(\ell)}+\delta_{k}u_{k}$, then consider the following pair of Markov chains:
\begin{align}
    Z_{k} &=Z_{k}^{(0)}\xrightarrow[]{\cprm_{k}^{(1)}}Z_{k}^{(1)}\xrightarrow[]{\cprm_{k}^{(2)}}Z_{k}^{(2)}\xrightarrow[]{\cprm_{k}^{(3)}}Z_{k}^{(3)}\cdots\xrightarrow[]{\cprm_{k}^{(\tau_{k})}}Z_{k}^{(\tau_{k})}=Z_{k+1}
    \label{chain1}
    \\
    Z_k &=\Tilde{Z}_{k}^{(0)}\xrightarrow[]{\cprm_{k}}\Tilde{Z}_{k}^{(1)}\xrightarrow[]{\cprm_{k}}\Tilde{Z}_{k}^{(2)}\xrightarrow[]{\cprm_{k}}\Tilde{Z}_{k}^{(3)}\cdots\xrightarrow[]{\cprm_{k}}\Tilde{Z}_{k}^{(\tau_{k})}
    \label{chain2}
\end{align}
where the arrow associated with $\prm$ represents the transition kernel $\TT_{\prm}(\cdot,\cdot)$.

Note that Chain \ref{chain1} is the trajectory of {\sf DFO}($\lambda$) algorithm at iteration $k$, while 
Chain \ref{chain2} describes the trajectory of the same length generated by a reference Markov chain with fixed transition kernel $\TT_{\cprm_{k}}(\cdot,\cdot)$. 
Since $Z_{k}=Z_{k}^{(0)}=\Tilde{Z}_{k}^{(0)}$, we shall use them interchangeably.

\noindent
Define  $\Delta_{k, m}\eqdef \EE_{{\cal F}^{k-1}}\left[g_{k}^{(m)}-\EE_{Z\sim \Pi_{\cprm_k}}\left[g_{k} (\prm_{k};u_{k}, Z)\right]\right]$, then $\term{2}$ can be reformed as
\begin{align*}
    \term{2} &= -(1-\lambda)\EE\sum_{k=0}^{t}\Pscal{\grd {\cal L}(\prm_k)}{\sum_{m=1}^{\tau_{k}}\lambda^{\tau_{k}-m}\Delta_{k, m}}
    \\
    &\leq (1-\lambda) \EE\sum_{k=0}^{t} \norm{\grd {\cal L}(\prm_k)} \cdot \norm{ \sum_{m=1}^{\tau_{k}}\lambda^{\tau_{k} -m} \Delta_{k, m} }
\end{align*}
Next, observe that each $\Delta_{k, m}$ can be decomposed into 3 bias terms as follows
\begin{align*}
    \Delta_{k, m} &= \EE_{{\cal F}^{k-1}} \left[ \frac{d}{\delta_{k}}  \left(\EE[\ell(\cprm_{k}^{(m)}; Z_{k}^{(m)})|\cprm_{k}^{(m)}, Z_{k}^{(0)}] - \EE_{Z\sim \Pi_{\cprm_k}}[\ell(\cprm_k; Z)|\cprm_{k}] \right) u_{k}\right]
    \\
    &= \EE_{{\cal F}^{k-1}}\left[\frac{d}{\delta_{k}}  \left(\EE[\ell(\cprm_{k}^{(m)}; Z_{k}^{(m)})|\cprm_{k}^{(m)}, Z_{k}^{(0)}] - \EE_{\Tilde{Z}_{k}^{(m)}}[\ell(\cprm_{k}^{(m)}; \Tilde{Z}_{k}^{(m)})|\cprm_{k}^{(m)}, \Tilde{Z}_{k}^{(0)}] \right) u_{k}\right]
    \\
        &\quad + \EE_{{\cal F}^{k-1}} \left[ \frac{d}{\delta_{k}}   \left(\EE_{\Tilde{Z}_{k}^{(m)}}[\ell(\cprm_{k}^{(m)}; \Tilde{Z}_{k}^{(m)})|\cprm_{k}^{(m)}, \Tilde{Z}_{k}^{(0)}] - \EE_{Z\sim \Pi_{\cprm_k}}[\ell(\cprm_{k}^{(m)}; Z)|\cprm_{k}^{(m)}] \right) u_{k} \right]
        \\
        &\quad + \EE_{{\cal F}^{k-1}} \frac{d}{\delta_{k}} \underbrace{ \EE_{Z\sim\Pi_{\cprm_{k}}} \left[ \ell(\cprm_{k}^{(m)}; Z) - \ell(\cprm_k; Z)|\cprm_{k}^{(m)}, \cprm_{k}\right] }_{\leq 
        c_8
        \norm{\cprm_{k}^{(m)}-\cprm_{k}}+\frac{L}{2}\norm{\cprm_{k}^{(m)}-\cprm_{k}}^2} u_{k} 
\end{align*}
where we use Lemma \ref{lem:lip_decoupled_risk} in the last inequality and $c_8\eqdef 2\left(\sqrt{L\G} + \G L_1\right)$.

Here we bound these three parts separately. For the first term, it holds that
\begin{align*}
    &\left| \EE[\ell(\cprm_{k}^{(m)}; Z_{k}^{(m)})|\cprm_{k}^{(m)}, Z_{k}^{(0)}] - \EE_{\Tilde{Z}_{k}^{(m)}}[\ell(\cprm_{k}^{(m)}; \Tilde{Z}_{k}^{(m)})|\cprm_{k}^{(m)}, \Tilde{Z}_{k}^{(0)}] \right|
    \\
    =&\left| \int_{\sf Z}\ell(\cprm_{k}^{(m)}; z)\PP(Z_{k}^{(m)}=z|Z_{k}^{(0)})-\ell(\cprm_{k}^{(m)}; z)\PP(\Tilde{Z}_{k}^{(m)}=z|\Tilde{Z}_{k}^{(0)})dz
    \right|
    \\
    \leq &G\int_{\sf Z}\left|\PP(Z_{k}^{(m)}=z|Z_{k}^{(0)})- \PP(\Tilde{Z}_{k}^{(m)}=z|\Tilde{Z}_{k}^{(0)})\right| dz
    \\
    = &2 \G \tv{\PP(z_{k}^{(m)}\in\cdot|Z_{k}^{(0)})}{\PP(\Tilde{Z}_{k}^{(m)}\in\cdot|Z_{k}^{(0)})}
    \\
    \leq & 2 \G L_2 \sum_{\ell=1}^{m-1} \norm{\cprm_{k}^{(\ell)} -\cprm_k }
    = 2 \G L_2 \sum_{\ell=1}^{m-1} \norm{\prm_{k}^{(\ell)} -\prm_k }
\end{align*}
where the first inequality is due to Assumption \ref{assu:BoundLoss}, the second inequality is due to Lemma \ref{lem:tv_summation_bound}.

For the second term,
we have
\begin{align*}
    \left|\EE_{\Tilde{Z}_{k}^{(m)}}[\ell(\cprm_{k}^{(m)}; Z_{k}^{(m)})] - \EE_{Z\sim \Pi_{\cprm_k}}[\ell(\cprm_k; Z)]\right| &=\left|\int_{\sf Z}\ell(\cprm_{k}^{(m)}; z)\PP(\Tilde{Z}_{k}^{(m)}=z|\Tilde{Z}_{k}^{(0)})-\ell(\cprm_{k}^{(m)}; z)\Pi_{\cprm_k}(z) d z\right|
    \\
    &\overset{(a)}{\leq} \G\int_{\sf Z}|\PP(\Tilde{Z}_{k}^{(m)}=z|\Tilde{Z}_{k}^{(0)})- \Pi_{\cprm_k}(z))| dz
    \\
    & = 2 \G \tv{\PP(\Tilde{Z}_{k}^{(m)}\in\cdot|\Tilde{Z}_{k}^{(0)})}{\Pi_{\cprm_k}}
    \\
    &\overset{(b)}{\leq} 2\G M\rho^{m}
\end{align*}
where we use Assumption \ref{assu:BoundLoss} in inequality (a) and Assumptions \ref{assu:FastMixing} in inequality (b). 
Combining three upper bounds, we obtain that
\begin{align*}
   \norm{\Delta_{k,m}}&\leq \EE_{{\cal F}^{k-1}}\frac{d}{\delta_{k}}\left(2 \G L_2 \sum_{\ell=1}^{m-1}\left[ \norm{\prm_{k}^{(\ell)} -\prm_k }\right] + 2\G M\rho^{m} + c_8 \norm{\cprm_{k}^{(m)}-\cprm_k}+\frac{L}{2} \norm{\cprm_{k}^{(m)}-\cprm_k}^2\right)
    \\
    &\leq\frac{d}{\delta_{k}}\left(2 L_2 \G \sum_{\ell=1}^{m-1} \sum_{j=1}^{\ell-1}\eta_{k}\lambda^{\tau_{k}-j}\frac{d \G}{\delta_{k}} + 2\G M\rho^{m} + c_8 \sum_{j=1}^{m-1}\eta_{k}\lambda^{\tau_{k}-j}\frac{d \G}{\delta_{k}}\right)
    \\
    &+\frac{d}{\delta_{k}}\frac{L}{2}
    \left(\sum_{j=1}^{m-1}\eta_{k}\lambda^{\tau_{k}-j}\frac{d \G}{\delta_{k}}\right)^2
    \\
    &<\frac{d}{(1-\lambda)^2}\left(2L_2\G^2 d+c_8 \G d\right)\lambda^{\tau_{k}-m+1}\frac{\eta_{k}}{\delta_{k}^2}+\frac{L\G^2 d^3}{2(1-\lambda)^2}\lambda^{2(\tau_{k}-m+1)}\frac{\eta_{k}^2}{\delta_{k}^3}+2\G M d\frac{\rho^m}{\delta_{k}}
\end{align*}
Then it holds that
\begin{align*}
    \norm{\sum_{m=1}^{\tau_{k}}\lambda^{\tau_{k}-m}\Delta_{k,m}} &\leq \sum_{m=1}^{\tau_{k}}\lambda^{\tau_{k}-m}\norm{\Delta_{k,m}}
    \\
    &\leq \frac{d}{(1-\lambda)^2}\left(2L_2\G^2d+c_8d\G\right)\frac{\eta_{k}}{\delta_{k}^2}\sum_{m=1}^{\tau_{k}}\lambda^{2(\tau_{k}-m)}\lambda
    \\
    &\quad +\frac{L\G^2}{2(1-\lambda)^2}d^3\frac{\eta_{k}^2}{\delta_{k}^3}\sum_{m=1}^{\tau_{k}}\lambda^{3(\tau_{k}-m)}\lambda^2
    +2\G M d\delta_{k}^{-1}\sum_{m=1}^{\tau_{k}}\lambda^{\tau_{k}-m}\rho^{m}
    \\
    &\leq (2L_2\G^2 d+c_8 G d)\frac{d\lambda}{(1-\lambda)^3}\frac{\eta_{k}}{\delta_{k}^2} +\frac{L\G^2}{2}\frac{d^3\lambda^2}{1-\lambda}\frac{\eta_{k}^2}{\delta_{k}^3}
    +2\G M d\delta_{k}^{-1}\tau_{k}\max\{\rho,\lambda\}^{\tau_{k}}
\end{align*}
Finally, provided $\tau_{k}\geq\log_{\max\{\rho,\lambda\}}(1+k)^{-1}$ and $0<2\alpha-4\beta<1$, we can bound $\term{2}$ as follows:
\begin{align*}
    \term{2} &\leq (1-\lambda) \EE\sum_{k=0}^{t} \norm{\grd {\cal L}(\prm_k)} \cdot \norm{\sum_{m=1}^{\tau_{k}}\lambda^{\tau_{k} -m} \Delta_{k,m}}
    \\
    &\leq (1-\lambda) \EE\sum_{k=0}^{t} \norm{\grd {\cal L}(\prm_k)}\left[(2 L_2\G^2 d+c_8 G d)\frac{d\lambda}{(1-\lambda)^3}\frac{\eta_{k}}{\delta_{k}^2}+\frac{L\G^2}{2}\frac{d^3\lambda^2}{1-\lambda}\frac{\eta_{k}^2}{\delta_{k}^3}
    +2\G M d\frac{\tau_{k}}{\delta_{k}(1+k)}\right]
    \\
    &\leq \frac{d\lambda}{(1-\lambda)^2}(2L_2\G^2 d+c_8 \G d)\left(\sum_{k=0}^{t}\EE\norm{\grd {\cal L}(\prm_k)}^2\right)^{1/2}\left(\sum_{k=0}^{t}\frac{\eta_{k}^2}{\delta_{k}^4}\right)^{1/2}
    \\
    &\quad +d^3\lambda^2 \frac{L\G^2}{2}\left(\sum_{k=0}^{t}\EE\norm{\grd {\cal L}(\prm_k)}^2\right)^{1/2}\left(\sum_{k=0}^t\frac{\eta_{k}^4}{\delta_{k}^6}\right)^{1/2}
    \\
    &\quad +\frac{d}{1-\lambda}\G M \left(\sum_{k=0}^{t}\EE\norm{\grd {\cal L}(\prm_k)}^2\right)^{1/2}\left(\sum_{k=0}^{t}\frac{\tau_{k}^2}{\delta_{k}^2(1+k)^2}\right)^{1/2}
    \\
    &\overset{(b)}{\leq} \frac{d^{2}\lambda}{(1-\lambda)^2}6(L_2\G^2+\sqrt{L}\G^{3/2}+L_1\G^2 )\left(\sum_{k=0}^{t}\EE\norm{\grd {\cal L}(\prm_k)}^2\right)^{1/2}\left(\sum_{k=0}^{t}\frac{\eta_{k}^2}{\delta_{k}^4}\right)^{1/2}
    \\
    &\leq c_{2}\frac{d^{2}}{(1-\lambda)^2}\left(\frac{1}{1+t}\sum_{k=0}^{t}\EE\norm{\grd {\cal L}(\prm_k)}^2\right)^{1/2} \cdot (1+t)^{1-(\alpha-2\beta)}\quad \forall t\geq \max\{t_1, t_2\}
\end{align*}
\noindent
where $c_2 \eqdef \frac{\eta_0}{\delta_0^2}\frac{6\cdot (L_1\G^2+L_2\G^2+\sqrt{L}\G^{3/2} )}{\sqrt{1-2\alpha+4\beta}}$. The inequality (b) holds since $\tau_{k}=\Theta(\log k)$, $4\alpha-6\beta>2\alpha-4\beta$ and $2-2\beta>2\alpha-4\beta$, so there exist constants 
\begin{align}
    t_{1} & \eqdef \inf_{t}\left\{ t\geq 0 \, | \, \frac{d^6\lambda^4 L^2\G^4}{4}\sum_{k=0}^{t} \frac{\eta_{k}^4}{\delta_{k}^6}\leq \frac{d^2\lambda^2(2L_2\G^2d+c_8\G d)^2}{(1-\lambda)^4}\sum_{k=0}^{t} \frac{\eta_{k}^{2}}{\delta_{k}^{4}}\right\} \label{eq:const-t1}
    \\
    t_{2} & \eqdef \inf_{t}\left\{ t\geq 0 \, | \, d^2 \G^2M^2\sum_{k=0}^{t} \frac{\tau_{k}^2}{\delta_{k}^2 (1+k)^2}\leq \frac{d^2\lambda^2(2L_2\G^2d+c_8\G d)^2}{(1-\lambda)^4}\sum_{k=0}^{t} \frac{\eta_{k}^{2}}{\delta_{k}^{4}}\right\} \label{eq:const-t2}
\end{align}
In brief, we have
\begin{align*}
    \term{2} \leq c_{2}\frac{d^{2}}{(1-\lambda)^2}\left(\frac{1}{1+t}\sum_{k=0}^{t}\EE\norm{\grd {\cal L}(\prm_k)}^2\right)^{1/2} \cdot (1+t)^{1-(\alpha-2\beta)}\quad \forall t\geq \max\{t_1,t_2\}
\end{align*}
\end{proof}

\begin{lemma}
\label{lem:bound_term_3}
Under Assumption \ref{assu:Lip}, \ref{assu:BoundLoss} and $0<\beta< 1/2$, with $\tau_{k}\geq\frac{1}{\log 1/\max\{\rho,\lambda\}}\left(\log(1+k)+\log\frac{d}{\delta_0}\right)$, it holds that
\begin{equation}
    {\bf I}_{3}(t)\leq c_3 {{\cal A}(t)}^{\frac{1}{2}} (1+t)^{1-\beta}
\end{equation}
where ${\cal A}(t) \eqdef \frac{1}{1+t} \sum_{k=0}^{t} \EE\norm{\grd {\cal L}(\prm_k)}^2$ and constant $c_3=\frac{1}{\sqrt{1-2\beta}}\max\{2^{1-\beta}L\delta_0, 2^\beta \G \sqrt{1-\beta}\}$.
\end{lemma}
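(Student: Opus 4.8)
The plan is to exploit that, unlike $\term{2}$, the vector inside the inner product in $\term{3}$ is the \emph{deterministic} smoothed gradient $\grd {\cal L}_{\delta_k}(\prm_k)$, which is independent of the inner index $m$. First I would collapse the geometric weights using $\sum_{m=1}^{\tau_k}\lambda^{\tau_k-m}=(1-\lambda^{\tau_k})/(1-\lambda)$, so that
\begin{align*}
(1-\lambda)\Big(\sum_{m=1}^{\tau_k}\lambda^{\tau_k-m}\grd {\cal L}_{\delta_k}(\prm_k)\Big)-\grd {\cal L}(\prm_k)
= -\lambda^{\tau_k}\grd {\cal L}_{\delta_k}(\prm_k)+\big(\grd {\cal L}_{\delta_k}(\prm_k)-\grd {\cal L}(\prm_k)\big).
\end{align*}
This isolates the two error sources: a \emph{truncation} bias $\lambda^{\tau_k}\grd {\cal L}_{\delta_k}(\prm_k)$ from cutting the geometric series after $\tau_k$ terms, and a \emph{smoothing} bias $\grd {\cal L}_{\delta_k}(\prm_k)-\grd {\cal L}(\prm_k)$ of order ${\cal O}(\delta_k)$.

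Next I would bound the norm of this vector termwise. For the smoothing part I use the standard consequence of Assumption~\ref{assu:Lip}, namely $\norm{\grd {\cal L}_{\delta_k}(\prm_k)-\grd {\cal L}(\prm_k)}\le L\delta_k$; for the truncation part I use the crude estimate $\norm{\grd {\cal L}_{\delta_k}(\prm_k)}\le d\G/\delta_k$, which follows from Assumption~\ref{assu:BoundLoss} and Lemma~\ref{lem:BoundedBias} since $\grd {\cal L}_{\delta_k}(\prm_k)=\EE[g_{\delta_k}(\prm_k;u_k,Z)]$ with $\norm{g_{\delta_k}}\le d\G/\delta_k$. The burn-in schedule is calibrated precisely to tame the latter: the hypothesis $\tau_k\ge\frac{1}{\log(1/\max\{\rho,\lambda\})}\big(\log(1+k)+\log\tfrac{d}{\delta_0}\big)$ yields $\lambda^{\tau_k}\le\max\{\rho,\lambda\}^{\tau_k}\le\frac{\delta_0}{d(1+k)}$, whence $\lambda^{\tau_k}\cdot\frac{d\G}{\delta_k}\le\G(1+k)^{\beta-1}$. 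Collecting these, the bias vector has norm at most $L\delta_0(1+k)^{-\beta}+\G(1+k)^{\beta-1}$.

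I would then apply Cauchy--Schwarz (together with Jensen) to
$\term{3}\le\sum_{k=0}^t\EE\big[\norm{\grd {\cal L}(\prm_k)}\big(L\delta_0(1+k)^{-\beta}+\G(1+k)^{\beta-1}\big)\big]$,
factoring out $\big(\sum_k\EE\norm{\grd {\cal L}(\prm_k)}^2\big)^{1/2}={\cal A}(t)^{1/2}(1+t)^{1/2}$ and leaving a deterministic power-series factor. The two series behave differently for $\beta\in(0,\tfrac12)$: the sum $\sum_{k=0}^t(1+k)^{-2\beta}\le\frac{(1+t)^{1-2\beta}}{1-2\beta}$ grows and produces the $L\delta_0(1+t)^{1-\beta}$ term, whereas $\sum_{k=0}^t(1+k)^{2\beta-2}\le\frac{2(1-\beta)}{1-2\beta}$ \emph{converges} — this is exactly where $\beta<\tfrac12$ and the $\sqrt{1-\beta}$ factor enter — yielding the $\G\sqrt{1-\beta}$ term with only an $(1+t)^{1/2}$ growth. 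Bounding $(1+t)^{1/2}\le(1+t)^{1-\beta}$ to unify exponents and gathering the two coefficients into a single $\max\{\cdot,\cdot\}$ gives the claimed $c_3\,{\cal A}(t)^{1/2}(1+t)^{1-\beta}$.

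The computations are routine relative to $\term{2}$, and no reference-chain coupling is needed here because $\grd {\cal L}_{\delta_k}(\prm_k)$ is a genuine expectation independent of the Markov trajectory. The only delicate point I expect is matching the truncation rate to the query radius: one must check that the chosen $\tau_k$ forces $\lambda^{\tau_k}(d/\delta_k)$ to decay like $(1+k)^{\beta-1}$ so that its square is summable (i.e.\ $2\beta-2<-1$), which is precisely what makes the truncation contribution subdominant to the smoothing term and what ties the constraint $\beta<\tfrac12$ to the appearance of $\sqrt{1-\beta}/\sqrt{1-2\beta}$ in $c_3$.
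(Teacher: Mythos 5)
Your proposal is correct and follows essentially the same route as the paper's own proof: the identical decomposition into the smoothing bias $\grd {\cal L}_{\delta_k}(\prm_k)-\grd {\cal L}(\prm_k)$ (bounded by $L\delta_k$ via Lemma~\ref{lem:BoundedBias}) and the truncation bias $\lambda^{\tau_k}\grd {\cal L}_{\delta_k}(\prm_k)$ (bounded by $d\G/\delta_k$ and tamed by the burn-in schedule so that $\lambda^{\tau_k} d\G/\delta_k \le \G(1+k)^{\beta-1}$), followed by the same Cauchy--Schwarz step and the same two series estimates. The paper assembles the constants into $c_3$ exactly as you do, so there is nothing to add.
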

\begin{proof}
Recall that  $g_{k}(\prm;u,z)\eqdef g_{\delta_{k}}(\prm;u,z)$ and ${\cal L}_{k}(\prm)\eqdef {\cal L}_{\delta_{k}}(\prm)$.
\begin{align*}
    \term{3} &= -\sum_{k=0}^{t} \EE \Pscal{\grd {\cal L}(\prm_k)}{(1-\lambda)\left(\sum_{m=1}^{\tau_{k}}\lambda^{\tau_{k}-m} \grd{{\cal L}_{k}(\prm_{k})} \right) -\grd{\cal L}(\prm_k)}
    \\
    &= -\sum_{k=0}^{t} \EE \Pscal{\grd {\cal L}(\prm_k)}{\left((1-\lambda)\sum_{m=1}^{\tau_{k}}\lambda^{\tau_{k}-m}\right) \grd{{\cal L}_{k}(\prm_{k})}  -\grd {\cal L}(\prm_k)}
    \\
    &= -\sum_{k=0}^{t} \EE \Pscal{\grd {\cal L}(\prm_k)}{\grd{{\cal L}_{k}(\prm_{k})} -\grd {\cal L}(\prm_k)} - \lambda^{\tau_{k}}\EE\Pscal{\grd {\cal L}(\prm_k)}{\EE_{Z\sim \Pi_{\cprm_k}}[g_{k}(\prm_{k};u_{k},Z)]}
\end{align*}
where we apply Lemma \ref{lem:UnbiasedGrad} at the last equality. 

By triangle inequality, Cauchy-Schwarz inequality and Assumption \ref{assu:BoundLoss}, we obtain
\begin{align*}
    \term{3} &\leq \sum_{k=0}^{t} \EE \norm{\grd {\cal L}(\prm_k)} \cdot \norm{\grd{{\cal L}_{k}(\prm_{k})} - \grd {\cal L}(\prm_k)}
    + \sum_{k=0}^{t} \lambda^{\tau_{k}} \EE\norm{\grd {\cal L}(\prm_k)} \frac{d\G}{\delta_{k}}
\end{align*}

Provided $\tau_{k}\geq\frac{\log(1+k)+\log\frac{d}{\delta_0}}{\log 1/\max\{\rho,\lambda\}} \geq \frac{ \log{\delta_0 / d(1+k)^{-1}}}{\log \max\{\rho,\lambda\}} = \log_{\max\{\rho,\lambda\}}\frac{\delta_0}{d} (1+k)^{-1}\geq \log_{\lambda} \frac{\delta_0}{d}(1+k)^{-1}$, with Lemma \ref{lem:BoundedBias} as a consequence of Assumption \ref{assu:Lip}, we have

\begin{align*}
    \term{3} &\leq  \sum_{k=0}^{t} \EE \norm{\grd {\cal L}(\prm_k)} \cdot L\delta_{k} + \sum_{k=0}^{t}  \EE\norm{\grd {\cal L}(\prm_k)} \frac{\delta_0}{d} \frac{d\G}{\delta_{0}} (1+k)^{\beta-1}
    \\
    &=  \sum_{k=0}^{t} \EE \norm{\grd {\cal L}(\prm_k)} \cdot L\delta_{k} + \G \sum_{k=0}^{t}  \EE\norm{\grd {\cal L}(\prm_k)} (1+k)^{\beta-1}
    \\
    &\leq L\left( \sum_{k=0}^{t}\EE\norm{\grd {\cal L}(\prm_k)}^2\right)^{1/2} \left(\sum_{k=0}^{t}\delta_{k}^2 \right)^{1/2}
    + \G\left(\sum_{k=0}^{t} \EE\norm{\grd {\cal L}(\prm_k)}^2\right)^{1/2} \left( \sum_{k=0}^{t} (1+k)^{2(\beta-1)}\right)^{1/2}
\end{align*}

\noindent
Since $\beta<1/2$, it holds that
\begin{align*}
    &\sum_{k=0}^{t} \delta_{k}^2 = \sum_{k=0}^{t} \frac{\delta_{0}^2}{(1+k)^{2\beta}} \leq \frac{\delta_{0}^{2}}{1-2\beta} \left[ 1-2\beta+(1+t)^{1-2\beta}-1\right] \leq \frac{\delta_{0}^{2}}{1-2\beta}(1+t)^{1-2\beta}
    \\
    &\sum_{k=0}^{t} (1+k)^{2(\beta-1)} 
    \leq 1+\int_{0}^{t} (x+1)^{2(\beta-1)} \diff x < 1+ \frac{1}{1-2\beta}
\end{align*}
Then we can conclude
\begin{align*}
    \term{3} &\leq  c_{3} \left( \frac{1}{1+t}\sum_{k=0}^{t}\EE\norm{\grd {\cal L}(\prm_k)}^2\right)^{1/2} \cdot (1+t)^{1-\beta}
\end{align*}
where $c_{3} \eqdef \frac{2}{\sqrt{1-2\beta}}\max\{L\delta_0, \G \sqrt{1-\beta}\}$.
\end{proof}

\begin{lemma}
\label{lem:bound_term_4}
Under assumption \ref{assu:BoundLoss} and constraint $0<\alpha<1$, it holds that 
\begin{equation}
    \term{4}\leq c_4 \frac{d^2}{1-\lambda} (1+t)^{1-\left(\alpha-2\beta\right)}
\end{equation}
where constant $c_4=\frac{\eta_{0} L \G^2 }{\delta_{0}^2 (2\beta-\alpha+1)}$.
\end{lemma}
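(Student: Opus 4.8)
The plan is to bound $\term{4}$ by a purely deterministic argument, exploiting that Assumption~\ref{assu:BoundLoss} provides a uniform almost-sure bound on each gradient estimate, so no statistical/mixing analysis is needed here. Recall from the definition in Lemma~\ref{lem:decomposition} that
\[
\term{4} = \frac{L(1-\lambda)}{2}\sum_{k=1}^{t}\eta_{k}\,\EE\Bigl\|\sum_{m=1}^{\tau_{k}}\lambda^{\tau_{k}-m}g_{k}^{(m)}\Bigr\|^2,
\]
where $g_{k}^{(m)} = \frac{d}{\delta_{k}}\ell(\cprm_{k}^{(m)}; Z_{k}^{(m)})\,{\bm u}_{k}$ with ${\bm u}_{k}\in\mathbb{S}^{d-1}$. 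The first step I would take is to observe that $\|{\bm u}_k\|=1$ together with $|\ell(\cdot;\cdot)|\le\G$ from Assumption~\ref{assu:BoundLoss} gives the pointwise bound $\|g_{k}^{(m)}\| \le \frac{d\G}{\delta_{k}}$, uniformly in $m$ and over all the randomness.

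Next I would apply the triangle inequality inside the norm and sum the geometric weights via $\sum_{m=1}^{\tau_{k}}\lambda^{\tau_{k}-m} = \sum_{j=0}^{\tau_{k}-1}\lambda^{j} \le \frac{1}{1-\lambda}$, yielding the deterministic estimate $\bigl\|\sum_{m=1}^{\tau_{k}}\lambda^{\tau_{k}-m}g_{k}^{(m)}\bigr\| \le \frac{d\G}{\delta_{k}(1-\lambda)}$. Squaring and taking expectations gives $\EE\bigl\|\sum_{m}\lambda^{\tau_{k}-m}g_{k}^{(m)}\bigr\|^2 \le \frac{d^2\G^2}{\delta_{k}^2(1-\lambda)^2}$, and substituting into the definition collapses one power of $(1-\lambda)$:
\[
\term{4} \le \frac{L\G^2 d^2}{2(1-\lambda)}\sum_{k=1}^{t}\frac{\eta_{k}}{\delta_{k}^2}.
\]

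Finally I would plug in $\eta_{k}=\eta_0(1+k)^{-\alpha}$ and $\delta_{k}=\delta_0(1+k)^{-\beta}$, so that $\eta_{k}/\delta_{k}^2 = \frac{\eta_0}{\delta_0^2}(1+k)^{-(\alpha-2\beta)}$, and bound the resulting partial sum by an integral. Since $0<\alpha<1$ and $\beta>0$ guarantee $2\beta-\alpha+1>0$, the integral test gives $\sum_{k=1}^{t}(1+k)^{2\beta-\alpha} \le \frac{2(1+t)^{1-(\alpha-2\beta)}}{2\beta-\alpha+1}$, whose constant $2$ cancels the $\tfrac12$ prefactor and produces exactly the claimed bound with $c_4 = \frac{\eta_0 L\G^2}{\delta_0^2(2\beta-\alpha+1)}$. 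I expect no genuine obstacle: in contrast to $\term{2}$, where the reference-chain decomposition is required to separate the mixing bias from the within-epoch drift, the uniform loss bound removes all statistical content and reduces the task to a deterministic norm estimate followed by an integral test. The only care needed is in verifying the exponent condition $2\beta-\alpha+1>0$ that legitimizes the integral bound and in tracking how the $\tfrac12$ prefactor and the integral constant combine into $c_4$.
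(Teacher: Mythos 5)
Your proposal is correct and follows essentially the same route as the paper's proof: the uniform bound $\|g_k^{(m)}\|\le d\G/\delta_k$ from Assumption~\ref{assu:BoundLoss}, the triangle inequality with the geometric-sum bound $\sum_{m=1}^{\tau_k}\lambda^{\tau_k-m}\le \frac{1}{1-\lambda}$ (the paper writes $\frac{1-\lambda^{\tau_k}}{1-\lambda}$ before relaxing it), and the integral test on $\sum_k (1+k)^{2\beta-\alpha}$ whose factor $2$ cancels the $\tfrac12$ prefactor to give exactly $c_4=\frac{\eta_0 L\G^2}{\delta_0^2(2\beta-\alpha+1)}$. The only cosmetic difference is the starting index of the sum ($k=1$ versus $k=0$ in the paper's appendix), which is immaterial to the bound.
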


\begin{proof}
\begin{align*}
    \term{4} &= \frac{(1-\lambda)L}{2}\sum_{k=0}^{t} \eta_{k} \EE \norm{\sum_{m=1}^{\tau_{k}}\lambda^{\tau_{k}-m}g_{k}^{(m)}}^2
    \\
    &\leq\frac{(1-\lambda)L}{2}\sum_{k=0}^{t} \eta_{k}\EE\left(\sum_{m=1}^{\tau_{k}}\lambda^{\tau_{k}-m}\norm{{g_{k}^{(m)}}}\right)^2
    \\
    &\leq\frac{(1-\lambda)L}{2}\sum_{k=0}^{t} \eta_{k}\left(\sum_{m=1}^{\tau_{k}}\lambda^{\tau_{k}-m}\right)^2\frac{(d\G)^2}{\delta_{k}^2}
    \\
    &\leq\frac{(1-\lambda)L d^2\G^2}{2}\sum_{k=0}^{t}\left(\frac{1-\lambda^{\tau_{k}}}{1-\lambda}\right)^2\frac{\eta_{k}}{\delta_{k}^2}
    \\
    &<\frac{d^2 L\G^2}{2(1-\lambda)}\sum_{k=0}^{t}\frac{\eta_{k}}{\delta_{k}^2}
\end{align*}

Recall that $\eta_{k}=\frac{\eta_{0}}{(k+1)^{\alpha}}$, $\delta_{k}=\frac{\delta_{0}}{(1+k)^{\beta}}$ and $\alpha<1, \beta\geq 0$, it is clear that $\alpha-2\beta<1$, so it holds that
\begin{align*}
    \sum_{k=0}^{t}\frac{\eta_{k}}{\delta_{k}^2} &= \frac{\eta_{0}}{\delta_{0}^{2}} \sum_{k=0}^{t} (1+k)^{2\beta-\alpha} \leq \frac{\eta_{0}}{\delta_{0}^{2}} \left(1+\int_{0}^{t} (1+x)^{2\beta-\alpha} \diff x\right)
    \\
    &\leq \frac{\eta_{0}}{\delta_{0}^2 (2\beta-\alpha+1)} \left[ (1+t)^{2\beta-\alpha+1} -\alpha+2\beta \right]
    \leq \frac{2\eta_{0}}{\delta_{0}^2 (2\beta-\alpha+1)} (1+t)^{2\beta-\alpha+1}
\end{align*}
In conclusion, we obtain that
\begin{align*}
    \term{4} \leq d^2\frac{L\G^2}{1-\lambda}  \frac{\eta_{0}}{\delta_{0}^2 (2\beta-\alpha+1)} \cdot (1+t)^{2\beta-\alpha+1} = c_{4}\frac{d^2}{1-\lambda} (1+t)^{1-(\alpha-2\beta)}
\end{align*}
where $c_{4} \eqdef \frac{\eta_{0} }{\delta_{0}^2 }\cdot\frac{L \G^2}{2\beta-\alpha+1}.$ 
\end{proof}

\section{Proof of Lemma \ref{lem:major_bound}}
\begin{proof}
    Combining Lemmas \ref{lem:decomposition} and  \ref{lem:bound_four_terms}, subject to the constraints $0<\alpha<1,0<\beta\leq 1/2, 0<2\alpha-4\beta\leq 1$, it holds that for any $t\geq\max\{t_1,t_2\}$,
\begin{align*}
    &\sum_{k=0}^{t}\EE\norm{\grd {\cal L}(\prm_k)}^2 
    \\
    &\leq\term{1} + \term{2} + \term{3} + \term{4}
    \\
    & \leq c_1 (1-\lambda) (1+t)^{\alpha} + c_2 \frac{d^{5/2}}{(1-\lambda)^2} (1+t)^{1-(\alpha-2\beta)} {\cal A}(t)^{1/2}
    \\
    &\quad +c_3 (1+t)^{1-\beta}{\cal A}(t)^{1/2}  +c_4 \frac{d^2}{1-\lambda} (1+t)^{1-(\alpha-2\beta)}
\end{align*}
Recall ${\cal A}(t) \eqdef \frac{1}{1+t} \sum_{k=0}^{t}\EE\norm{\grd {\cal L}(\prm_k)}^2$, above inequality can be rewritten as
\begin{align*}
    {\cal A}(t) &\leq \frac{1}{1+t}\bigg[c_2 \frac{d^{5/2}}{(1-\lambda)^2} (1+t)^{1-(\alpha-2\beta)} {\cal A}(t)^{1/2}
    \\
    &\quad +c_3 (1+t)^{1-\beta}{\cal A}(t)^{1/2} 
    +c_1 (1-\lambda) (1+t)^{\alpha}+c_4 \frac{d^2}{1-\lambda}(1+t)^{1-(\alpha-2\beta)}\bigg]
    \\
    &= \left(c_2 \frac{d^{5/2}}{(1-\lambda)^2} (1+t)^{-(\alpha-2\beta)}+c_3 (1+t)^{-\beta}\right){\cal A}(t)^{1/2} 
    + c_1 (1-\lambda) (1+t)^{-(1-\alpha)}
    \\
    &\quad + c_4 \frac{d^2}{1-\lambda} (1+t)^{-(\alpha-2\beta)}
\end{align*}
which is a quadratic inequality in ${\cal A}(t)^{1/2}$.

Let $x={\cal A}(t)^{1/2}, a=c_2 \frac{d^{5/2}}{(1-\lambda)^2} (1+t)^{-(\alpha-2\beta)}+c_3 t^{-\beta}, b=c_1 (1-\lambda) (1+t)^{-(1-\alpha)}+ c_4 \frac{d^2}{1-\lambda} (1+t)^{-(\alpha-2\beta)}$, we have $x^2-ax-b\leq 0$. Since $a,b>0$, the quadratic has two real roots, denoted as $x_1,x_2$ respectively, and $x_1<0<x_2$. Moreover, we must have $x\leq x_2$, which implies $x\leq\frac{a+\sqrt{a^2+4b}}{2}\leq \frac{a+a+2\sqrt{b}}{2}=a+\sqrt{b}$.
Therefore, ${\cal A}(t)=x^2\leq (a+\sqrt{b})^2\leq 2(a^2+b)$.
Substituting $a,b$ back leads to
\begin{align*}
    {\cal A}(t) 
    &\leq 2\left(c_2 \frac{d^{5/2}}{(1-\lambda)^2} (1+t)^{-(\alpha-2\beta)}+c_3 (1+t)^{-\beta}\right)^2
    +2c_1 (1-\lambda) (1+t)^{-(1-\alpha)}
        \\
        &\quad + 2c_4 \frac{d^2}{1-\lambda} (1+t)^{-(\alpha-2\beta)}
    \\
    &\overset{(a)}{\leq} 4 c_2^2 \frac{d^5}{(1-\lambda)^4} (1+t)^{-2(\alpha-2\beta)}+ 4 c_3^2 (1+t)^{-2\beta}
    + 2 c_1 (1-\lambda) (1+t)^{-(1-\alpha)}
        \\
        &\quad + 2 c_4 \frac{d^2}{1-\lambda} (1+t)^{-(\alpha-2\beta)}
    \\
    &\leq 4 c_3^2 (1+t)^{-2\beta} + 2 c_1 (1-\lambda) (1+t)^{-(1-\alpha)}+ 4 c_4 \frac{d^2}{1-\lambda} (1+t)^{-(\alpha-2\beta)},
\end{align*}
\noindent
where inequality (a) is due to the fact $(x+y)^2\leq 2 (x^2+y^2)$, the last inequality holds because there exists sufficiently large constant $t_3$ such that, $4 c_2^2 \frac{d^5}{(1-\lambda)^4} (1+t)^{-2(\alpha-2\beta)}\leq 2 c_4 \frac{d^2}{1-\lambda} (1+t)^{-(\alpha-2\beta)} \forall t\geq t_3$ given $\alpha>2\beta$. 
Therefore, set $t_0\eqdef \max\{t_1,t_2,t_3\}$, then for all $t\geq t_0$, we have
\begin{align*}
    {\cal A}(t) &\leq 4 \max\{ c_1 (1-\lambda), c_3^2 , c_4 \frac{d^2}{1-\lambda}\} 
    \cdot \left((1+t)^{-2\beta}+(1+t)^{-(1-\alpha)}+(1+t)^{-(\alpha-2\beta)}\right)
    \\
    &\leq 12 \max\{ c_1 (1-\lambda), c_3^2 , c_4 \frac{d^2}{1-\lambda}\} (1+t)^{-\min\{2\beta,1-\alpha,\alpha-2\beta\}}
\end{align*}
Recall that constant $c_1$ contains $1/\eta_0$, $c_3$ contains $\delta_0$, $c_4$ contains $\eta_0/\delta_0^2$, , thus we can set $\delta_0 = d^{1/3}, \eta_0 = d^{-2/3}$, which yields
\begin{align*}
    {\cal A}(t) &\leq 12 \max\{c_5 (1-\lambda), c_6, \frac{c_7}{1-\lambda}\} d^{2/3} (1+t)^{-\min\{2\beta,1-\alpha,\alpha-2\beta\}}
\end{align*}
where constants 
\begin{align*}
   c_5 =2 \G,\quad 
   c_6 =\frac{4 \max\{L^2, \G^2(1-\beta)\}}{1-2\beta}, \quad
   c_7 =\frac{L\G^2 }{2\beta-\alpha+1}
\end{align*}
do not contain $\eta_0$ and $\delta_0$.
Moreover, note that $\max_{\alpha,\beta}\min\{2\beta,1-\alpha,\alpha-2\beta\}=\frac{1}{3}$, thus it holds
\begin{align*}
    &\frac{1}{1+T}\sum_{k=0}^{T}\EE\norm{\grd {\cal L}(\prm)_k}^2 
    \leq 12 \max\{c_5 (1-\lambda), c_6, \frac{c_7}{1-\lambda}\} d^{2/3} (1+T)^{-1/3}
\end{align*}
where the rate ${\cal O}(1/T^{1/3})$ can be attained by choosing $\alpha=\frac{2}{3}$, $\beta=\frac{1}{6}.$ This immediately leads to Theorem \ref{thm1} by observing 
\[
    \min_{0\leq k \leq T} \EE\norm{\grd {\cal L}(\prm_k)}^2\leq \textstyle \frac{1}{1+T}\sum_{k=0}^{T}\EE\norm{\grd {\cal L}(\prm_k)}^2.
\]
\end{proof}

\section{Non-smooth Analysis}
In this section, we aim to apply our algorithm to non-smooth performative risk optimization problem and analyze its convergence rate. Before presenting the theorem, we need the following Lipschitz loss assumption \ref{assu:lip_ell}.
\begin{assumption}{\bf (Lipschitz Loss)}\label{assu:lip_ell}
There exists constant $L_0>0$ such that 
\[
    \left|\ell(\prm_{1};z)-\ell(\prm_{2};z)\right|\leq L_{0}\norm{\prm_{1}-\prm_{2}}, ~\forall~ \prm_{1},\prm_{2}\in \RR^d, ~\forall ~z\in {\sf Z}
\]
\end{assumption}

Under Assumption \ref{assu:lip_ell} and some other regularity conditions, one can show that the performative risk is also Lipschitz continuous. Formally, this can be stated as follows.
\begin{lemma}\label{lem:composite_lip}
Under Assumption \ref{assu:lip_ell}, \ref{assu:BoundLoss}, \ref{assu:smooth_dist}, the performative risk ${\cal L}(\prm)$ is ($L_0+2 L_1\G $)-Lipschitz continuous.
\end{lemma}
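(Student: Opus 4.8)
The plan is to bound $|{\cal L}(\prm_1) - {\cal L}(\prm_2)|$ by decoupling the two sources of variation in the performative risk: the change in the loss function $\ell(\cdot; z)$ and the change in the data distribution $\Pi_{\prm}$. Writing ${\cal L}(\prm) = \int_{\sf Z} \ell(\prm; z)\, \Pi_{\prm}(\diff z)$, I would insert the cross term $\int_{\sf Z} \ell(\prm_2; z)\, \Pi_{\prm_1}(\diff z)$ and split
\[
{\cal L}(\prm_1) - {\cal L}(\prm_2) = \underbrace{\int_{\sf Z} \left[\ell(\prm_1; z) - \ell(\prm_2; z)\right] \Pi_{\prm_1}(\diff z)}_{=:\, T_1} + \underbrace{\int_{\sf Z} \ell(\prm_2; z)\, (\Pi_{\prm_1} - \Pi_{\prm_2})(\diff z)}_{=:\, T_2}.
\]

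First I would bound $T_1$ using only the Lipschitz continuity of the loss. Since $\Pi_{\prm_1}$ is a probability measure, Assumption~\ref{assu:lip_ell} gives $|T_1| \leq \int_{\sf Z} |\ell(\prm_1;z)-\ell(\prm_2;z)|\, \Pi_{\prm_1}(\diff z) \leq L_0 \norm{\prm_1-\prm_2}$, with no dependence on the distribution shift. Next I would bound $T_2$ using the boundedness of the loss together with the Lipschitz distribution map. Since $|\ell(\prm_2; z)| \leq \G$ uniformly in $z$ by Assumption~\ref{assu:BoundLoss}, integrating a function bounded by $\G$ against the signed measure $\Pi_{\prm_1} - \Pi_{\prm_2}$ is controlled by $\G$ times the total mass of its variation, i.e. $|T_2| \leq \G \cdot 2\,\tv{\Pi_{\prm_1}}{\Pi_{\prm_2}}$, and Assumption~\ref{assu:smooth_dist} then yields $|T_2| \leq 2\G L_1 \norm{\prm_1 - \prm_2}$. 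Combining the two estimates via the triangle inequality gives $|{\cal L}(\prm_1) - {\cal L}(\prm_2)| \leq (L_0 + 2 L_1 \G)\norm{\prm_1 - \prm_2}$, which is exactly the claimed constant.

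The argument is essentially routine, and the main point requiring care is the measure-theoretic handling of $T_2$. Specifically, one must invoke the bound $\big|\int_{\sf Z} f\, \diff(\mu-\nu)\big| \leq \|f\|_\infty \cdot 2\,\tv{\mu}{\nu}$ with the correct factor of $2$: via the Jordan decomposition of $\mu - \nu$, and using that $\mu,\nu$ are both probability measures so the positive and negative parts of $\mu-\nu$ carry equal mass, the total variation of the signed measure equals $2\sup_{A}\big(\mu(A)-\nu(A)\big) = 2\,\tv{\mu}{\nu}$ under the one-sided convention adopted in this paper. Taking $f = \ell(\prm_2;\cdot)$ then delivers the $T_2$ bound without assuming that $\Pi_{\prm}$ admits a density.
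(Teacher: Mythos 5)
Your proposal is correct, and it is essentially the argument the paper intends: the paper states Lemma~\ref{lem:composite_lip} without an explicit proof, but your decomposition into a loss-shift term $T_1$ (controlled by Assumption~\ref{assu:lip_ell}) and a distribution-shift term $T_2$ (controlled by Assumptions~\ref{assu:BoundLoss} and~\ref{assu:smooth_dist}) is exactly the decoupling technique the paper employs in its proof of Lemma~\ref{lem:lip_decoupled_risk}, including the bound $\left|\int \ell(\prm;z)\,(\Pi_{\prm_1}-\Pi_{\prm_2})(\diff z)\right| \leq 2\G\,\tv{\Pi_{\prm_1}}{\Pi_{\prm_2}}$ with the same factor of $2$ arising from the paper's one-sided total-variation convention. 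Your careful justification of that factor via the Jordan decomposition is a correct and welcome addition, and the resulting constant $L_0 + 2L_1\G$ matches the statement.
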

Under non-smooth settings, the convergence behavior can be characterized in both squared gradient norm and proximity gap. Now, we are ready to show the following theorem:
\begin{theorem}\label{thm:non-smooth}
{\bf (\algoname~for Non-smooth Optimization)}
Under Assumption
\ref{assu:lip_ell},
\ref{assu:BoundLoss}, 
\ref{assu:smooth_dist}, \ref{assu:FastMixing}, \ref{assu:smooth_kernel}, with two time-scale step sizes $\eta_k=\eta_0 (1+k)^{-\alpha}, \delta_k=d (1+k)^{-\beta}, \tau_{k}\geq\frac{\log(1+k)}{\log 1/\max\{\rho,\lambda\}}$, where $\alpha,\beta$ satisfies $0<3\beta<\alpha<1$, there exists a constant $t_4$ such that, the iterates $\{\prm_k\}_{k\geq 1}$ satisfies for all $T\geq t_4$
\[
    \frac{1}{1+T}\sum_{k=0}^{T}\EE\norm{\grd{{\cal L}_{\delta_{k}}(\prm_k)}}^2 
    ={\cal O}(T^{-\min\{1-\alpha, \alpha-3\beta\}})
\]
and the following error estimate holds for all $T > 0$ and $\prm\in\mathbb{R}^{d}$
\[
\frac{1}{1+T}\sum_{k=0}^{T}\EE|{\cal L}_{\delta_{k}}(\prm)-{\cal L}(\prm)|={\cal O}(T^{-\beta})
\]
\end{theorem}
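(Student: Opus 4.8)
The plan is to treat the two claims separately: the proximity-gap estimate is an immediate consequence of Lemma~\ref{lem:composite_lip}, whereas the gradient-norm bound requires adapting the proof architecture of Theorem~\ref{thm1} (Lemmas~\ref{lem:decomposition}--\ref{lem:major_bound}) to bound the averaged gradient of the \emph{smoothed} objective ${\cal L}_{\delta_k}$ rather than ${\cal L}$ itself. The essential change relative to the smooth analysis is that Assumption~\ref{assu:Lip} is no longer available; in its place I would use that randomized smoothing turns the merely Lipschitz ${\cal L}$ (Lipschitz with constant $L_0+2L_1\G$ by Lemma~\ref{lem:composite_lip}) into a differentiable function ${\cal L}_\delta$ whose gradient is Lipschitz with constant $L_\delta = {\cal O}(d(L_0+2L_1\G)/\delta)$. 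This blow-up of the effective smoothness constant as $\delta\to 0$ is what ultimately degrades the rate.

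For the proximity gap I would write $|{\cal L}_{\delta_k}(\prm)-{\cal L}(\prm)| = |\EE_{\bm u}[{\cal L}(\prm+\delta_k{\bm u})-{\cal L}(\prm)]|$ and apply the $(L_0+2L_1\G)$-Lipschitz continuity of ${\cal L}$ together with $\|{\bm u}\|\le 1$, giving the pointwise bound $(L_0+2L_1\G)\delta_k$ (this is deterministic once ${\bm u}$ is integrated out). Averaging over $k$ with $\delta_k=d(1+k)^{-\beta}$ and the integral comparison $\sum_{k=0}^T(1+k)^{-\beta}={\cal O}((1+T)^{1-\beta})$ (valid for $\beta<1$) then yields $\frac{1}{1+T}\sum_{k=0}^T\delta_k = {\cal O}(d(1+T)^{-\beta})$, which is exactly the claimed ${\cal O}(T^{-\beta})$.

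For the gradient-norm bound I would first record the descent inequality for the $L_{\delta_k}$-smooth function ${\cal L}_{\delta_k}$ along the update $\prm_{k+1}=\prm_k-\eta_k\sum_m\lambda^{\tau_k-m}g_k^{(m)}$, then repeat the decomposition of Lemma~\ref{lem:decomposition} with ${\cal L}_{\delta_k}$ in place of ${\cal L}$ and with $\grd{\cal L}_{\delta_k}(\prm_k)$ as the quantity being lower-bounded (recall $\EE_{Z\sim\Pi_{\cprm_k}}[g_{\delta_k}(\prm_k;{\bm u}_k,Z)]=\grd{\cal L}_{\delta_k}(\prm_k)$). This produces four analogues $\tilde{\bf I}_1,\dots,\tilde{\bf I}_4$ of $\term{1}$--$\term{4}$. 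Two points need care. First, since the smoothing parameter changes with $k$, the telescope in $\tilde{\bf I}_1$ is inexact; I would insert and subtract ${\cal L}_{\delta_{k+1}}(\prm_{k+1})$ and absorb the mismatch $|{\cal L}_{\delta_k}(\prm_{k+1})-{\cal L}_{\delta_{k+1}}(\prm_{k+1})|\le(L_0+2L_1\G)|\delta_k-\delta_{k+1}|$ into a lower-order term, leaving the dominant ${\cal O}((1-\lambda)(1+T)^\alpha)$ telescope. Second, $\tilde{\bf I}_2$ reuses the machinery of Lemma~\ref{lem:bound_term_2} almost verbatim, except that the drift contribution $A_3$ is now bounded directly by the Lipschitz-loss Assumption~\ref{assu:lip_ell}, avoiding the second-order $\tfrac{L}{2}\|\cdot\|^2$ piece that previously relied on Assumption~\ref{assu:Lip}; the residual $\tilde{\bf I}_3={\cal O}(\sum_k\lambda^{\tau_k}\tfrac{d\G}{\delta_k}\|\grd{\cal L}_{\delta_k}(\prm_k)\|)$ is tamed because $\tau_k\ge\log(1+k)/\log(1/\max\{\rho,\lambda\})$ forces $\lambda^{\tau_k}\le(1+k)^{-1}$, making the associated series $\sum_k(1+k)^{2\beta-2}$ summable for $\beta<\tfrac12$.

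The main obstacle, and the source of the changed rate, is $\tilde{\bf I}_4$. Using $\|g_k^{(m)}\|\le d\G/\delta_k$ together with $L_{\delta_k}={\cal O}(d/\delta_k)$ gives $\tilde{\bf I}_4={\cal O}\!\big(\tfrac{d^3}{1-\lambda}\sum_k \eta_k/\delta_k^3\big)$, i.e.\ an extra factor $1/\delta_k$ compared with the $\eta_k/\delta_k^2$ scaling behind \eqref{eq:i4-bound}; with $\eta_k/\delta_k^3\propto(1+k)^{-(\alpha-3\beta)}$ this is ${\cal O}((1+T)^{1-(\alpha-3\beta)})$, which is precisely why the non-smooth proof requires $\alpha>3\beta$ instead of $\alpha>2\beta$. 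Collecting the four bounds and dividing by $1+T$ yields, after setting ${\cal A}(T)=\tfrac{1}{1+T}\sum_k\EE\|\grd{\cal L}_{\delta_k}(\prm_k)\|^2$, a quadratic inequality ${\cal A}(T)\le a\,{\cal A}(T)^{1/2}+b$ of the same shape as in Lemma~\ref{lem:major_bound}, whose solution gives ${\cal A}(T)={\cal O}(a^2+b)$. Here the squared linear coefficients from $\tilde{\bf I}_2,\tilde{\bf I}_3$ contribute exponents $2(\alpha-2\beta)$ and $1$, while the constants from $\tilde{\bf I}_1,\tilde{\bf I}_4$ contribute $1-\alpha$ and $\alpha-3\beta$; since $2(\alpha-2\beta)\ge\alpha-3\beta$ (equivalent to $\alpha\ge\beta$) and $1\ge 1-\alpha$, the $\tilde{\bf I}_2$ and $\tilde{\bf I}_3$ contributions drop out and the dominant exponent is $\min\{1-\alpha,\alpha-3\beta\}$, as claimed. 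I expect the inexact-telescope bookkeeping and this final exponent comparison to be the only genuinely delicate steps.
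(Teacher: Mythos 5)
Your proposal is correct and follows essentially the same route as the paper's proof: the proximity gap via the pointwise bound $(L_0+2L_1\G)\delta_k$ from Lemma~\ref{lem:composite_lip}, the $\tfrac{d}{\delta_k}(L_0+2L_1\G)$-smoothness of ${\cal L}_{\delta_k}$ from Lemma~\ref{lem:UnbiasedGrad}, the four-term decomposition with the inexact telescope repaired by $|{\cal L}_{\delta_k}(\prm)-{\cal L}_{\delta_{k+1}}(\prm)|\le (L_0+2L_1\G)|\delta_k-\delta_{k+1}|$, the Lipschitz-loss replacement for the drift bound, the dominant ${\cal O}\big(\sum_k \eta_k/\delta_k^3\big)$ variance term that forces $\alpha>3\beta$, and the concluding quadratic-inequality argument with the same exponent comparison yielding $\min\{1-\alpha,\alpha-3\beta\}$. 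The only (immaterial) difference is the residual geometric term: the paper keeps it quadratic in the gradient norm and bounds it directly by the Lipschitz constant of ${\cal L}$ to get ${\cal O}(\log t)$, whereas you apply Cauchy--Schwarz to a linear-in-gradient version; both are lower order and do not affect the rate.
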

\begin{Corollary}
\label{cor:non-smooth}
($\epsilon$-stationarity, $\mu$-proximity)
Suppose Assumptions of Theorem \ref{thm:non-smooth} hold. Fix any $\epsilon,\mu>0$, for $T= \max\{{\cal O}(1/\epsilon^4), {\cal O}(1/\mu^6)\}$, the following estimates hold simultaneously
\[\frac{1}{1+T} \sum_{k=0}^{T}\EE\norm{\grd {\cal L}_{\delta_{k}}(\prm_k)}^2 \leq \epsilon\]
\[\frac{1}{1+T} \sum_{k=0}^{T}\EE\left|{\cal L}_{\delta_{k}}(\prm_{k})-{\cal L}(\prm_{k})\right|\leq \mu\]
\end{Corollary}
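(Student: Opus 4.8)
The plan is to obtain Corollary~\ref{cor:non-smooth} as a direct consequence of the two rate estimates supplied by Theorem~\ref{thm:non-smooth}, by committing to a concrete admissible pair of step-size exponents $(\alpha,\beta)$ and then inverting each rate to read off the number of epochs $T$ needed to drive the corresponding averaged quantity below its target accuracy. First I would fix the exponents so that the abstract rates ${\cal O}(T^{-\min\{1-\alpha,\alpha-3\beta\}})$ and ${\cal O}(T^{-\beta})$ specialize to the explicit powers implicit in the claimed complexity.

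Concretely, I would set $\alpha = 3/4$ and $\beta = 1/6$. Before invoking the theorem I would check that this pair meets the feasibility requirement $0 < 3\beta < \alpha < 1$; indeed $3\beta = 1/2$, so $0 < 1/2 < 3/4 < 1$. With this choice one computes $\min\{1-\alpha,\alpha-3\beta\} = \min\{1/4,1/4\} = 1/4$, so the stationarity estimate reads $\frac{1}{1+T}\sum_{k=0}^{T}\EE\norm{\grd {\cal L}_{\delta_k}(\prm_k)}^2 = {\cal O}(T^{-1/4})$, while the proximity estimate reads $\frac{1}{1+T}\sum_{k=0}^{T}\EE|{\cal L}_{\delta_k}(\prm_k)-{\cal L}(\prm_k)| = {\cal O}(T^{-1/6})$.

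It then remains to invert the two rates. Writing the first bound as $C_1 T^{-1/4}$ for the hidden constant $C_1$, the requirement $C_1 T^{-1/4}\leq\epsilon$ is equivalent to $T \geq (C_1/\epsilon)^4 = {\cal O}(\epsilon^{-4})$; similarly, writing the second as $C_2 T^{-1/6}$, the requirement $C_2 T^{-1/6}\leq\mu$ holds once $T\geq (C_2/\mu)^6 = {\cal O}(\mu^{-6})$. Taking $T = \max\{{\cal O}(\epsilon^{-4}),{\cal O}(\mu^{-6})\}$ then forces both averaged quantities below their respective thresholds simultaneously, which is exactly the assertion. I would also observe that the stationarity estimate in Theorem~\ref{thm:non-smooth} is only asserted for $T\geq t_4$, so for sufficiently small $\epsilon$ the threshold ${\cal O}(\epsilon^{-4})$ already exceeds $t_4$ and this constraint is absorbed into the ${\cal O}(\cdot)$; the proximity estimate holds for all $T>0$ and needs no such caveat.

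The step I expect to require the most care is the selection of $(\alpha,\beta)$, since the two exponents $\min\{1-\alpha,\alpha-3\beta\}$ and $\beta$ are in tension: raising $\beta$ improves the proximity rate but degrades the stationarity rate through the $\alpha-3\beta$ term. One must therefore argue that $\alpha=3/4,\beta=1/6$ is the intended balance realizing the stated $(4,6)$ pair of exponents. Here it helps to note that for fixed $\beta$ the choice $\alpha=(1+3\beta)/2$ equalizes the two competing terms and maximizes the stationarity exponent at $(1-3\beta)/2$, which evaluates to $1/4$ at $\beta=1/6$; everything else is a routine inversion of polynomial rates.
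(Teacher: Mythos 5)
Your proposal is correct and takes essentially the same route as the paper, which disposes of the corollary in a one-line remark: set $\alpha=3/4$, $\beta=1/6$ in Theorem~\ref{thm:non-smooth}, note $\min\{1-\alpha,\alpha-3\beta\}=1/4$, and invert the rates ${\cal O}(T^{-1/4})$ and ${\cal O}(T^{-1/6})$. Your additional observations (verifying $0<3\beta<\alpha<1$, absorbing the $T\geq t_4$ requirement into the ${\cal O}(\epsilon^{-4})$ threshold, and noting that $\alpha=(1+3\beta)/2$ balances the two competing exponents) are all sound elaborations of the same argument.
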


Next, we present the proof of Theorem \ref{thm:non-smooth}.
\begin{proof}
This proof resembles the proof of Lemma \ref{lem:bound_four_terms}, where we reinterpret $\sum_{k=0}^{t}\EE\norm{\grd {\cal L}(\prm_{k})}^2$ as $\sum_{k=0}^{t}\EE\norm{\grd {\cal L}_{\delta_{k}}(\prm_{k})}^2$, and ${\cal L}(\prm_{k})$ as ${\cal L}_{\delta_{k}}(\prm_{k})$, with additional bias terms that, as we shall prove, are not dominant.

Due to Lemma \ref{lem:composite_lip}, $\cal{L}(\prm)$ is $(L_0+2 L_1\G) $-Lipschitz. Then by Lemma \ref{lem:UnbiasedGrad}, ${\cal L}_{\delta}(\prm)$ is $\frac{d}{\delta}(L_0 + 2 L_1 \G)$-smooth for all $\delta>0$. Similar to Lemma \ref{lem:decomposition}, we have
\begin{align*}
    &{\cal L}_{\delta_{k}}(\prm_{k+1})-{\cal L}_{\delta_{k}}(\prm_{k})+\frac{\eta_{k}}{1-\lambda}\Pscal{\grd{\cal L}_{\delta_{k}}(\prm_{k})}{(1-\lambda)\sum_{m=1}^{\tau_{k}}\lambda^{\tau_{k}-m}g_{k}^{(m)}} 
    \\
    &\leq\frac{d (L_0+2 L_1\G) }{2\delta_{k}}\eta_{k}^2\norm{\sum_{m=1}^{\tau_{k}}\lambda^{\tau_{k}-m}g_{k}^{(m)}}^2
\end{align*}

By adding, subtracting and rearranging terms, after taking conditional expectation on ${\cal F}^{k-1}$, it holds that
\begin{align*}
    \frac{\eta_{k}}{1-\lambda}\norm{\grd {\cal L}_{\delta_{k}}(\prm_k)}^2 &\leq \EE_{{\cal F}^{k-1}}\left[{\cal L}_{\delta_{k}}(\prm_{k}) - {\cal L}_{\delta_{k+1}}(\prm_{k+1}) + {\cal L}_{\delta_{k+1}}(\prm_{k+1}) - {\cal L}_{\delta_{k}}(\prm_{k+1}) \right]
    \\
    &\quad +\frac{\eta_{k}}{1-\lambda} \EE_{{\cal F}^{k-1}}\Pscal{\grd {\cal L}_{\delta_{k}}(\prm_k)}{\grd {\cal L}_{\delta_{k}}(\prm_k)-(1-\lambda)\sum_{m=1}^{\tau_{k}}\lambda^{\tau_{k}-m}g_{k}^{(m)}} 
    \\
    &\quad + \frac{d}{2\delta_{k}}(L_0 + 2 L_1 \G)\eta_{k}^2\EE_{{\cal F}^{k-1}}\norm{\sum_{m=1}^{\tau_{k}}\lambda^{\tau_{k}-m}g_{k}^{(m)}}^2
\end{align*}
By Lemma \ref{lem:UnbiasedGrad}, we have $\EE_{Z\sim\Pi_{\cprm_k},u_{k}} [g_{\delta_{k}}(\prm_k;u_k,Z)]=\grd {\cal L}_{\delta_{k}}(\prm_{k})$,
then by dividing and summing over $k$, it holds that
\begin{align*}
    &(1-\lambda)\sum_{k=0}^{t}\EE\norm{\grd {\cal L}_{\delta_{k}}(\prm_k)}^2 
    \\
    &\leq \sum_{k=0}^{t} \frac{1-\lambda}{\eta_{k}} \EE\left[ {\cal L}_{\delta_{k}}(\prm_{k})- {\cal L}_{\delta_{k+1}}(\prm_{k+1}) + {\cal L}_{\delta_{k+1}}(\prm_{k+1}) - {\cal L}_{\delta_{k}}(\prm_{k+1})  \right]
    \\
    &\quad +(1-\lambda)\sum_{k=0}^{t} \EE\Pscal{\grd {\cal L}_{\delta_{k}}(\prm_k)}{\sum_{m=1}^{\tau_{k}}\lambda^{\tau_{k}-m}\Big(\EE_{Z\sim\Pi_{\cprm_k}} [g_{\delta_{k}}(\prm_k;u_k,Z)]-g_{k}^{(m)}\Big)}
    \\
    &\quad +\sum_{k=0}^{t}\lambda^{\tau_{k}}\EE\norm{\grd {\cal L}_{\delta_{k}}(\prm_k)}^2
    \\
    &\quad + \frac{d (L_0 + 2 L_1 \G)(1-\lambda)}{2}\sum_{k=0}^{t}\frac{\eta_{k}}{\delta_{k}}\EE\norm{\sum_{m=1}^{\tau_{k}}\lambda^{\tau_{k}-m}g_{k}^{(m)}}^2
    \\
    &\eqdef \term{5} + \term{6} + \term{7} + \term{8}
\end{align*}
After splitting RHS into $\term{5},\term{6},\term{7},\term{8}$, we can bound them separately. 

Under Assumption \ref{assu:BoundLoss} and the estimate $\delta_{k}-\delta_{k+1}=\Theta(k^{-\beta-1})$, it holds that
\begin{align*}
\term{5} &=(1-\lambda)\sum_{k=0}^{t}\frac{1}{\eta_{k}}\EE\left[{\cal L}_{\delta_{k}}(\prm_{k})-{\cal L}_{\delta_{k+1}}(\prm_{k+1})\right]+(1-\lambda)\sum_{k=0}^{t}\frac{1}{\eta_{k}}\EE\left[{\cal L}_{\delta_{k+1}}(\prm_{k+1})-{\cal L}_{\delta_{k}}(\prm_{k+1})\right]
\\
&\overset{(a)}{\leq} (1-\lambda ) \G \frac{2}{\eta_{t+1}} + (1-\lambda)\sum_{k=0}^{t}\EE\frac{{\cal L}_{\delta_{k+1}}(\prm_{k}) - {\cal L}_{\delta_{k}}(\prm_{k})}{\eta_{k}}
\\
&\overset{(b)}{\leq} (1-\lambda ) \G \frac{2}{\eta_{t+1}} + (1-\lambda)(L_0+2 L_1\G) \sum_{k=0}^{t}\frac{\delta_{k}-\delta_{k+1}}{\eta_{k}}
\\
&={\cal O}\left((1+t)^{\alpha}+(1+t)^{\alpha-\beta}\right)={\cal O}\left((1+t)^{\alpha}\right)
\end{align*}
where we apply the summation by part in inequality (a) as in Lemma \ref{lem:bound_term_1}, and use the fact $|{\cal L}_{\delta_1}(\prm)-{\cal L}_{\delta_2}(\prm)|\leq\EE_{w}|{\cal L}(\prm+\delta_1 w)-{\cal L}(\prm+\delta_2 w)|\leq (L_0+2 L_1\G)  |\delta_1-\delta_2|$ in inequality (e), as a consequence of Lipschitz continuity. 

As for $\term{6}$, if we let ${\cal B}(t)\eqdef\frac{1}{1+t}\sum_{k=0}^{t}\EE\norm{\grd{\cal L}_{\delta_{k}}(\prm_{k})}^2$, by definition of $g_{k}^{(m)}$, we can split the term as follows
\begin{align*}
&\EE_{{\cal F}^{k-1}} \frac{d}{\delta_{k}}  \left(\EE_{Z\sim \Pi_{\cprm_k}}[\ell(\cprm_k; Z)|\cprm_{k}] - \EE[\ell(\cprm_{k}^{(m)}; Z_{k}^{(m)})|\cprm_{k}^{(m)}, Z_{k}^{(0)}] \right)
    \\
    &= \EE_{{\cal F}^{k-1}}\frac{d}{\delta_{k}} \EE_{Z\sim\Pi_{\cprm_{k}}} \left[ \ell(\cprm_k; Z) - \ell(\cprm_{k}^{(m)}; Z)|\cprm_{k}^{(m)}, \cprm_{k}\right] 
    \\
    &\quad + \EE_{{\cal F}^{k-1}} \frac{d}{\delta_{k}}   \left(\EE_{Z\sim \Pi_{\cprm_k}}[\ell(\cprm_{k}^{(m)}; Z)|\cprm_{k}^{(m)}] - \EE_{\Tilde{Z}_{k}^{(m)}}[\ell(\cprm_{k}^{(m)}; \Tilde{Z}_{k}^{(m)})|\cprm_{k}^{(m)}, \Tilde{Z}_{k}^{(0)}] \right)
    \\
    &\quad + \EE_{{\cal F}^{k-1}}\frac{d}{\delta_{k}}  \left(\EE_{\Tilde{Z}_{k}^{(m)}}[\ell(\cprm_{k}^{(m)}; \Tilde{Z}_{k}^{(m)})|\cprm_{k}^{(m)}, \Tilde{Z}_{k}^{(0)}]  - \EE[\ell(\cprm_{k}^{(m)}; Z_{k}^{(m)})|\cprm_{k}^{(m)}, Z_{k}^{(0)}] \right)
\end{align*}
By applying Jensen's inequality and triangle inequality according to the above splitting, it holds that
\begin{align*}
&\norm{\EE_{{\cal F}^{k-1}}\EE_{Z\sim\Pi_{\cprm_k}} [g_{\delta_{k}}(\prm_k;u_k,Z)]-g_{k}^{(m)}}
\\
=&\left|\frac{d}{\delta_{k}}|\EE_{{\cal F}^{k-1}}   \EE_{Z\sim \Pi_{\cprm_k}}[\ell(\cprm_k; Z)|\cprm_{k}] - \EE[\ell(\cprm_{k}^{(m)}; Z_{k}^{(m)})|\cprm_{k}^{(m)}, Z_{k}^{(0)}] \right|
\\
\leq&\EE_{{\cal F}^{k-1}}\frac{d}{\delta_{k}}\left|   \EE_{Z\sim \Pi_{\cprm_k}}[\ell(\cprm_k; Z)|\cprm_{k}] - \EE[\ell(\cprm_{k}^{(m)}; Z_{k}^{(m)})|\cprm_{k}^{(m)}, Z_{k}^{(0)}] \right|
\\
\leq& \EE_{{\cal F}^{k-1}}  \frac{d}{\delta_{k}}\left|\EE_{Z\sim\Pi_{\cprm_{k}}} \left[ \ell(\cprm_k; Z) - \ell(\cprm_{k}^{(m)}; Z)|\cprm_{k}^{(m)}, \cprm_{k}\right]\right|
    \\
    &\quad + \EE_{{\cal F}^{k-1}} \frac{d}{\delta_{k}}   \left|\EE_{Z\sim \Pi_{\cprm_k}}[\ell(\cprm_{k}^{(m)}; Z)|\cprm_{k}^{(m)}] - \EE_{\Tilde{Z}_{k}^{(m)}}[\ell(\cprm_{k}^{(m)}; \Tilde{Z}_{k}^{(m)})|\cprm_{k}^{(m)}, \Tilde{Z}_{k}^{(0)}] \right|
    \\
    &\quad + \EE_{{\cal F}^{k-1}}\frac{d}{\delta_{k}} \left|\EE_{\Tilde{Z}_{k}^{(m)}}[\ell(\cprm_{k}^{(m)}; \Tilde{Z}_{k}^{(m)})|\cprm_{k}^{(m)}, \Tilde{Z}_{k}^{(0)}]  - \EE[\ell(\cprm_{k}^{(m)}; Z_{k}^{(m)})|\cprm_{k}^{(m)}, Z_{k}^{(0)}] \right|
    \\
    \overset{(c)}{\leq} &\frac{d}{\delta_{k}}\EE_{{\cal F}^{k-1}}L_0\norm{\cprm_{k}^{(m)}-\cprm_{k}}
    \\
    & \quad + \frac{2d\G}{\delta_{k}}\EE_{{\cal F}^{k-1}} \tv{\Pi_{\prm_{k}}}{\PP(\hat{Z}_{k}^{(m)}\in\cdot|\cprm_{k}^{(0)},\hat{Z}_{k}^{(0)})}\\
    &\quad + \frac{2d\G}{\delta_{k}}\EE_{{\cal F}^{k-1}} \tv{\PP(\hat{Z}_{k}^{(m)}\in\cdot|\cprm_{k}^{(0)},\hat{Z}_{k}^{(0)})}{\PP(Z_{k}^{(m)}\in\cdot|\cprm_{k}^{(0)},Z_{k}^{(0)})}
    \\
    \overset{(d)}{\leq} & \frac{d L_0}{\delta_{k}} \EE_{{\cal F}^{k-1}}\norm{\cprm_{k}^{(m)}-\cprm_{k}}+\frac{2d\G}{\delta_{k}}M\rho^{m}+\frac{2d L_2\G}{\delta_{k}}\EE_{{\cal F}^{k-1}}\sum_{\ell=1}^{m-1}\norm{\cprm_{k}^{(\ell)}-\cprm_{k}}
    \\
    \leq & \frac{d L_0}{\delta_{k}} d\G\sum_{j=1}^{m-1}\lambda^{\tau_{k}-j}\frac{\eta_{k}}{\delta_{k}}+\frac{2d\G M}{\delta_{k}}\rho^{m}+\frac{2d L_{2}\G}{\delta_{k}}d\G\sum_{\ell=1}^{m-1}\sum_{j=1}^{\ell-1}\lambda^{\tau_{k}-j}\frac{\eta_{k}}{\delta_{k}}
    \\
    < & d^2 L_0\G\frac{\eta_{k}}{\delta_{k}^2}\frac{\lambda^{\tau_{k}-m+1}}{1-\lambda}+\frac{2d\G M}{\delta_{k}}\rho^{m}+2d^2 L_2\G^2\frac{\eta_{k}}{\delta_{k}^2}\frac{\lambda^{\tau_{k}-m+2}}{(1-\lambda)^2}
\end{align*}
where inequality (c) is due to Lipschitzness of decoupled risk, inequality (d) is due to Assumption \ref{assu:FastMixing} and  Lemma \ref{lem:tv_summation_bound} (a consequence of Assumption \ref{assu:smooth_kernel}). 
Given $\tau_{k}\geq\frac{\log(1+k)}{\log 1/\max\{\rho,\lambda\}}$, then the following deterministic bound holds for all $k>0$,
\begin{align*}
    & \EE_{{\cal F}^{k-1}}\sum_{m=1}^{\tau_{k}}\lambda^{\tau_{k}-m}\norm{\EE_{Z\sim\Pi_{\cprm_k}} [g_{\delta_{k}}(\prm_k;u_k,Z)]-g_{k}^{(m)}} 
    \\
    \leq & d^2 L_0\G\frac{\eta_{k}}{\delta_{k}^2}\frac{\lambda}{1-\lambda}\sum_{m=1}^{\tau_{k}}\lambda^{\tau_{k}-m}+2d^2 L_2\G^2\frac{\eta_{k}}{\delta_{k}^2}\frac{\lambda^2}{1-\lambda}\sum_{m=1}^{\tau_{k}}\lambda^{\tau_{k}-m} + 2d\G M/\delta_{k}\sum_{m=1}^{\tau_{k}}\rho^m\lambda^{\tau_{k}-m}
    \\
    < & d^2 \frac{\lambda}{(1-\lambda)^2} L_0\G\frac{\eta_{k}}{\delta_{k}^2}+2d^2 \frac{\lambda^2}{(1-\lambda)^2} L_2\G^2\frac{\eta_{k}}{\delta_{k}^2}+2d \G M/\delta_{k}\sum_{m=1}^{\tau_{k}}\max\{\rho,\lambda\}^{\tau_{k}}
    \\
    \leq & d^2 \frac{\lambda}{(1-\lambda)^2} L_0\G\frac{\eta_{k}}{\delta_{k}^2}+2d^2 \frac{\lambda^2}{(1-\lambda)^2} L_2\G^2\frac{\eta_{k}}{\delta_{k}^2}+2d \G M \frac{\tau_{k}}{(1+k)\delta_{k}}
\end{align*}
So for sufficiently large $t$, it holds that
\begin{align*}
    \term{6}&\leq (1-\lambda)\sum_{k=0}^{t}\EE\norm{\grd{\cal L}(\prm_{k})}\EE_{{\cal F}^{k-1}}\norm{\sum_{m=1}^{\tau_{k}}\lambda^{\tau_{k}-m}\EE_{Z\sim\Pi_{\cprm_k}} [g_{\delta_{k}}(\prm_k;u_k,Z)]-g_{k}^{(m)}} 
    \\
    &\leq (1-\lambda)\sum_{k=0}^{t}\EE\norm{\grd{\cal L}(\prm_{k})}\sum_{m=1}^{\tau_{k}}\lambda^{\tau_{k}-m}\EE_{{\cal F}^{k-1}}\norm{\EE_{Z\sim\Pi_{\cprm_k}} [g_{\delta_{k}}(\prm_k;u_k,Z)]-g_{k}^{(m)}} 
    \\
    &\leq \sum_{k=0}^{t}\EE\norm{\grd{\cal L}(\prm_{k})} d^2 \frac{\lambda}{1-\lambda}((L_0+2 L_1\G) \G+2 L_1\G+2\lambda L_2\G^2)\frac{\eta_{k}}{\delta_{k}^2}
    +\sum_{k=0}^{t}\EE\norm{\grd{\cal L}(\prm_{k})}2d \G M \frac{\tau_{k}}{(1+k)\delta_{k}}
    \\
    &\leq\sum_{k=0}^{t}\EE\norm{\grd{\cal L}(\prm_{k})} d^2 \frac{2\lambda}{(1-\lambda)^2}(L_0 \G+2 L_1\G+2\lambda L_2\G^2)\frac{\eta_{k}}{\delta_{k}^2}
    \\
    &= d^2 \frac{2\lambda}{(1-\lambda)^2}( L_0\G+2 L_1\G+2\lambda L_2\G^2) \sum_{k=0}^{t}\EE\norm{\grd{\cal L}(\prm_{k})}\frac{\eta_{k}}{\delta_{k}^2}
    \\
    &\leq d^2 \frac{2\lambda}{(1-\lambda)^2}(L_0 \G+2 L_1\G+2\lambda L_2\G^2) \Big(\sum_{k=0}^{t}\EE\norm{\grd{\cal L}(\prm_{k})}^2\Big)^{1/2} \Big(\sum_{k=0}^{t}\frac{\eta_{k}^2}{\delta_{k}^4}\Big)^{1/2}
    \\
    &\leq c_9 d^2 {\cal B}(t)^{1/2} (1+t)^{\frac{1}{2}+\frac{1}{2}-(\alpha-2\beta)}
\end{align*}
Therefore, there exists a constant $c_9>0$ such that
\[
\term{6} \leq c_9 d^2 {{\cal B}(t)}^{1/2}(1+t)^{1-(\alpha-2\beta)}
\]
where there is an extra $\beta/2$ in exponent because the $L$ in $c_2$ is now a variable $d (L_0+2 L_1\G) /\delta_{k}$.

For ($L_0+2 L_1\G$)-Lipschitz continuous ${\cal L}(\prm)$, for all $\delta>0$ it holds that $\norm{\grd {\cal L}_{\delta}(\prm)}\leq (L_0+2 L_1\G) $. Given $\tau_{k}\geq\frac{\log(1+k)}{\log 1/\max\{\rho,\lambda\}}$, it holds that $\lambda^{\tau_{k}}\EE\norm{\grd {\cal L}_{\delta_{k}}(\prm_{k})}^2\leq \frac{d L^2}{\delta_{0}(1+k)}$,  then $\term{7}$ can be bounded as follows
\[
\term{7} \leq \frac{d L^2}{\delta_{0}}\sum_{k=0}^{t} (1+k)^{-1}={\cal O}(\log(1+t))
\]

$\term{8}$ is similar to $\term{4}$. For all $0\leq k\leq t, 1\leq m\leq \tau_{k}$, it holds that $\norm{g_{k}^{(m)}}\leq\frac{d \G}{\delta_{k}}$, which implies
\begin{align*}
\term{8}&\leq (1-\lambda)\frac{d (L_0 + 2 L_1 \G)}{2}\sum_{k=0}^{t}\frac{\eta_{k}}{\delta_{k}}\EE\norm{\sum_{m=1}^{\tau_{k}}\lambda^{\tau_{k}-m}g_{k}^{(m)}}^2
\\
&\leq (1-\lambda)\frac{d (L_0 + 2 L_1 \G)}{2}\sum_{k=0}^{t}\frac{\eta_{k}}{\delta_{k}}\EE\Big(\sum_{m=1}^{\tau_{k}}\lambda^{\tau_{k}-m}\norm{g_{k}^{(m)}}\Big)^2
\\
&\leq (1-\lambda)\frac{d (L_0 + 2 L_1 \G)}{2}\sum_{k=0}^{t}\frac{\eta_{k}}{\delta_{k}}\Big(\sum_{m=1}^{\tau_{k}}\lambda^{\tau_{k}-m}\frac{d \G}{\delta_{k}}\Big)^2
\\
&= (1-\lambda)\frac{d^3 (L_0 + 2 L_1 \G) \G^2}{2}\sum_{k=0}^{t}\frac{\eta_{k}}{\delta_{k}^3}\Big(\sum_{m=1}^{\tau_{k}}\lambda^{\tau_{k}-m}\Big)^2
\\
&\leq \frac{d^3 (L_0 + 2 L_1 \G) \G^2}{2(1-\lambda)}\sum_{k=0}^{t}\frac{\eta_{k}}{\delta_{k}^3}\leq c_{10} (1+t)^{1-\left(\alpha-3\beta\right)}
\end{align*}
where $c_{10}>0$ is a constant hiding the factor $\frac{\eta_{0}}{\delta_{0}^3}$.

Applying quadratic technique in Lemma \ref{lem:major_bound}, and for all $\alpha,\beta$ satisfying $0<3\beta<\alpha<1$, it is clear that only $\term{5}$ and $\term{8}$ contribute to the asymptotic rate, so for all $t\geq t_4$ (for some constant $t_4>0$), we have
\[
\frac{1}{1+T}\sum_{k=0}^{T}\EE\norm{\grd{{\cal L}_{\delta_{k}}(\prm_k)}}^2 
    ={\cal O}(T^{-\min\{1-\alpha, \alpha-3\beta\}})
\]
The error estimate directly follows from Lemma \ref{lem:composite_lip}. 
\end{proof}
\begin{remark}
Note that Corrollary \ref{cor:non-smooth} follows directly from Theorem \ref{thm:non-smooth} by setting $\alpha=3/4$ and $\beta=1/6$.
\end{remark}
\section{Auxiliary Lemmas}

\begin{lemma}\label{lem:UnbiasedGrad} {\bf (Smoothing)} For continuous ${\cal L}(\prm):\mathbb{R}^d\rightarrow\mathbb{R}$, its smoothed approximation ${\cal L}_{\delta}(\prm)\eqdef\EE_{w\sim \unif(\BB)}[{\cal L}(\prm+\delta w)]$ is differentiable, and it holds that 
\[
\EE_{\substack{u\sim \unif(\SS),\\ Z\sim \Pi_{\prm+\delta u}}} [g_{\delta}(\prm; u, Z)]=\grd {\cal L}_{\delta}(\prm)
\]
Moreover, if ${\cal L}(\prm)$ is $\bar{L}$-Lipschitz continuous, then ${\cal L}_{\delta}(\prm)$ is $\frac{d}{\delta} \bar{L}$-smooth.
\end{lemma}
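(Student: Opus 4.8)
The plan is to reduce the statement to the classical spherical-smoothing identity of \citet{flaxman2005, Nesterov2017} and then read off both conclusions. First I would dispose of the inner expectation over $Z$: since ${\cal L}(\cprm) = \EE_{Z\sim\Pi_{\cprm}}[\ell(\cprm;Z)]$ by definition of the performative risk, the tower property gives
\[
\EE_{u\sim\unif(\SS),\,Z\sim\Pi_{\prm+\delta u}}[g_{\delta}(\prm;u,Z)] = \frac{d}{\delta}\,\EE_{u\sim\unif(\SS)}\big[{\cal L}(\prm+\delta u)\,u\big].
\]
Thus the first claim is equivalent to the purely deterministic identity $\grd{\cal L}_{\delta}(\prm) = \tfrac{d}{\delta}\EE_{u\sim\unif(\SS)}[{\cal L}(\prm+\delta u)u]$ for the ball average ${\cal L}_{\delta}(\prm) = \EE_{w\sim\unif(\BB)}[{\cal L}(\prm+\delta w)]$, and the Markov kernel drops out of the problem entirely.

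To establish this identity I would write the ball average as a normalized integral, ${\cal L}_{\delta}(\prm) = \frac{1}{V_d\delta^d}\int_{\|v\|\le\delta}{\cal L}(\prm+v)\,dv$ with $V_d \eqdef \mathrm{vol}(\BB)$, and differentiate under the integral sign after translating the domain to $\prm$. Applying the divergence theorem component-wise to the vector field $v\mapsto{\cal L}(\prm+v)e_i$ on the ball $\delta\BB$, whose outward unit normal at a boundary point $v$ is $v/\|v\|$, converts the volume integral of the gradient into a surface integral over the sphere $\delta\SS$:
\[
\grd_{\prm}\!\int_{\|v\|\le\delta}\!{\cal L}(\prm+v)\,dv = \int_{\|v\|=\delta}{\cal L}(\prm+v)\,\frac{v}{\|v\|}\,dS(v).
\]
Substituting $v=\delta u$ (so that $dS(v)=\delta^{d-1}\,dS(u)$ and $v/\|v\|=u$), normalizing by the sphere's surface area $S_{d-1}$ to turn the surface integral into $\EE_{u\sim\unif(\SS)}$, and invoking the elementary ratio $S_{d-1}/V_d = d$ produces the stated $\tfrac{d}{\delta}$ factor. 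Differentiability of ${\cal L}_{\delta}$ then follows since the resulting right-hand side is continuous in $\prm$ (continuity of ${\cal L}$ and dominated convergence on the compact sphere).

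For the smoothness claim I would simply differentiate the gradient formula: for any $\prm_1,\prm_2$,
\[
\grd{\cal L}_{\delta}(\prm_1) - \grd{\cal L}_{\delta}(\prm_2) = \frac{d}{\delta}\,\EE_{u\sim\unif(\SS)}\big[\big({\cal L}(\prm_1+\delta u)-{\cal L}(\prm_2+\delta u)\big)\,u\big],
\]
so taking norms and using $\|u\|=1$ on $\SS$, Jensen's inequality, and the $\bar L$-Lipschitz continuity of ${\cal L}$ yields $\|\grd{\cal L}_{\delta}(\prm_1)-\grd{\cal L}_{\delta}(\prm_2)\| \le \tfrac{d}{\delta}\bar L\,\|\prm_1-\prm_2\|$, i.e.\ $\tfrac{d}{\delta}\bar L$-smoothness.

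The main obstacle is that the divergence-theorem step is only immediate when ${\cal L}\in C^1$, whereas the lemma assumes merely continuity (and, in the second part, Lipschitzness). I would handle this by a standard mollification argument: approximate ${\cal L}$ by smooth functions ${\cal L}_{\varepsilon}$ converging uniformly on compact sets, prove the identity for each ${\cal L}_{\varepsilon}$ via the divergence theorem, and pass to the limit $\varepsilon\to0$, using the uniform convergence to exchange limit and integral in both the ball average and the sphere average. Everything else reduces to the elementary geometry of the ball and sphere together with the one-line Lipschitz estimate above.
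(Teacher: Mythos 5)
Your proof is correct and follows essentially the same route as the paper's: both reduce the claim via the tower property to the deterministic ball-to-sphere identity, establish that identity through the divergence/Stokes theorem together with the volume-to-surface-area ratio $\delta/d$ of the ball, and obtain the $\frac{d}{\delta}\bar{L}$-smoothness from the same one-line Lipschitz estimate under the sphere average. Your mollification argument to justify the divergence-theorem step under mere continuity is a welcome extra degree of rigor at the point where the paper simply invokes a ``generalized'' Stokes theorem.
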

\begin{proof}
The first fact follows from (generalized) Stoke's theorem. Given continuous ${\cal L}(\prm)$, it holds that
\beq\label{eqn:stokes}
    \grd\int_{\delta\BB}{\cal L}(\prm + v)\diff v = \int_{\delta \SS}{\cal L}(\prm + r) \frac{r}{\norm{r}}\diff r
\eeq
Observe that the RHS of Equation (\ref{eqn:stokes}) is continuous in $\prm$, which implies ${\cal L}_{\delta}(\prm)=\frac{1}{{\sf vol}(\delta\BB)}\int_{\delta\BB}{\cal L}(\prm + v)\diff v $ is differentiable. 
Note that the volume to surface area ratio of $\delta\BB$ is $\delta/d$, so it follows from Equation (\ref{eqn:stokes}) that 
\begin{align*}
\grd {\cal L}_{\delta}(\prm)&=\frac{{\sf vol}(\delta\SS)}{{\sf vol}(\delta\BB)}\int_{\delta \SS}{\cal L}(\prm + r) \frac{r}{{\sf vol}(\delta\SS)\norm{r}}\diff r
=\frac{d}{\delta}\EE_{u\sim \unif(\SS)}[{\cal L}(\prm+\delta u)u]
\\
&=\EE_{u\sim \unif(\SS)}\EE_{Z\sim\pi_{\prm+\delta u}}[\frac{d}{\delta}\ell(\prm+\delta u;Z)u]
=\EE_{\substack{u\sim \unif(\SS),\\ Z\sim \Pi_{\prm+\delta u}}} [g_{\delta}(\prm; u, Z)]
\end{align*}
where we use the definition of $g_{\delta}(\prm;u,z)$ in the last equality.

If further assuming ${\cal L}(\prm)$ is $\bar{L}$-Lipschitz continuous, then we obtain
\begin{align*}
    \norm{\grd {\cal L}_{\delta}(\prm_1)-\grd {\cal L}_{\delta}(\prm_2)}
    &=\frac{d}{\delta}\cdot \norm{\frac{1}{{\sf vol}(\SS)}\int_{\SS} \left[{\cal L}(\prm_1+\delta u)-{\cal L}(\prm_2+\delta u)\right] u \diff u}
    \\
    &\leq \frac{d}{\delta}\cdot  \bar{L} \norm{\prm_1-\prm_2}.
\end{align*}

\end{proof}

\begin{lemma}\label{lem:obs}{\bf (${\cal O}(\delta)$-Biased Gradient Estimation)}\label{lem:BoundedBias}

Under Assumptions \ref{assu:Lip},  fix a proximity parameter $\delta>0$, it holds that
\begin{align*}
    \norm{\EE_{\substack{u\sim \unif(\SS),\\ Z\sim \Pi_{\prm+\delta u}}} [g_{\delta}(\prm; u, Z)] - \grd {\cal L}(\prm) } &= \norm{\grd {\cal L}_{\delta}(\prm) - \grd {\cal L}(\prm)}
        \leq \delta L
\end{align*}
\end{lemma}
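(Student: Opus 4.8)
The plan is to reduce the statement to two ingredients: the exact identity already furnished by Lemma~\ref{lem:UnbiasedGrad}, and a Jensen-plus-Lipschitz estimate on the smoothed gradient. The equality claim is immediate, since Lemma~\ref{lem:UnbiasedGrad} gives $\EE_{u\sim\unif(\SS),\,Z\sim\Pi_{\prm+\delta u}}[g_{\delta}(\prm;u,Z)]=\grd{\cal L}_{\delta}(\prm)$; thus it only remains to prove the inequality $\norm{\grd{\cal L}_{\delta}(\prm)-\grd{\cal L}(\prm)}\le\delta L$.

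To establish this, I would first rewrite the smoothed gradient as an average of true gradients over the ball. Starting from ${\cal L}_{\delta}(\prm)=\EE_{w\sim\unif(\BB)}[{\cal L}(\prm+\delta w)]$, interchanging differentiation and expectation yields $\grd{\cal L}_{\delta}(\prm)=\EE_{w\sim\unif(\BB)}[\grd{\cal L}(\prm+\delta w)]$. This interchange is the single point needing a word of justification: it is legitimate because Assumption~\ref{assu:Lip} makes ${\cal L}$ differentiable with an $L$-Lipschitz (hence locally bounded) gradient, while the averaging is over the compact set $\BB$, so the dominated convergence / Leibniz rule applies.

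With this representation in hand, the bound is a short computation. Writing $\grd{\cal L}(\prm)=\EE_{w\sim\unif(\BB)}[\grd{\cal L}(\prm)]$ and subtracting,
\[
\norm{\grd{\cal L}_{\delta}(\prm)-\grd{\cal L}(\prm)}=\norm{\EE_{w}\big[\grd{\cal L}(\prm+\delta w)-\grd{\cal L}(\prm)\big]}\le\EE_{w}\norm{\grd{\cal L}(\prm+\delta w)-\grd{\cal L}(\prm)},
\]
where the inequality is Jensen applied to the convex norm. Invoking Assumption~\ref{assu:Lip} on the integrand gives $\norm{\grd{\cal L}(\prm+\delta w)-\grd{\cal L}(\prm)}\le L\norm{\delta w}=\delta L\norm{w}$, and since $\norm{w}\le1$ for every $w\in\BB$ I conclude $\EE_{w}[\delta L\norm{w}]\le\delta L$, which finishes the argument.

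I do not anticipate any genuine obstacle: the statement is the standard Flaxman--Nesterov smoothing estimate, and the only technical care lies in the differentiation-under-the-integral step noted above, which is routine under the stated smoothness. Moreover, the $\EE_{w}$-averaging structure mirrors exactly the one already exploited in the proof of Lemma~\ref{lem:UnbiasedGrad}, so the argument stays consistent with the paper's notation and requires no new machinery.
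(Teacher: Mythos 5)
Your proof is correct and follows essentially the same route as the paper's own argument: invoke Lemma~\ref{lem:UnbiasedGrad} for the equality, rewrite $\grd {\cal L}_{\delta}(\prm)$ as $\EE_{w\sim \unif(\BB)}[\grd {\cal L}(\prm+\delta w)]$ by exchanging gradient and expectation, then conclude via Jensen's inequality and Assumption~\ref{assu:Lip} together with $\norm{w}\leq 1$. Your added justification of the differentiation-under-the-integral step (which the paper asserts without comment) is a harmless refinement, not a departure.
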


\begin{proof}
By Lemma \ref{lem:UnbiasedGrad}, we have
\[
\EE_{\substack{u\sim \unif(\SS),\\ Z\sim\Pi_{\prm+\delta u}}} [g(\prm;u,Z)] = \grd {\cal L}_{\delta}(\prm)
\]

Note that when ${\cal L}(\prm)$ is differentiable, we have
\[
\grd {\cal L}_{\delta}(\prm)= \grd \left[\EE_{w\sim\unif(\BB)}{\cal L}(\prm+\delta w)\right]=\EE_{w\sim\unif(\BB)}\grd{\cal L}(\prm+\delta w)
\]
Then under Assumption \ref{assu:Lip}, by linearity of expectation and Jensen's inequality, it holds that
\begin{align*}
    \norm{\grd {\cal L}_{\delta}(\prm)-\grd {\cal L}(\prm)}=\norm{\EE_{w\sim \unif(\BB)}[\grd {\cal L}(\prm +\delta w) - \grd {\cal L}(\prm)]}
        \leq \delta L.
\end{align*}
\end{proof}

\begin{Corollary}\label{cor:bdd_grad}
Under Assumption \ref{assu:Lip} and \ref{assu:BoundLoss}, for all $\prm\in\mathbb{R}^d$, it holds that
\[
    \norm{\grd{\cal L}(\prm)}\leq 2\sqrt{LG}
\]
\end{Corollary}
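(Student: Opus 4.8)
The plan is to use the classical observation that an $L$-smooth function which is uniformly bounded in magnitude cannot have a large gradient at any point: a large gradient would force a large single-step decrease that is incompatible with the bound on the function values. The whole argument is a one-step descent estimate combined with the boundedness of the loss.

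First I would transfer Assumption~\ref{assu:BoundLoss} from the loss to the performative risk itself. Since ${\cal L}(\prm)=\EE_{Z\sim\Pi_{\prm}}[\ell(\prm;Z)]$, monotonicity of the expectation gives $|{\cal L}(\prm)|\leq \EE_{Z\sim\Pi_{\prm}}[|\ell(\prm;Z)|]\leq \G$ for every $\prm\in\RR^d$. In particular ${\cal L}$ takes values in $[-\G,\G]$, so the difference of its values at any two points is at most $2\G$.

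Next I would invoke the descent inequality that follows from the $L$-smoothness of Assumption~\ref{assu:Lip}: for all $\prm,\prm'\in\RR^d$,
\[
    {\cal L}(\prm')\leq {\cal L}(\prm)+\Pscal{\grd {\cal L}(\prm)}{\prm'-\prm}+\frac{L}{2}\norm{\prm'-\prm}^2.
\]
Specializing to the gradient step $\prm'=\prm-\frac{1}{L}\grd {\cal L}(\prm)$ collapses the right-hand side to ${\cal L}(\prm)-\frac{1}{2L}\norm{\grd {\cal L}(\prm)}^2$, whence
\[
    \frac{1}{2L}\norm{\grd {\cal L}(\prm)}^2\leq {\cal L}(\prm)-{\cal L}(\prm')\leq 2\G,
\]
using the two-sided bound $|{\cal L}|\leq\G$ in the last step. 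Rearranging and taking square roots yields $\norm{\grd {\cal L}(\prm)}\leq 2\sqrt{L\G}$, as claimed.

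There is no substantive obstacle here; the only subtleties worth flagging are that one must use the \emph{full} two-sided bound $|{\cal L}|\leq\G$ (boundedness from below alone would not suffice to control the drop ${\cal L}(\prm)-{\cal L}(\prm')$), and that the descent lemma must hold globally for the chosen step $\prm'$, which is exactly what the global Lipschitz-gradient condition in Assumption~\ref{assu:Lip} supplies.
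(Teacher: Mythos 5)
Your proof is correct, and it is the standard argument this corollary relies on: combine the two-sided bound $|{\cal L}(\prm)|\leq \G$ (inherited from Assumption~\ref{assu:BoundLoss} by taking expectations) with the descent lemma from Assumption~\ref{assu:Lip} evaluated at the step $\prm'=\prm-\frac{1}{L}\grd {\cal L}(\prm)$, giving $\frac{1}{2L}\norm{\grd {\cal L}(\prm)}^2\leq {\cal L}(\prm)-{\cal L}(\prm')\leq 2\G$. The paper omits its proof of this corollary, and your argument — including the two subtleties you flag, that the bound on ${\cal L}$ must be two-sided and that the smoothness must hold globally — is exactly the reasoning the omitted proof would contain.
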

\begin{proof}
Omitted.
\end{proof}

\begin{lemma}{\bf (Lipschitz Continuity of Decoupled Risk)}\label{lem:lip_decoupled_risk}
Under Assumption \ref{assu:Lip}, \ref{assu:BoundLoss} and \ref{assu:smooth_dist}, it holds that
\[
\left|\EE_{Z\sim \Pi_{\prm_2}}\left[\ell(\prm_1;Z)-\ell(\prm_2;Z)\right]\right|\leq 2(\G L_1+\sqrt{L\G})\norm{\prm_1-\prm_2}+\frac{L}{2}\norm{\prm_1-\prm_2}^2
\]
\end{lemma}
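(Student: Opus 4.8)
The plan is to bridge the decoupled risk and the genuine performative risk ${\cal L}(\prm) = \EE_{Z \sim \Pi_{\prm}}[\ell(\prm; Z)]$ by inserting ${\cal L}(\prm_1)$ as an intermediate term. Since $\EE_{Z \sim \Pi_{\prm_2}}[\ell(\prm_2; Z)] = {\cal L}(\prm_2)$ by definition, I would write
\[
\EE_{Z \sim \Pi_{\prm_2}}[\ell(\prm_1; Z) - \ell(\prm_2; Z)] = \big( \EE_{Z \sim \Pi_{\prm_2}}[\ell(\prm_1; Z)] - {\cal L}(\prm_1) \big) + \big( {\cal L}(\prm_1) - {\cal L}(\prm_2) \big),
\]
so that the first bracket isolates a pure \emph{distribution shift} (the fixed loss $\ell(\prm_1; \cdot)$ evaluated under $\Pi_{\prm_2}$ versus $\Pi_{\prm_1}$) and the second is a genuine increment of the performative risk controlled by \emph{smoothness}. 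The triangle inequality then reduces the task to bounding the two brackets separately.

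For the distribution-shift bracket, I would rewrite $\EE_{Z \sim \Pi_{\prm_2}}[\ell(\prm_1; Z)] - {\cal L}(\prm_1) = \EE_{Z \sim \Pi_{\prm_2}}[\ell(\prm_1; Z)] - \EE_{Z \sim \Pi_{\prm_1}}[\ell(\prm_1; Z)]$ as the difference of the expectation of one bounded function under two measures. Since $|\ell(\prm_1; \cdot)| \leq \G$ by Assumption \ref{assu:BoundLoss}, the elementary inequality $|\EE_\mu[f] - \EE_\nu[f]| \leq 2 \norm{f}_\infty \tv{\mu}{\nu}$ gives a bound of $2\G \tv{\Pi_{\prm_1}}{\Pi_{\prm_2}}$, which by the Lipschitz distribution map of Assumption \ref{assu:smooth_dist} is at most $2\G L_1 \norm{\prm_1 - \prm_2}$.

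For the smoothness bracket, I would invoke the $L$-smoothness of ${\cal L}$ from Assumption \ref{assu:Lip}, which yields $|{\cal L}(\prm_1) - {\cal L}(\prm_2) - \Pscal{\grd {\cal L}(\prm_2)}{\prm_1 - \prm_2}| \leq \tfrac{L}{2}\norm{\prm_1 - \prm_2}^2$ and hence, via Cauchy--Schwarz, $|{\cal L}(\prm_1) - {\cal L}(\prm_2)| \leq \norm{\grd {\cal L}(\prm_2)}\,\norm{\prm_1 - \prm_2} + \tfrac{L}{2}\norm{\prm_1 - \prm_2}^2$. The key ingredient here is the uniform gradient bound $\norm{\grd {\cal L}(\prm_2)} \leq 2\sqrt{L\G}$ provided by Corollary \ref{cor:bdd_grad}. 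Substituting this and summing the two brackets, the linear terms combine as $2\G L_1 + 2\sqrt{L\G} = 2(\G L_1 + \sqrt{L\G})$, matching the claimed bound exactly.

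The computation is otherwise routine; the one point that requires care --- rather than a genuine obstacle --- is to bound the performative-risk increment through Corollary \ref{cor:bdd_grad} instead of the cruder estimate $|{\cal L}(\prm_1) - {\cal L}(\prm_2)| \leq 2\G$. Only routing through the gradient bound produces the tight $\sqrt{L\G}$ coefficient on the linear term together with the separate quadratic correction $\tfrac{L}{2}\norm{\prm_1 - \prm_2}^2$ that the statement demands.
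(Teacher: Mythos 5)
Your proof is correct and follows essentially the same route as the paper's: the decomposition through ${\cal L}(\prm_1)$ is exactly the paper's split of the decoupled risk into $\left|{\cal L}(\prm_1,\prm_2)-{\cal L}(\prm_1,\prm_1)\right| + \left|{\cal L}(\prm_1)-{\cal L}(\prm_2)\right|$, with the first bracket handled by bounded loss plus the TV-Lipschitz distribution map and the second by the descent lemma plus the gradient bound of Corollary~\ref{cor:bdd_grad}. The constants combine identically, so this matches the paper's argument step for step.
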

\begin{proof}
Let ${\cal L}(\prm_1,\prm_2)\eqdef\EE_{Z\sim\Pi_{\prm_2}}\ell(\prm_1;Z)$ denote the decoupled performative risk, then we have
\begin{align*}
\text{LHS}
&=\left|{\cal L}(\prm_1,\prm_2)-{\cal L}(\prm_2, \prm_2)\right|
\\
&\leq \left|{\cal L}(\prm_1)-{\cal L}(\prm_2)\right| + \left|{\cal L}(\prm_1,\prm_2)-{\cal L}(\prm_1,\prm_1)\right|
\\
&\leq \left|{\cal L}(\prm_1)-{\cal L}(\prm_2)-\Pscal{\grd{\cal L}(\prm_2)}{\prm_1-\prm_2}\right|+\left|\Pscal{\grd{\cal L}(\prm_2)}{\prm_1-\prm_2}\right|
+\left|{\cal L}(\prm_1,\prm_1)-{\cal L}(\prm_1, \prm_2)\right|
\\
&\overset{(a)}{\leq}\frac{L}{2}\norm{\prm_1-\prm_2}^2+\left|\Pscal{\grd{\cal L}(\prm_2)}{\prm_1-\prm_2}\right|
+\left|{\cal L}(\prm_1,\prm_1)-{\cal L}(\prm_1, \prm_2)\right|
\\
&\overset{(b)}{\leq}\frac{L}{2}\norm{\prm_1-\prm_2}^2+2\sqrt{L\G}\norm{\prm_1-\prm_2}
+\left|{\cal L}(\prm_1,\prm_1)-{\cal L}(\prm_1, \prm_2)\right|
\\
&=\frac{L}{2}\norm{\prm_1-\prm_2}^2+2\sqrt{ L\G}\norm{\prm_1-\prm_2}
+\left|\int\ell(\prm_1;z)\left(\Pi_{\prm_1}(z)-\Pi_{\prm_2}(z)\right)dz\right|
\\
&\overset{(c)}{\leq}\frac{L}{2}\norm{\prm_1-\prm_2}^2+2\sqrt{L\G}\norm{\prm_1-\prm_2}
+2\G\tv{\Pi_{\prm_1}}{\Pi_{\prm_2}}
\\
&\leq\frac{L}{2}\norm{\prm_1-\prm_2}^2+2\sqrt{L\G}\norm{\prm_1-\prm_2}
+2\G L_1\norm{\prm_1-\prm_2}
\\
&=2\left(\sqrt{L\G}
+\G L_1\right)\norm{\prm_1-\prm_2}+\frac{L}{2}\norm{\prm_1-\prm_2}^2
\end{align*}
where we use Assumption \ref{assu:Lip} in inequality (a), Corollary \ref{cor:bdd_grad} in inequality (b), Assumption \ref{assu:BoundLoss} in inequality (c), and Assumption \ref{assu:smooth_dist} in the last inequality.
\end{proof}

\begin{lemma}\label{lem:tv_summation_bound}
    Under Assumption \ref{assu:smooth_kernel}, it holds that for all $0\leq\ell\leq m$, $m\geq 1$
    \[
    \tv{\PP(Z_{k}^{(\ell+1)}\in \cdot|Z_{k}^{(0)})}{\PP(\tilde{Z}_{k}^{(\ell +1)}\in \cdot|Z_{k}^{(0)})}
    \leq 
    L_2 \norm{\cprm_{k}^{(\ell)} -\cprm_k} + \tv{\PP(Z_{k}^{(\ell)} \in \cdot|Z_{k}^{(0)})}{\PP(\Tilde{Z}_{k}^{(\ell)}\in \cdot|Z_{k}^{(0)})}
    \]
Unfold above recursion leads to the following inequality, 
\[
    \tv{\PP(Z_{k}^{(m)}\in \cdot|Z_{k}^{(0)})}{\PP(\tilde{Z}_{k}^{(m)}\in \cdot|Z_{k}^{(0)})}
    \leq L_2 \sum_{\ell=1}^{m-1} \norm{\cprm_{k}^{(\ell)} -\cprm_{k} }, \quad \forall m\geq 1.
\]
\end{lemma}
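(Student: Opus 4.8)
The plan is to view the two conditional laws as measures transported by Markov kernels and to compare them one step at a time, so that the per-step discrepancies telescope. Write $\mu_\ell \eqdef \PP(Z_{k}^{(\ell)}\in\cdot\mid Z_{k}^{(0)})$ and $\nu_\ell \eqdef \PP(\tilde{Z}_{k}^{(\ell)}\in\cdot\mid Z_{k}^{(0)})$ for the step-$\ell$ marginals of the real chain and the reference chain, and let $\mu\TT(\cdot)\eqdef\int\TT(z,\cdot)\,\mu(\diff z)$ denote the action of a kernel on a measure. By construction the real chain advances from $\mu_\ell$ using the time-varying kernel deployed at that sub-iteration, $\mu_{\ell+1}=\mu_\ell\,\TT_{\cprm_{k}^{(\ell)}}$, whereas the reference chain always applies the frozen kernel, $\nu_{\ell+1}=\nu_\ell\,\TT_{\cprm_{k}}$. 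Since both chains are launched from the common state $Z_{k}^{(0)}$, we have $\mu_0=\nu_0$, hence $\tv{\mu_0}{\nu_0}=0$; this is the base case for the unrolling.

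For the one-step recursion I would insert the intermediate measure $\nu_\ell\,\TT_{\cprm_{k}^{(\ell)}}$ and apply the triangle inequality,
\[
\tv{\mu_{\ell+1}}{\nu_{\ell+1}}
\le \tv{\mu_\ell\,\TT_{\cprm_{k}^{(\ell)}}}{\nu_\ell\,\TT_{\cprm_{k}^{(\ell)}}}
+ \tv{\nu_\ell\,\TT_{\cprm_{k}^{(\ell)}}}{\nu_\ell\,\TT_{\cprm_{k}}}.
\]
The first term is controlled by the non-expansiveness of total variation under a shared Markov kernel, $\tv{\mu_\ell\TT}{\nu_\ell\TT}\le\tv{\mu_\ell}{\nu_\ell}$, which I would justify directly: writing the signed measure $\mu_\ell-\nu_\ell$ through its Jordan decomposition and using $0\le\TT(z,A)\le1$ shows that $\bigl|\int\TT(z,A)\,(\mu_\ell-\nu_\ell)(\diff z)\bigr|\le\tv{\mu_\ell}{\nu_\ell}$ for every event $A$. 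The second term isolates the kernel mismatch evaluated at the same input law: for any event $A$,
\[
\Bigl|\!\int\!\bigl(\TT_{\cprm_{k}^{(\ell)}}(z,A)-\TT_{\cprm_{k}}(z,A)\bigr)\nu_\ell(\diff z)\Bigr|
\le\int\tv{\TT_{\cprm_{k}^{(\ell)}}(z,\cdot)}{\TT_{\cprm_{k}}(z,\cdot)}\,\nu_\ell(\diff z)
\le L_2\,\norm{\cprm_{k}^{(\ell)}-\cprm_{k}},
\]
where the last step is the uniform-in-$z$ bound of Assumption~\ref{assu:smooth_kernel}. Taking the supremum over $A$ and combining the two estimates gives exactly the stated recursion.

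Finally I would unfold the recursion from $\tv{\mu_0}{\nu_0}=0$, summing the increments to get $\tv{\mu_m}{\nu_m}\le L_2\sum_\ell\norm{\cprm_{k}^{(\ell)}-\cprm_{k}}$; because $\prm_{k}^{(1)}=\prm_{k}$ forces $\cprm_{k}^{(1)}=\cprm_{k}$, the displacement vanishes at the first deployed index, leaving the advertised range $\sum_{\ell=1}^{m-1}$ (the precise endpoints being a matter of deployment-timing bookkeeping). The only genuinely delicate ingredient is the contraction step $\tv{\mu_\ell\TT}{\nu_\ell\TT}\le\tv{\mu_\ell}{\nu_\ell}$: it is what prevents the law mismatch from compounding across steps, and it is precisely the reason to insert $\nu_\ell\,\TT_{\cprm_{k}^{(\ell)}}$ rather than $\mu_\ell\,\TT_{\cprm_{k}}$ --- this choice cleanly separates the accumulated law mismatch (absorbed by the contraction) from the single-step kernel mismatch (absorbed by Assumption~\ref{assu:smooth_kernel}), so that the two error sources add linearly instead of interacting.
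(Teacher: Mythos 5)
Your proof is correct and takes essentially the same route as the paper's: a one-step triangle inequality that splits $\tv{\mu_{\ell+1}}{\nu_{\ell+1}}$ into a kernel-mismatch term controlled by Assumption~\ref{assu:smooth_kernel} and a term absorbed by the non-expansiveness (data-processing) property of total variation under a shared Markov kernel, followed by unrolling the recursion from the common initial state $Z_k^{(0)}$. The only cosmetic difference is the pivot: you insert $\nu_\ell\,\TT_{\cprm_k^{(\ell)}}$ whereas the paper inserts $\mu_\ell\,\TT_{\cprm_k}$ (carried out explicitly in density/integral form), which swaps the roles of the two terms but yields the identical recursion and the same endpoint bookkeeping, including the observation that the first transition contributes nothing because both chains use the same kernel there.
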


\begin{proof}
Recall the notation $\cprm_{k}^{(\ell)}=\cprm_{k}^{(\ell)}+\delta_{k}u_{k}, \cprm_{k} = \cprm_{k}+\delta_{k}u_{k}$, and the fact that $Z_{k}=Z_{k}^{(0)}=\Tilde{Z}_{k}^{(0)}$, we have
\begin{align*}
    2\cdot\text{LHS} &= \int_{{\sf Z}} \left| \PP(Z_{k}^{(\ell+1)}=z|Z_{k}^{(0)}) - \PP(\Tilde{Z}_{k}^{(\ell+1)}=z|Z_{k}^{(0)})\right| \diff z
    \\
    &= \int_{{\sf Z}} \left| \int_{{\sf Z}}
        \PP({Z}_{k}^{(\ell)}=z^\prime, Z_{k}^{(\ell+1)} = z|Z_{k}^{(0)}) - \PP(\Tilde{Z}_{k}^{(\ell)}=z^\prime, \Tilde{Z}_{k}^{(\ell+1)} = z|Z_{k}^{(0)})
    \diff z^\prime \right| \diff z
    \\
    &\leq \int_{\sf Z}\int_{\sf Z}  \left|
        \TT_{\cprm_k^{(\ell)}}(z^\prime, z) \PP(Z_{k}^{(\ell)} = z^\prime|Z_{k}^{(0)}) -  \TT_{\cprm_{k}}(z^\prime, z) \PP(\Tilde{Z}_{k}^{(\ell)} = z^\prime|Z_{k}^{(0)})
    \right| \diff z^\prime \diff z
    \\
    &\leq \int_{\sf Z}\int_{\sf Z} \left|
        \TT_{\cprm_{k}^{(l\ell}}(z^\prime, z) \PP(Z_{k}^{(\ell)} = z^\prime|Z_{k}^{(0)}) -  \TT_{\cprm_{k}}(z^\prime, z) \PP(Z_{k}^{(\ell)} = z^\prime|Z_{k}^{(0)})
    \right| \diff z^\prime \diff z
    \\
        &\quad +
        \int_{\sf Z}\int_{\sf Z}  \left|
            \TT_{\cprm_k}(z^\prime, z) \PP(Z_{k}^{(\ell)} = z^\prime|Z_{k}^{(0)}) -  \TT_{\cprm_{k}}(z^\prime, z) \PP(\Tilde{Z}_{k}^{(\ell)} = z^\prime|Z_{k}^{(0)})
        \right| \diff z^\prime \diff z
    \\
    &\overset{(a)}{=} 
    \int_{\sf Z} \PP(Z_{k}^{\ell}=z^\prime|Z_{k}^{(0)}) \int_{\sf Z} \left| \TT_{\cprm_k}(z^\prime, z) - \TT_{\cprm_{k}^{(\ell)}}(z^\prime, z)\right| \diff z\diff z^\prime
    \\
    &\quad + 
    \int_{\sf Z}\left[\int_{\sf Z} \TT_{\cprm_k}(z^\prime, z)\diff z\right] 
    \left| \PP(Z_{k}^{(\ell)}=z^\prime|Z_{k}^{(0)}) - \PP(\Tilde{Z}_{k}^{(\ell)} = z^\prime|Z_{k}^{(0)})\right| \diff z^\prime
    \\
    &\leq  \int_{\sf Z} \PP(Z_{k}=z^\prime|Z_{k}^{(0)}) \cdot 2 \tv{\TT_{\cprm_{k}}(z^\prime, \cdot)}{\TT_{\cprm_{k}}(z^\prime, \cdot)} \diff z^\prime + 2\tv{\PP(Z_{k}^{(\ell)} \in \cdot|Z_{k}^{(0)})}{\PP(\Tilde{Z}_{k}^{(\ell)}\in \cdot|Z_{k}^{(0)})}
    \\
    &\leq 2\int_{\sf Z} \PP(Z_{k}^{(\ell)} =z^\prime|Z_{k}^{(0)}) \diff z^\prime \cdot L_2 \norm{\cprm_{k}^{(\ell)} - \cprm_{k}} 
    + 2\tv{\PP(Z_{k}^{(\ell)} \in \cdot|Z_{k}^{(0)})}{\PP(\Tilde{Z}_{k}^{(\ell)} \in \cdot|Z_{k}^{(0)})}
    \\
    &= 2\left[
        L_2  \norm{\cprm_{k}^{(\ell)} -\cprm_k} + \tv{\PP(Z_{k}^{(\ell)} \in \cdot|Z_{k}^{(0)})}{\PP(\Tilde{Z}_{k}^{(\ell)}\in \cdot|Z_{k}^{(0)})}
    \right]=2\cdot\text{RHS}
\end{align*}
where inequality (a) holds due to the (absolutely) integrable condition (which automatically holds for probability density functions and kernels), and Assumption \ref{assu:smooth_kernel} is used in the last inequality.
\end{proof}

\begin{assumption} \label{assu:alternative}
Assume that there exists constants $L_0, L_1$ such that:
    \begin{enumerate}[label=(\roman*)]
    \item \label{assu:replace1} $|\ell(\prm;z)-\ell(\prm;z^\prime)|\leq L_0\norm{z-z^{\prime}}$ for any $ \prm \in \RR^d$, $z,z^{\prime} \in {\sf Z}$,
    \item \label{assu:replace2}  $W_1(\Pi_{\prm},\Pi_{\prm^{\prime}})\leq L_1\norm{\prm-\prm^{\prime}}$ for any $\prm , \prm^{\prime} \in \RR^d$, where $W_1(\Pi,\Pi')$ denotes the Wasserstein-1 distance between the distributions $\Pi, \Pi'$.
\end{enumerate}
\end{assumption}

We observe that a similar result to Lemma~\ref{lem:lip_decoupled_risk} can be proven by replacing Assumption~\ref{assu:smooth_dist} with Assumption~\ref{assu:alternative}:
\begin{lemma}{\bf (Lipschitz Continuity of Decoupled Risk, Alternative Condition based on Wasserstein-1 distance.)}\label{lem:lip_decoupled_risk_wasserstein}
Under Assumption \ref{assu:Lip}, \ref{assu:BoundLoss}, \ref{assu:alternative}.
Then, for any $\prm_1, \prm_2 \in \RR^d$, it holds that
\[
\left|\EE_{Z\sim \Pi_{\prm_2}}\left[\ell(\prm_1;Z)-\ell(\prm_2;Z)\right]\right|\leq (L_0 L_1+2\sqrt{L\G})\norm{\prm_1-\prm_2}+\frac{L}{2}\norm{\prm_1-\prm_2}^2.
\]
\end{lemma}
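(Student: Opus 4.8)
The plan is to mirror the proof of Lemma~\ref{lem:lip_decoupled_risk} almost verbatim, isolating the single place where Assumption~\ref{assu:smooth_dist} enters and replacing the total-variation estimate there by a Wasserstein-1 estimate via Kantorovich--Rubinstein duality. First I would introduce the decoupled risk ${\cal L}(\prm_1,\prm_2)\eqdef\EE_{Z\sim\Pi_{\prm_2}}[\ell(\prm_1;Z)]$ and write the quantity of interest as $|{\cal L}(\prm_1,\prm_2)-{\cal L}(\prm_2,\prm_2)|$. Inserting $\pm{\cal L}(\prm_1,\prm_1)$ and applying the triangle inequality splits this into a diagonal term $|{\cal L}(\prm_1)-{\cal L}(\prm_2)|$ (recall ${\cal L}(\prm)={\cal L}(\prm,\prm)$) and an off-diagonal distribution-shift term $|{\cal L}(\prm_1,\prm_1)-{\cal L}(\prm_1,\prm_2)|$.

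For the diagonal term I would reuse the argument from Lemma~\ref{lem:lip_decoupled_risk} unchanged: adding and subtracting $\Pscal{\grd{\cal L}(\prm_2)}{\prm_1-\prm_2}$, the smoothness Assumption~\ref{assu:Lip} bounds the second-order remainder by $\tfrac{L}{2}\norm{\prm_1-\prm_2}^2$, while Corollary~\ref{cor:bdd_grad} gives $\norm{\grd{\cal L}(\prm_2)}\leq 2\sqrt{L\G}$ and hence bounds the linear part by $2\sqrt{L\G}\norm{\prm_1-\prm_2}$ via Cauchy--Schwarz. This contributes $2\sqrt{L\G}\norm{\prm_1-\prm_2}+\tfrac{L}{2}\norm{\prm_1-\prm_2}^2$, exactly as in the original. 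These steps invoke only Assumptions~\ref{assu:Lip} and~\ref{assu:BoundLoss}, neither of which is altered by the new hypotheses.

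The only genuinely new step is the off-diagonal term. Observe that ${\cal L}(\prm_1,\prm_1)-{\cal L}(\prm_1,\prm_2)=\EE_{Z\sim\Pi_{\prm_1}}[\ell(\prm_1;Z)]-\EE_{Z\sim\Pi_{\prm_2}}[\ell(\prm_1;Z)]$ is the difference of integrals of the \emph{fixed} map $z\mapsto\ell(\prm_1;z)$ against the two measures $\Pi_{\prm_1}$ and $\Pi_{\prm_2}$. By Assumption~\ref{assu:alternative}\ref{assu:replace1}, this map is $L_0$-Lipschitz on ${\sf Z}$, so Kantorovich--Rubinstein duality yields $|{\cal L}(\prm_1,\prm_1)-{\cal L}(\prm_1,\prm_2)|\leq L_0\, W_1(\Pi_{\prm_1},\Pi_{\prm_2})$, and Assumption~\ref{assu:alternative}\ref{assu:replace2} then bounds this by $L_0 L_1\norm{\prm_1-\prm_2}$. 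Summing the two contributions gives $(L_0 L_1+2\sqrt{L\G})\norm{\prm_1-\prm_2}+\tfrac{L}{2}\norm{\prm_1-\prm_2}^2$, which is precisely the claim.

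I do not anticipate a hard obstacle here, since the substitution is clean; the one point requiring care is that the Lipschitz constant in~\ref{assu:replace1} must be measured in the same ground metric that defines $W_1$ in~\ref{assu:replace2}, so that the duality bound $|\int f\,d\mu-\int f\,d\nu|\leq L_0 W_1(\mu,\nu)$ applies directly. This is automatic when both refer to the Euclidean norm on ${\sf Z}$. Note also the mild structural difference from the TV version: Assumption~\ref{assu:BoundLoss} (boundedness of $\ell$) is no longer needed to control the off-diagonal term, which is now handled purely through Lipschitzness in the sample argument.
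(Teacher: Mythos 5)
Your proof is correct and follows essentially the same route as the paper's: the identical decomposition via $\pm{\cal L}(\prm_1,\prm_1)$, the same treatment of the diagonal term using Assumption~\ref{assu:Lip} and Corollary~\ref{cor:bdd_grad}, and the same Kantorovich--Rubinstein step for the distribution-shift term (the paper invokes this duality through Lemma D.4 of \citet{perdomo2020performative} rather than naming it directly). Your closing observations — that the ground metrics in Assumption~\ref{assu:alternative} must agree, and that boundedness of $\ell$ is no longer needed for the off-diagonal term — are accurate refinements, not gaps.
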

\begin{proof}
Observe that
    \begin{align*}
    \text{LHS}
&=\left|{\cal L}(\prm_1,\prm_2)-{\cal L}(\prm_2, \prm_2)\right|
\\
&\leq \left|{\cal L}(\prm_1)-{\cal L}(\prm_2)\right| + \left|{\cal L}(\prm_1,\prm_2)-{\cal L}(\prm_1,\prm_1)\right|
\\
&\leq \left|{\cal L}(\prm_1)-{\cal L}(\prm_2)-\Pscal{\grd{\cal L}(\prm_2)}{\prm_1-\prm_2}\right|+\left|\Pscal{\grd{\cal L}(\prm_2)}{\prm_1-\prm_2}\right|
+\left|{\cal L}(\prm_1,\prm_1)-{\cal L}(\prm_1, \prm_2)\right|
\\
&\leq\frac{L}{2}\norm{\prm_1-\prm_2}^2+\left|\Pscal{\grd{\cal L}(\prm_2)}{\prm_1-\prm_2}\right|
+\left|{\cal L}(\prm_1,\prm_1)-{\cal L}(\prm_1, \prm_2)\right|
\\
&\leq \frac{L}{2}\norm{\prm_1-\prm_2}^2+2\sqrt{L\G}\norm{\prm_1-\prm_2}
+\left|{\cal L}(\prm_1,\prm_1)-{\cal L}(\prm_1, \prm_2)\right|
\\
&=\frac{L}{2}\norm{\prm_1-\prm_2}^2+2\sqrt{ L\G}\norm{\prm_1-\prm_2}
+\left|\EE_{Z\sim\Pi_{\prm_1},Z^{\prime}\sim\Pi_{\prm_2}}[\ell(\prm_1;Z)-\ell(\prm_1;Z^{\prime})]\right|
\\
&\overset{(a)}{\leq}\frac{L}{2}\norm{\prm_1-\prm_2}^2+2\sqrt{L\G}\norm{\prm_1-\prm_2}
+L_0 W_1(\Pi_{\prm_1},\Pi_{\prm_2})
\\
&\overset{(b)}{\leq}\frac{L}{2}\norm{\prm_1-\prm_2}^2+2\sqrt{L\G}\norm{\prm_1-\prm_2}
+L_0 L_1\norm{\prm_1-\prm_2}
\\
&=\left(L_0 L_1+2\sqrt{L\G}\right)\norm{\prm_1-\prm_2}+\frac{L}{2}\norm{\prm_1-\prm_2}^2
    \end{align*}
    where the inequality (a) is due to \cite[Lemma D.4]{perdomo2020performative} and the alternative assumption \ref{assu:replace1}, the inequality (b) is due to the alternative assumption \ref{assu:replace2} (i.e., Lipschitz condition on distribution map $\Pi_{\prm}$ in Wasserstein-1 metric).
\end{proof}
Consequently, the conclusions in Theorem~\ref{thm1} hold (with slightly different constants) when Assumption~\ref{assu:smooth_dist} is replaced by Assumption~\ref{assu:alternative}. We observe that the former assumption is only used in ensuring the bound in Lemma~\ref{lem:lip_decoupled_risk}; cf.~the proof of Lemma~\ref{lem:bound_term_2}.

\end{document}